
\documentclass[12pt,a4paper,utf8x]{article}
\usepackage{color}
\usepackage{fancyhdr}
\pagestyle{headings}
\usepackage{tikz}
\usepackage[vmargin=70pt, hmargin=35pt]{geometry}
\usetikzlibrary{decorations.pathmorphing}
\usepackage[T1]{fontenc}
\usepackage{ucs}
\usepackage[utf8x]{inputenc}
\usepackage{amsfonts}
\usepackage{listings}
\usepackage{amsmath}
\usepackage[mathscr]{euscript}
\usepackage{amssymb}
\usepackage{amsfonts}
\usepackage{dsfont}
\usepackage{amsthm}
\usepackage{fancybox}
\usepackage{enumitem}
\usepackage{dsfont}
\usepackage{algpseudocode}
\usepackage{algorithm}
\usepackage{graphicx}
\usepackage{caption}
\usepackage{subcaption}
\usepackage{float}
\usepackage{authblk}
\usepackage{relsize}
\usepackage{setspace}
\usepackage{parskip}

\DeclareMathOperator{\LL}{L}
\DeclareMathOperator{\Tr}{Tr}

\newcommand*{\DomA}{\mathcal{D}(A)}


\setlength{\parindent}{0pt}

\everydisplay={\aftergroup\specindent}
\def\specindent{\global\hangindent=2em \global\hangafter=-1 \global\prevgraf=0 }


\makeatletter
\newcommand*{\inlineequation}[2][]{%
  \begingroup
    \refstepcounter{equation}%
    \ifx\\#1\\%
    \else
      \label{#1}%
    \fi
    \relpenalty=100 %
    \binoppenalty= 100 %
    \ensuremath{%
      #2%
    }%
    ~\@eqnnum
  \endgroup
}
\makeatother

\usepackage{hyperref}

\newenvironment{alphafootnotes}
  {\par\edef\savedfootnotenumber{\number\value{footnote}}
   
   \setcounter{footnote}{0}}
  {\par\setcounter{footnote}{\savedfootnotenumber}}


\newtheorem{mytheo}{Theorem}[section]
\newtheorem{myprop}{Proposition}[section]
\newtheorem{lemme}{Lemma}[section]

\newtheorem{remark}{Remark}[section]



\date{}

\begin{document}

\title{Recursive computation of invariant distributions of Feller processes}
\author[1]{Gilles Pag\`es}
\author[1]{Cl\'ement Rey}
\affil[1]{Universit\'e Pierre et Marie Curie, LPMA, 4 Place Jussieu, 75005 Paris, France}
\maketitle

\begin{alphafootnotes}
\footnote{e-mails : gilles.pages@upmc.fr, clement.rey@upmc.fr This research benefited from the support of the "Chaire Risques Financiers''.}
\end{alphafootnotes}

\abstract{ \noindent This paper provides a general and abstract approach to compute invariant distributions for Feller processes. More precisely, we show that the recursive algorithm presented in \cite{Lamberton_Pages_2002} and based on simulation algorithms of stochastic schemes with decreasing steps can be used to build invariant measures for general Feller processes. We also propose various applications: Approximation of Markov Brownian diffusion stationary regimes with Milstein or Euler scheme and approximation of Markov switching Brownian diffusion stationary regimes using Euler scheme.
}

\noindent {\bf Keywords :} Ergodic theory, Markov processes, Invariant measures, Limit theorem, Stochastic approximation. \\
{\bf AMS MSC 2010:} 60G10, 47A35, 60F05, 60J25, 60J35, 65C20.


\section{Introduction}

In this paper, we propose a method for the recursive computation of the invariant distribution (denoted $\nu$) of a Feller processes $(X_t)_{t \geqslant 0}$. The starting idea is to consider a non-homogeneous discrete Markov process which can be simulated using a family of transitions kernels $(Q_{\gamma})_{\gamma>0}$ and approximating $(X_t)_{t \geqslant 0}$ in a sense made precise further on. \\
As suggested by the pointwise Birkhoff ergodic theorem, we then show that some sequence $( \nu_n)_{n \in \mathbb{N}^{\ast}}$ of random empirical measures $a.s.$ weakly converges toward $\nu$ under some appropriate mean-reverting and moment assumptions. An abstract framework is developed which, among others, enables to extend this convergence to the $\mbox{L}^p$-Wasserstein distance. For a given $f$, $\nu_n(f)$ can be recursively defined making its computation straightforward.\\

\noindent Invariant distributions are crucial in the study of the long term behavior of stochastic differential systems. We invite the reader to refer to \cite{Hasminskii_1980} and \cite{Ethier_Kurtz_1986} for an overview of the subject. The computation of invariant distributions for stochastic systems has already been widely explored in the literature. In \cite{Soize_1994}, explicit exact expressions of the invariant density distribution for some solutions of Stochastic Differential Equations are given.\\

However, in many cases there is no explicit formula for $\nu$. A first approach consists in studying the convergence, as $t$ tends to infinity, of the semigroup $(P_t)_{t \geqslant 0}$ of the Markov process $(X_t)_ {t\geqslant 0}$ with infinitesimal generator $A$ towards the invariant measure $\nu$. This is done $e.g.$ in \cite{Ganidis_Roynette_Simonot_1999} for the total variation topology which is thus adapted when the simulation of $P_T$ is possible for $T$ large enough. \\

Whenever $(X_t)_{t \geqslant 0}$ can be simulated, we can use a Monte Carlo method to estimate $(P_t)_{t \geqslant 0}$, $i.e.$ $\mathbb{E}[f(X_t)]$, $t\geqslant 0$, producing a second term in the error analysis. When $(X_t)_{t \geqslant 0}$ cannot be simulated at a reasonable cost, a solution consists in simulating an approximation of $(X_t)_{t \geqslant 0}$, using a numerical scheme $(\overline{X}^{\gamma}_{\Gamma_n})_{n\in \mathbb{N}}$ built with transition functions $( \mathscr{Q}_{\gamma_n})_{n \in\mathbb{N}^{\ast}}$ (given a step sequence $(\gamma_n)_{n \in \mathbb{N}}$, $\Gamma_0=0$ and $\Gamma_n=\gamma_1+..+\gamma_n$). If the process $(\overline{X}^{\gamma}_{\Gamma_n})_{n\in \mathbb{N}}$ weakly converges towards $(X_t)_{t \geqslant 0}$, a natural construction relies on numerical homogeneous schemes ($(\gamma_n)_{n \in \mathbb{N}}$ is constant, $\gamma_n=\gamma_1>0$, for every $n \in \mathbb{N}^{\ast}$). This approach induces two more terms to control in the approximation of $\nu$ in addition to the error between $P_T$ and $\nu$ for a large enough fixed $T>0$, such that there exists $n(T) \in\mathbb{N}^{\ast}$,with $T=n(T) \gamma_1$: The first one is due to the weak approximation of $\mathbb{E}[f(X_T)]$ by $\mathbb{E}[f(\overline{X}^{\gamma_1}_T)]$ and the second one is due to the Monte Carlo error resulting from the computation of $\mathbb{E}[f(\overline{X}^{\gamma_1}_T]$.\\
%
%
%

\noindent Such an approach does not benefit from the ergodic feature of $(X_t)_{t \geqslant 0}$. In fact, as investigated in \cite{Talay_1990} for Brownian diffusions, the ergodic (or positive recurrence) property of $(X_t)_{t \geqslant 0}$ is also satisfied by its approximation $(\overline{X}^{\gamma}_{\Gamma_n})_{n\in \mathbb{N}}$ at least for small enough time step $\gamma_n= \gamma_1, n \in \mathbb{N}^{\ast}$. Then $(\overline{X}^{\gamma_1}_{\Gamma_n})_{n\in \mathbb{N}}$ has an invariant distribution $\nu^{\gamma_1}$ (supposed to be unique for simplicity) and the sequence of empirical measures 
 \begin{align*}
 \nu^{\gamma_1}_n(dx)=\frac{1}{ \Gamma_n} \sum_{k=1}^n \gamma_1 \delta_{\overline{X}^{\gamma_1}_{\Gamma_{k-1}}}(dx), \qquad \Gamma_n = n \gamma_1.
 \end{align*}
almost surely weakly converges to $\nu^{\gamma_1}$. With this last result makes it is possible to compute by simulation, arbitrarily accurate approximations of $\nu^{\gamma_1}(f)$ using only one simulated path of $(\overline{X}^{\gamma}_{\Gamma_n})_{n\in \mathbb{N}}$. It is an ergodic - or Langevin - simulation of $\nu^{\gamma_1}(f)$. However, it remains to establish at least that $\nu^{\gamma_1}(f)$ converges to $\nu(f)$ when $\gamma_1$ converges to zero and, if possible, at which rate.
%
%
 Another approach was proposed in \cite{Basak_Hu_Wei_1997}, still for Brownian diffusions, which avoids the asymptotic analysis between $\nu^{\gamma_1}$ and $\nu$. The authors directly prove that the discrete time Markov process $(\overline{X}^{\gamma}_{\Gamma_n})_{n\in \mathbb{N}}$, with step sequence $\gamma=(\gamma_n)_{n \in \mathbb{N}}$ vanishing to 0, weakly converges toward $\nu$. Therefore, the resulting error is made of two terms. The first one is due to this weak convergence and the second one to the Monte Carlo error involved in the computation of the law of $\overline{X}^{\gamma}_{\Gamma_n}$, for $n$ large enough. The reader may notice that in mentioned approaches, strong ergodicity assumptions are required for the process with infinitesimal generator $A$. \\
\noindent In \cite{Lamberton_Pages_2002}, these two ideas are combined to design a Langevin Euler Monte Carlo recursive algorithm with decreasing steps which $a.s.$ weakly converges to the right target $\nu$. This paper treats the case where $ (\overline{X}^{\gamma}_{\Gamma_n})_{n\in \mathbb{N}}$ is a (inhomogeneous) Euler scheme with decreasing steps associated to a strongly mean reverting Brownian diffusion process. The sequence $(\nu^{\gamma}_n)_{n \in \mathbb{N}^{\ast}}$ is defined as the weighted empirical measures of the path of $ (\overline{X}^{\gamma}_{\Gamma_n})_{n\in \mathbb{N}}$ (which is the procedure that is used in every work we mention from now on and which is also the one we use in this paper). In particular, the $a.s.$ weak convergence of
 \begin{align}
 \label{eq:def_weight_emp_meas_intro}
 \nu^{\gamma}_n(dx)=\frac{1}{\Gamma_n} \sum_{k=1}^n \gamma_k \delta_{\overline{X}^{\gamma}_{\Gamma_{k-1}}}(dx), \qquad \Gamma_n=\sum\limits_{k=1}^n \gamma_k,
 \end{align}
toward the (non-empty) set $\mathcal{V}$ of the invariant distributions of the underlying Brownian diffusion is established.
 Moreover, when the invariant measure $\nu$ is unique, it is proved that $\lim\limits_{n \to + \infty} \nu^{\gamma}_n f=\nu f \; a.s.$ for a larger class of test functions than $\mathcal{C}^0$ which contains $\nu-a.s.$ continuous functions with polynomial growth $i.e.$ convergence for the Wasserstein distance. In the spirit of \cite{Bhattacharya_1982} for the empirical measure of the underlying diffusion, they also obtained rates and limit gaussian laws for the convergence of $(\nu^{\gamma}_n(f))_{n \in \mathbb{N}^{\ast}}$ for test functions $f$ which can be written $f= A \varphi$. Note that, this approach does not require that the invariant measure $\nu$ is unique by contrast with the results obtained in \cite{Talay_1990} and \cite{Basak_Hu_Wei_1997} or in \cite{Durmus_Moulines_2015} where the authors study of the total variation convergence for the Euler scheme with decreasing steps of the over-damped Langevin diffusion under strong ergodicity assumptions. for instance. In this case, it is established that $a.s.$, every weak limiting distribution of $(\nu^{\gamma}_n)_{n \in \mathbb{N}^{\ast}}$ is an invariant distribution for the Brownian diffusion. \\
 This first paper gave rise to many generalizations and extensions. In \cite{Lamberton_Pages_2003}, the initial result is extended to the case of Euler scheme of Brownian diffusions with weakly mean reverting properties. Thereafter, in \cite{Lemaire_thesis_2005}, the class of test functions for which we have $\lim\limits_{n \to + \infty} \nu^{\gamma}_n f =\nu f \; a.s.$ (when the invariant distribution is unique) is extended to include functions with exponential growth. Finally, in \cite{Panloup_2008}, the results concerning the polynomial case are shown to hold for the computation of invariant measures for weakly mean reverting Levy driven diffusion processes, still using the algorithm from \cite{Lamberton_Pages_2002}. This extension encourages relevant perspectives concerning not only the approximation of mean reverting Brownian diffusion stationary regimes but also to treat a larger class of processes. For a more complete overview of the studies concerning (\ref{eq:def_weight_emp_meas_intro}) for the Euler scheme, the reader can also refer to \cite{Pages_2001_ergo}, \cite{Lemaire_2007}, \cite{Panloup_2008_rate}, \cite{Pages_Panloup_2009}, \cite{Pages_Panloup_2012} or \cite{Mei_Yin_2015}.
 %

\medskip
\noindent In this paper, we extend those existing results and show that the Langevin Euler Monte Carlo algorithm presented in \cite{Lamberton_Pages_2002} and generalized to the case where $( \mathscr{Q}_{\gamma})_{\gamma >0}$ is not specified explicitly, enables to approximate invariant, not necessarily unique, measures for Feller processes. \\

In a first step, we present an abstract framework adapted to the computation of invariant distributions for Feller processes under general mean reverting assumptions (including weakly mean reverting assumptions). Then, we establish $a.s$ weak convergence of $(\nu^{\gamma}_n)_{n \in \mathbb{N}^{\ast}}$. Moreover, when the invariant distribution $\nu$ is unique we obtain $\lim\limits_{n \to + \infty} \nu^{\gamma}_n f =\nu f \; a.s.$ for a generic class of continuous test functions $f$ (adapted among other to polynomial and exponential test functions $f$). \\
Then in a second step, we apply this abstract results to concrete cases and obtain original results. Notice that the existing results mentioned above can be recovered from our abstract framework. We begin by providing Wasserstein convergence results concerning Euler and Milstein schemes of Brownian diffusion processes in a weakly mean reverting setting. 
Then, we propose a detailed application concerning the Euler scheme of a Markov Switching diffusion for test functions $f$ with polynomial growth (Wasserstein convergence) or exponential growth. Here, we extend the convergence results from \cite{Mei_Yin_2015} where the authors adapted the algorithm from \cite{Lamberton_Pages_2002} under strong ergodicity assumptions for the Wasserstein convergence. 

\section{Convergence to invariant distributions - A general approach}
\label{section:convergence_inv_distrib_gnl}
%
In this section, we show that the empirical measures defined in the same way as in (\ref{eq:def_weight_emp_meas_intro}) and built from an approximation $(\overline{X}^{\gamma}_{\Gamma_n})_{n\in \mathbb{N}}$ of a Feller process $(X_t)_{t \geqslant 0}$ (which are not specified explicitly), where the step sequence $(\gamma_n)_{n \in \mathbb{N}^{\ast}} \underset{n \to + \infty}{\to}0$, $a.s.$ weakly converges the set $\mathcal{V}$, of the invariant distributions of $(X_t)_{t \geqslant 0}$. To this end, we will provide as weak as possible mean reverting assumptions on the pseudo-generator of $(\overline{X}^{\gamma}_{\Gamma_n})_{n\in \mathbb{N}}$ on the one hand and appropriate rate conditions on the step sequence $(\gamma_n)_{n \in \mathbb{N}^{\ast}}$ on the other hand.
\subsection{Presentation of the abstract framework}
%
\subsubsection{Notations}
Let $(E,\vert . \vert)$ be a locally compact separable metric space, we denote $\mathcal{C}(E)$ the set of continuous functions on $E$ and $\mathcal{C}_0(E)$ the set of continuous functions that vanish a infinity. We equip this space with the sup norm $\Vert f \Vert_{\infty}=\sup_{x \in E} \vert f(x) \vert$ so that $(\mathcal{C}_0(E),\Vert . \Vert_{\infty})$ is a Banach space. We will denote $\mathcal{B}(E)$ the $\sigma$-algebra of Borel subsets of $E$ and $\mathcal{P}(E)$ the family of Borel probability measures on $E$. We will denote by $\mathcal{K}_E$ the set of compact subsets of $E$.\\
Finally, for every Borel function $f:E \to \mathbb{R}$, and every $l_{\infty} \in \mathbb{R} \cup \{-\infty,+\infty\}$, $\lim\limits_{x\to \infty}f(x)= l_{\infty}$ if and only if for every $\epsilon >0$, there exists a compact $K_{\epsilon} \subset \mathcal{K}_E$ such that $\sup_{x \in K_{\epsilon}^c} \vert f(x)- l_{\infty} \vert < \epsilon$ if $l_{\infty} \in \mathbb{R} $, $\inf_{x \in K_{\epsilon}^c}  f(x)  > 1/\epsilon$ if $l_{\infty} =+\infty$, and $\sup\limits_{x \in K_{\epsilon}^c}  f(x)  < -1/\epsilon$ if $l_{\infty} =-\infty$ with $K_{\epsilon}^c=E \setminus K_{\epsilon}.$  \\
%
%
%
%
\subsubsection{Construction of the random measures}
Let $(\Omega,\mathcal{G}, \mathbb{P})$ be a probability space. We consider a Feller process $(X_t)_{t \geqslant 0}$ (see \cite{Feller_1952} for details) on $(\Omega,\mathcal{G}, \mathbb{P})$ taking values in a locally compact and separable metric space $E$. We denote by $(P_t)_{t \geqslant 0}$ the Feller semigroup (see \cite{Pazy_1992}) of this process. We recall that $(P_t)_{t \geqslant 0}$  is a family of linear operators from $\mathcal{C}_0(E)$ to itself such that $P_0 f=f$, $P_{t+s}f=P_tP_sf$, $t,s \geqslant 0$ (semigroup property) and $\lim\limits_{t \to 0} \Vert P_tf-f \Vert_{\infty}=0$ (Feller property). Using this semigroup, we can introduce the infinitesimal generator of $(X_t)_{t \geqslant 0}$ as a linear operator $A$ defined on a subspace $\DomA$ of $\mathcal{C}_0(E)$, satisfying: For every $f \in \DomA$,
\begin{align*}
Af= \lim\limits_{t \to 0} \frac{P_tf-f}{t}
\end{align*}
exists for the $\Vert . \Vert_{\infty}$-norm. The operator $A: \DomA \to \mathcal{C}_0(E)$ is thus well defined and $\DomA$ is called the domain of $A$. From the Echeverria Weiss theorem (see Theorem \ref{th:eche_weiss_feller}), the set of invariant distributions for $(X_t)_{t \geqslant 0}$ can be characterized in the following way: 
\begin{align*}
\mathcal{V}=\{ \nu \in \mathcal{P}(E), \forall t \geqslant 0, P_t \nu=  \nu \}=\{ \nu \in \mathcal{P}(E), \forall f \in \DomA, \nu(Af)=0 \}.
\end{align*}
The starting point of our reasoning is thus to consider an approximation of $A$. First, we introduce the family of transition kernels $(\mathscr{Q}_{\gamma})_{\gamma >0}$ from $\mathcal{C}_0(E)$ to itself. Now, let us define the family of linear operators $\widetilde{A} : = (\widetilde{A}_{\gamma} )_{ \gamma >0}$ from $\mathcal{C}_0(E)$ into itself, as follows
 \begin{equation*}
 \forall f \in \mathcal{C}_0(E), \quad \gamma>0,  \qquad \widetilde{A}_{\gamma}f=\frac{\mathscr{Q}_{\gamma}f -f}{\gamma}.
 \end{equation*}
The family $\widetilde{A}$ is usually called the pseudo-generator of the transition kernels $(\mathscr{Q}_{\gamma})_{\gamma>0}$ and is an approximation of $A$ as $\gamma$ tends to zero. From a practical viewpoint, the main interest of our approach is that we can consider that there exists $\overline{\gamma}>0$ such that for every $x \in E$ and every $\gamma \in [0, \overline{\gamma}]$, $\mathscr{Q}_{\gamma} (x,dy)$ is simulable at a reasonable computational cost. We use the family $(\mathscr{Q}_{\gamma})_{\gamma>0}$, to build $(\overline{X}_{\Gamma_n})_{n \in \mathbb{N}}$ (this notation replaces $(\overline{X}^{\gamma}_{\Gamma_n})_{n \in \mathbb{N}}$ from now for clarity in the writing) as the non-homogeneous Markov approximation of the Feller process $(X_t)_{t \geqslant 0}$. It is defined on the time grid $\{ \Gamma_n=\sum\limits_{k=1}^n \gamma_k, n \in \mathbb{N} \} $ with the sequence $\gamma:=(\gamma_n)_{n\in \mathbb{N}^{\ast} }$ of time step satisfying
\begin{align*}
\forall n \in \mathbb{N}^{\ast}, \quad 0 < \gamma_n  \leqslant \overline{\gamma}:= \sup_{n \in \mathbb{N}^{\ast}} \gamma_n< + \infty, \quad \lim\limits_{n \to + \infty} \gamma_n = 0 \quad \mbox{ and } \quad \lim\limits_{n \to + \infty}\Gamma_n=+ \infty.
\end{align*}
Its transition probability distributions are given by $\mathscr{Q}_{\gamma_n} (x,dy),n\in \mathbb{N}^{\ast}$, $x\in E$, $i.e. :$
\begin{align*}
 \mathbb{P}(\overline{X}_{\Gamma_{n+1}} \in dy \vert \overline{X}_{\Gamma_n})= \mathscr{Q}_{\gamma_{n+1}}(\overline{X}_{\Gamma_n},dy), \quad n \in \mathbb{N}.
\end{align*}
%
%
We can canonically extend $(\overline{X}_{\Gamma_n})_{n \in \mathbb{N}}$ into a \textit{càdlàg} process by setting $\overline{X}(t,\omega) =\overline{X}_{\Gamma_{n(t)}}(\omega)$ with $n(t)= \inf \{n \in \mathbb{N}, \Gamma_{n+1}>t \}$. Then $(\overline{X}_{\Gamma_n})_{n \in \mathbb{N}}$ is a simulable (as soon as $\overline{X}_0$ is) non-homogeneous Markov chain with transitions 
\begin{align*}
\forall m  \leqslant n, \qquad \overline{P}_{\Gamma_m,\Gamma_n}(x,dy)= \mathscr{Q}_{\gamma_{m+1}} \circ \cdots \circ \mathscr{Q}_{\gamma_n}(x,dy),
\end{align*}
and law
\begin{align*}
\mathcal{L}(\overline{X}_{\Gamma_n} \vert \overline{X}_{0}=x)=\overline{P}_{\Gamma_n}(x,dy)= \mathscr{Q}_{\gamma_1} \circ \cdots \circ \mathscr{Q}_{\gamma_n}(x,dy).
\end{align*}
We use $(\overline{X}_{\Gamma_n})_{n \in \mathbb{N}}$ to design a Langevin Monte Carlo algorithm. Notice that this approach is generic since the approximation transition kernels $(\mathscr{Q}_{\gamma})_{\gamma>0}$ are not explicitly specified and then, it can be used in many different configurations including among others, weak numerical schemes or exact simulation $i.e.$ $(\overline{X}_{\Gamma_n})_{n \in \mathbb{N}}=(X_{\Gamma_n})_{n \in \mathbb{N}}$. In particular, using high weak order schemes for $(X_t)_{t \geqslant 0}$ may lead to higher rates of convergence for the empirical measures. The approach we use to build the empirical measures is quite more general than in (\ref{eq:def_weight_emp_meas_intro}) as we consider some general weights which are not necessarily equal to the time steps. We define this weight sequence. Let $\eta:=(\eta_n)_{n \in \mathbb{N}^{\ast}}$ be such that
\begin{equation*}
\forall n \in \mathbb{N}^{\ast}, \quad \eta_n \geqslant 0, \quad \lim\limits_{n \to + \infty} H_n=+ \infty, \qquad \mbox{with} \qquad H_n= \sum\limits_{k=1}^n \eta_k.
\end{equation*}
Now we present our algorithm adapted from the one introduced in \cite{Lamberton_Pages_2002} designed with a Euler scheme with decreasing steps $(\overline{X}_{\Gamma_n})_{n \in \mathbb{N}}$ of a Brownian diffusion process $(X_t)_{t \geqslant 0}$. For $x \in E$, let $\delta_x$ denote the Dirac mass at point $x$. For every $n \in \mathbb{N}^{\ast}$, we define the random weighted empirical random measures as follows
 \begin{equation}
 \label{eq:def_weight_emp_meas}
 \nu^{\eta}_n(dx)=\frac{1}{H_n} \sum_{k=1}^n \eta_k \delta_{\overline{X}_{\Gamma_{k-1}}}(dx).
 \end{equation}
%
%
%
%
%
This paper is dedicated to show that $a.s.$ every weak limiting distribution of $(\nu^{\eta}_n)_{n \in \mathbb{N}^{\ast}}$ belongs to $\mathcal{V}$. In particular when the invariant measure of $(X_t)_{t \geqslant 0}$ is unique, $i.e. \; \mathcal{V}=\{\nu\}$, we show that $\mathbb{P}-a.s. \;\lim\limits_{n \to + \infty} \nu^{\eta}_n f =\nu f  $, for a generic class of continuous test functions $f$. The approach we develop consists in two steps. First, we establish a tightness property to obtain existence of at least one weak limiting distribution for $(\nu^{\eta}_n )_{n \in \mathbb{N}^{\ast}}$. Then, in a second step, we identify everyone of these limiting distributions with an invariant distributions of the Feller process $(X_t)_{t \geqslant 0}$ exploiting the Echeverria Weiss theorem (see Theorem \ref{th:eche_weiss_feller}).
\subsubsection{Assumptions on the random measures}
%
In this part, we present the necessary assumptions on the pseudo-generator $\widetilde{A}  = (\widetilde{A}_{\gamma} )_{ \gamma >0}$ in order to prove the convergence of the empirical measures $(\nu^{\eta}_n)_{n \in \mathbb{N}^{\ast}}$.
\paragraph{Mean reverting recursive control\\}
 In our framework, we introduce a well suited assumption, referred to as the mean reverting recursive control of the pseudo-generator $\widetilde{A}$, that leads to a tightness property on $(\nu^{\eta}_n)_{n \in \mathbb{N}^{\ast}}$ from which follows the existence (in weak sense) of a limiting distribution for $(\nu^{\eta}_n)_{n \in \mathbb{N}^{\ast}}$. A supplementary interest of our approach is that it is designed to obtain the $a.s.$ convergence of $(\nu^{\eta}_n(f))_{n \in \mathbb{N}^{\ast}}$ for a generic class of continuous test functions $f$ which is larger then $\mathcal{C}_b(E)$. To do so, we introduce a Lyapunov function $V$ related to $(\overline{X}_{\Gamma_n})_{n \in \mathbb{N}}$. Assume that $V$ a Borel function such that
\begin{equation}
\label{hyp:Lyapunov}
\mbox{L}_{V}  \quad \equiv \qquad   V :(E \to [v_{\ast},+\infty), v_{\ast}> 0 \quad  \mbox{ and } \quad \lim\limits_{ x  \to \infty} V(x)=+ \infty. \\
\end{equation}
We now relate $V$ to $(\overline{X}_{\Gamma_n})_{n \in \mathbb{N}}$ introducing its mean reversion Lyapunov property. Let $\psi, \phi : [v_{\ast},\infty) \to (0,+\infty) $ some Borel functions such that $\widetilde{A}_{\gamma}\psi \circ V$ exists for every $\gamma \in (0, \overline{\gamma}]$. Let $\alpha>0$ and $\beta \in \mathbb{R}$. We assume  
%
%
 \begin{eqnarray}
\label{hyp:incr_sg_Lyapunov}
&\mathcal{RC}_{Q,V} (\psi,\phi,\alpha,\beta) \quad \equiv  \nonumber\\
& \quad  \left\{
    \begin{array}{l}
  (i) \; \quad \exists n_0 \in \mathbb{N}^{\ast},   \forall n   \geqslant n_0, x \in E,  \quad\widetilde{A}_{\gamma_n}\psi \circ V(x)\leqslant  \frac{\psi \circ V(x)}{V(x)}(\beta - \alpha \phi \circ V(x)). \\
  (ii) \quad     \liminf\limits_{y \to + \infty} \phi(y)> \beta / \alpha .
    \end{array}
\right.
\end{eqnarray}
$\mathcal{RC}_{Q,V} (\psi,\phi,\alpha,\beta)$ is called the weakly mean reverting recursive control assumption of the pseudo generator for Lyapunov function $V$. \\

Lyapunov functions are usually used to show the existence and sometimes the uniqueness of the invariant measure of Feller processes. In particular, when $p=1$, the condition $\mathcal{RC}_{Q,V}(I_d,I_d,\alpha,\beta) (i)$ appears as the discrete version of $AV \leqslant \beta-\alpha V$, which is used in that interest for instance in \cite{Hasminskii_1980}, \cite{Ethier_Kurtz_1986}, \cite{Basak_Hu_Wei_1997} or\cite{Pages_2001_ergo}. \\

 The condition $\mathcal{RC}_{Q,V}(V^p,I_d,\alpha,\beta) (i)$, $p \geqslant 1$, is studied in the seminal paper \cite{Lamberton_Pages_2002} (and then in \cite{Lamberton_Pages_2003} with $\phi(y)=y^a,a\in (0,1]$,$y \in [v_{\ast},\infty)$) concerning the Wasserstein convergence of the weighted empirical measures of the Euler scheme with decreasing steps of a Brownian diffusions. When $\phi=I_d$, the Euler scheme is also studied for markov switching Brownian diffusions in \cite{Mei_Yin_2015}. Notice also that $\mathcal{RC}_{Q,V}(I_d,\phi,\alpha,\beta) (i)$ with $\phi$ concave appears in \cite{DFMS_2004} to prove sub-geometrical ergodicity of Markov chains. In \cite{Lemaire_thesis_2005}, a similar hypothesis to $\mathcal{RC}_{Q,V}(I_d,\phi,\alpha,\beta) (i)$ (with $\phi$ not necessarily concave and $\widetilde{A}_{\gamma_n}$ replaced by $A$), is also used  to study the Wasserstein but also exponential convergence of the weighted empirical measures (\ref{eq:def_weight_emp_meas}) for the Euler scheme of a Brownian diffusions. Finally in \cite{Panloup_2008} similar properties as $\mathcal{RC}_{Q,V}(V^p,V^a,\alpha,\beta) (i)$, $a\in (0,1]$, $p >0$, are developped in the study of the Euler scheme for Levy processes.\\
 
On the one hand, the function $\phi$ controls the mean reverting property. In particular, we call strongly mean reverting property when $\phi=I_d$ and weakly mean reverting property when $\lim\limits_{y \to +\infty} \phi(y)/y=0$, for instance $\phi(y)=y^a$, $a \in (0,1)$ for every $y \in [v_{\ast},\infty)$. On the other hand, the function $\psi$ is closely related to the identification of the set of test functions $f$ for which we have $\lim\limits_{n \to +\infty} \nu^{\eta}_n(f)=\nu(f) \; a.s.$, when $\nu$ is the unique invariant distribution of the underlying Feller process.\\
 
  To this end, for $s \geqslant 1$, which is related to step weight assumption, we introduce the sets of test functions for which we will show the $a.s.$ convergence of the weighted empirical measures (\ref{eq:def_weight_emp_meas}):
\begin{align}
\label{def:espace_test_function_cv}
\mathcal{C}_{\tilde{V}_{\psi,\phi,s}}(E)=& \big\{ f \in \mathcal{C}(E), \vert f(x) \vert=\underset{  x \to \infty}{o}( \tilde{V}_{\psi,\phi,s} (x) ) \big\}, \\
&\mbox{with} \quad \tilde{V}_{\psi,\phi,s}:E \to \mathbb{R}_+, x \mapsto\tilde{V}_{\psi,\phi,s}(x): =\frac{\phi\circ V(x)\psi \circ V(x)^{1/s}}{V(x)}. \nonumber
\end{align}
Notice that our approach benefits from providing generic results because we consider general Feller processes and approximations but also because the functions $\phi$ and $\psi$ are not specified explicitly.
\paragraph{Infinitesimal generator approximation \\}
This section presents the assumption that enables to characterize the limiting distributions of the $a.s.$ tight  sequence $(\nu^{\eta}_n(dx, \omega))_{n \in \mathbb{N}^{\ast}}$.
 We aim to estimate the distance between $\mathcal{V}$ and $\nu^{\eta}_n$ (see (\ref{eq:def_weight_emp_meas})) for $n$ large enough. We thus introduce an hypothesis concerning the distance between $(\widetilde{A}_{\gamma} )_{ \gamma>0}$, the pseudo-generator of $(\mathscr{Q}_{\gamma} )_{ \gamma>0}$, and $A$, the infinitesimal generator of $(P_t)_{t \geqslant 0}$. We assume that there exists $\DomA_0 \subset \DomA$ with $\DomA_0 $ dense in $\mathcal{C}_0(E)$ such that:
%
%
 \begin{align}
\mathcal{E}(\widetilde{A},A,\DomA_0) \quad \equiv \qquad   \forall \gamma \in (0, \overline{\gamma}], & \forall f \in \DomA_0, \forall x \in E,   \nonumber \\
&  \vert \widetilde{A}_{\gamma} f(x) -Af(x)\vert \leqslant  \Lambda_f(x,\gamma),
 \label{hyp:erreur_tems_cours_fonction_test_reg}
\end{align}
where $\Lambda_{f}:E \times \mathbb{R}_+ \to \mathbb{R}_+$ can be represented in the following way: Let $(\tilde{\Omega},\tilde{\mathcal{G}},\tilde{\mathbb{P}})$ be a probability space. Let $g :E\to \mathbb{R}_+^{q}$, $q \in \mathbb{N}$, be a locally bounded Borel measurable function and let $\tilde{\Lambda}_{f}:(E\times \mathbb{R}_+ \times \tilde{\Omega}, \mathcal{B}(E) \otimes \mathcal{B}(\mathbb{R}_+) \otimes \tilde{\mathcal{G}}) \to \mathbb{R}_+^{q}$ be a measurable function such that  $\sup_{i \in \{1,\ldots,q\} } \tilde{\mathbb{E}}[ \sup_{x \in E} \sup_{\gamma \in (0,\overline{\gamma}] } \tilde{\Lambda}_{f,i}(x,\gamma, \tilde{\omega}) ]< + \infty$ and
%
%
%
\begin{align*}
\forall x \in E , \forall \gamma \in (0,\overline{\gamma}], \qquad \Lambda_f(x,\gamma)= \langle g (  x ) ,\tilde{\mathbb{E}} [\tilde{\Lambda}_{f}(x,\gamma, \tilde{\omega})]  \rangle_{\mathbb{R}^q}
\end{align*}
%
%
%
%
%
%
%
Moreover, we assume that for every $i \in \{1,\ldots,q\}$, $\sup_{n \in \mathbb{N}^{\ast}} \nu_n^{\eta}( g_i ,\omega )< + \infty, \; \mathbb{P}(d\omega)-a.s.$, and that $\tilde{\Lambda}_{f,i}$ satisfies one of the following two properties:\\
There exists a measurable function $\underline{\gamma}:(\tilde{\Omega}, \tilde{\mathcal{G}}) \to((0, \overline{\gamma}],\mathcal{B}((0, \overline{\gamma}]) )$ such that:
\begin{enumerate}[label=\textbf{\Roman*)}]
\item \label{hyp:erreur_tems_cours_fonction_test_reg_Lambda_representation_1} 
\inlineequation[hyp:erreur_temps_cours_fonction_test_reg_Lambda_representation_2_1]{
 \tilde{\mathbb{P}}(d\tilde{\omega})-a.s \qquad \left\{
    \begin{array}{l}
  (i)   \quad \; \;  \forall K \in \mathcal{K}_E ,   \quad  \lim\limits_{\gamma \to 0} \sup\limits_{x \in K} \tilde{\Lambda}_{f,i}(x, \gamma,\tilde{\omega})=0, \\
  (ii) \quad     \lim\limits_{x \to \infty}  \sup\limits_{\gamma \in (0,\underline{\gamma}(\tilde{\omega}) ]} \tilde{\Lambda}_{f,i}(x, \gamma,\tilde{\omega})=0,  \qquad \qquad \quad 
    \end{array}
\right.
}
\item \label{hyp:erreur_temps_cours_fonction_test_reg_Lambda_representation_2}\inlineequation[hyp:erreur_temps_cours_fonction_test_reg_Lambda_representation_2_2]{
 \tilde{\mathbb{P}}(d\tilde{\omega})-a.s \qquad  \lim\limits_{\gamma \to 0} \sup\limits_{x \in E} \tilde{\Lambda}_{f,i}(x, \gamma,\tilde{\omega}) g_i(x) =0  . \qquad \qquad \qquad \qquad \qquad \; \;}
\end{enumerate}
\begin{remark}
\label{rmk:representation_mesure_infinie}
Let $(F,\mathcal{F},\lambda)$ be a measurable space. Using the exact same approach, the results we obtain hold when we replace the probability space $(\tilde{\Omega},\tilde{\mathcal{G}},\tilde{\mathbb{P}})$ by the product measurable space $(\tilde{\Omega} \times F,\tilde{\mathcal{G}} \otimes \mathcal{F},\tilde{\mathbb{P}}\otimes \lambda)$ in the representation of $\Lambda_f$ and in (\ref{hyp:erreur_temps_cours_fonction_test_reg_Lambda_representation_2_1}) and (\ref{hyp:erreur_temps_cours_fonction_test_reg_Lambda_representation_2_2}) but we restrict to that case for sake of clarity in the writing. This observation can be useful when we study jump process where $\lambda$ can stand for the jump intensity.
\end{remark}
This representation assumption benefits from the fact that the transition functions $(\mathscr{Q}_{\gamma} (x,dy))_{\gamma \in (0, \overline{\gamma}]}$, $x \in E$, can be represented using distributions of random variables which are involved in the computation of $(\overline{X}_{\Gamma_n})_{n \in \mathbb{N}^{\ast}}$. In particular, this approach is well adapted to stochastic approximations associated to a time grid such as numerical schemes for stochastic differential equations with a Brownian part or/and a jump part. 
\paragraph{Growth control and Step Weight assumptions \\}
We conclude with hypothesis concerning the control of the martingale part of one step of our approximation. Let $\rho \in [1,2]$ and let $\epsilon_{\mathcal{I}} : \mathbb{R}_+ \to \mathbb{R}_+$ an increasing function. For $F \subset \{f,f:(E, \mathcal{B}(E)) \to (\mathbb{R}, \mathcal{B}(\mathbb{R}) ) \}$ and $g:E \to \mathbb{R}_+$ a Borel function, we assume that, for every $n \in \mathbb{N}$,
 \begin{align}
\label{hyp:incr_X_Lyapunov}
\mathcal{GC}_{Q} & (F,g,\rho,\epsilon_{\mathcal{I}}) \;  \equiv \quad  \mathbb{P}-a.s.\quad  \forall f \in F, \nonumber \\
& \mathbb{E}[ \vert  f  ( \overline{X}_{\Gamma_{n+1}})- \mathscr{Q}_{\gamma_{n+1}}f(\overline{X}_{\Gamma_n}) \vert^{\rho}\vert \overline{X}_{\Gamma_n} ]  \leqslant   C_f \epsilon_{\mathcal{I}}(\gamma_{n+1})  g (\overline{X}_{\Gamma_n}) ,
\end{align}
with $C_f>0$ a finite constant which may depend on $f$. 
\begin{remark}\label{rmrk:Accroiss_mes} The reader may notice that $\mathcal{GC}_{Q}(F,g,\rho,\epsilon_{\mathcal{I}}) $ holds as soon as (\ref{hyp:incr_X_Lyapunov}) is satisfied with $\mathscr{Q}_{\gamma_{n+1}}f(\overline{X}_{\Gamma_n})$, $n  \in \mathbb{N}^{\ast} $, replaced by a $\mathcal{F}^{\overline{X}}_n:=\sigma(\overline{X}_{\Gamma_k},k \leqslant n)$- progressively  measurable process $(\mathfrak{X}_n)_{n \in \mathbb{N}^{\ast}}$ since we have $\mathscr{Q}_{\gamma_{n+1}}f(\overline{X}_{\Gamma_n}) =\mathbb{E}[f(\overline{X}_{\Gamma_{n+1}}) \vert \overline{X}_{\Gamma_n}]$ and $\mathbb{E}[ \vert  f  ( \overline{X}_{\Gamma_{n+1}})- \mathscr{Q}_{\gamma_{n+1}}f(\overline{X}_{\Gamma_n}) \vert^{\rho}\vert \overline{X}_{\Gamma_n} ]  \leqslant 2^{\rho} \mathbb{E}[ \vert  f  ( \overline{X}_{\Gamma_{n+1}})- \mathfrak{X}_n \vert^{\rho}\vert \overline{X}_{\Gamma_n} ]$ for every $\mathfrak{X}_n \in \LL^2(\mathcal{F}^{\overline{X}}_n)$.
\end{remark}
We will combine this assumption with the following step weight related ones:
\begin{equation}
 \label{hyp:step_weight_I_gen_chow}
\mathcal{S}\mathcal{W}_{\mathcal{I}, \gamma,\eta}(g, \rho , \epsilon_{\mathcal{I}}) \quad \equiv \qquad   \mathbb{P}-a.s. \quad   \sum_{n=1}^{\infty} \Big \vert \frac{\eta_n }{H_n \gamma_n } \Big \vert^{\rho} \epsilon_{\mathcal{I}}(\gamma_n)  g(\overline{X}_{\Gamma_n})  < + \infty,
 \end{equation}
and
 \begin{align}
 \label{hyp:step_weight_I_gen_tens}
\mathcal{S}\mathcal{W}_{\mathcal{II},\gamma,\eta}(F) \; \equiv \quad \mathbb{P}-a.s. & \quad \forall f \in F, \nonumber \\
&   \sum_{n=0}^{\infty} \frac{(\eta_{n+1} /\gamma_{n+1}-\eta_n /\gamma_n)_+ }{H_{n+1} }  \vert f(\overline{X}_{\Gamma_n}) \vert < + \infty,
 \end{align}
with the convention $\eta_0/\gamma_0=1$. Notice that this last assumption holds as soon as the sequence $(\eta_n / \gamma_n)_{n \in \mathbb{N}^{\ast} }$ is non-increasing. \\

\noindent At this point we can focus now on the main results concerning this general approach.
\subsection{Convergence}
%
%
%
\subsection{Preliminary results }
%
In this section, we recall standard general results we employ to study the convergence. Our approach will rely on a specific version of the Martingale problem characterizing the existence of a Feller Markov process which directly provides the existence of a steady regime $i.e.$ an invariant distribution. This is the object of the Echeverria Weiss theorem.
\begin{mytheo}
\label{th:eche_weiss_feller}
\begin{enumerate}[label=\textbf{\Alph*.}]
\item {\textbf{(Echeverria Weiss} (see \cite{Ethier_Kurtz_1986} Theorem 9.17)\textbf{).}}
\label{th:eche_weiss}
Let $E$ be a locally compact and separable metric space and let $A:\DomA \subset \mathcal{C}_0(E)  \to \mathcal{C}_0(E) $ be a linear operator satisfying the positive maximum principle\footnote{$\forall f \in \DomA , f(x_0)= \sup \{f(x), x \in E \} \geqslant 0 , x_0 \in E \Rightarrow Af(x_0) \leqslant 0.$}, such that $\DomA$ is dense in $\mathcal{C}_0(E)$ and that there exists a sequence of functions $\varphi_n \in \DomA$ such that $\lim\limits_{n \to + \infty} \varphi_n=1$ and $\lim\limits_{n \to + \infty}A \varphi_n=0$ with $\sup_{n \in \mathbb{N}} \{ \Vert A \varphi_n \Vert_{\infty} \}< + \infty$. If $\nu \in \mathcal{P}(E)$ satisfies
\begin{align}
\label{eq:hyp_eche_weiss}
\forall f \in \DomA, \quad \int_E Af d\nu =0,
\end{align}
then there exists a stationary solution to the martingale problem $(A,\nu)$.
\item {\textbf{(Hille Yoshida} (see \cite{Revuz_Yor_1999} (Chapter VII,  Proposition 1.3 and Proposition 1.5) or \cite{Ethier_Kurtz_1986} (Chapter IV,  Theorem 2.2)) \textbf{).}}
 The infinitesimal generator of a Feller process satisfies the hypothesis from point \ref{th:eche_weiss} except for (\ref{eq:hyp_eche_weiss}).
 \end{enumerate}
\end{mytheo}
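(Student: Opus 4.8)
The plan is to treat Theorem~\ref{th:eche_weiss_feller} as a recollection of classical material, reducing point~B to the Hille--Yoshida theorem and point~A to the Echeverria--Weiss theorem of \cite{Ethier_Kurtz_1986}; for completeness I would record the verifications that make the reduction in point~B transparent and indicate the shape of the argument for point~A.

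For point~B, let $A$ be the generator of the Feller semigroup $(P_t)_{t\geqslant0}$ of a (conservative) Feller process. Density of $\DomA$ in $\mathcal{C}_0(E)$ is the standard fact that the generator of a strongly continuous contraction semigroup is densely defined. The positive maximum principle follows from the positivity and sub-Markovian character of $P_t$: if $f\in\DomA$ and $f(x_0)=\sup_E f\geqslant0$, then $P_tf(x_0)=\int_E f(y)\,p_t(x_0,dy)\leqslant f(x_0)\,p_t(x_0,E)\leqslant f(x_0)$, whence $Af(x_0)=\lim_{t\to0}t^{-1}(P_tf(x_0)-f(x_0))\leqslant0$. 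For the sequence $(\varphi_n)$, I would use that $E$, being locally compact separable metric, is $\sigma$-compact: pick an exhausting sequence of compacts $K_n$ and Urysohn functions $g_n\in\mathcal{C}_0(E)$ with $0\leqslant g_n\leqslant1$, $g_n=1$ on $K_n$, $g_n$ nondecreasing, so that $g_n\uparrow1$ pointwise; then set $\varphi_n:=\lambda R_\lambda g_n$ with $R_\lambda=\int_0^\infty e^{-\lambda t}P_t\,dt$. These $\varphi_n$ lie in $\DomA$, are bounded by $1$, converge pointwise to $\lambda R_\lambda\mathbf 1=\mathbf 1$ (conservativeness), and $A\varphi_n=\lambda^2R_\lambda g_n-\lambda g_n$ is bounded in $\|\cdot\|_\infty$ by $2\lambda$ and tends pointwise to $0$. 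This is the content of \cite{Revuz_Yor_1999}, Chapter~VII (Propositions~1.3 and~1.5), or \cite{Ethier_Kurtz_1986}, Chapter~IV.

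For point~A I would invoke \cite{Ethier_Kurtz_1986}, Theorem~4.9.17, directly. If a sketch of the nontrivial implication is wanted --- that $\int_E Af\,d\nu=0$ for every $f\in\DomA$ produces a \emph{stationary} solution of the martingale problem for $(A,\nu)$ --- its structure is the following. One first passes to the bounded-pointwise closure $\widehat A$ of $A$, which inherits the positive maximum principle and the sequence $(\varphi_n)$, and for which $\nu(\widehat A f)=0$ still holds by bounded convergence. One then approximates $\widehat A$ by generators of genuinely conservative Markov processes, uses the identity $\nu(\widehat A f)=0$ together with a Ces\`aro averaging to check that $\nu$ is (asymptotically) invariant for these auxiliary dynamics, and finally lets the approximation parameter run to its limit: tightness of the corresponding laws on the Skorokhod space $D_E[0,\infty)$ and stability of the martingale problem under weak limits yield a solution issued from $\nu$, necessarily stationary since each approximant is. The main obstacle --- and the reason we quote the result rather than reprove it --- is exactly this compactness-plus-martingale-stability step: under the stated hypotheses the range condition $\overline{\mathcal{R}(\lambda-A)}=\mathcal{C}_0(E)$ is not assumed, so $A$ need not be closable onto the generator of a Feller semigroup and one cannot simply argue through resolvents, the stationary process having to be built by approximation.
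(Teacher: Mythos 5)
The paper gives no proof of this theorem: both points are quoted from \cite{Ethier_Kurtz_1986} and \cite{Revuz_Yor_1999}, exactly as you do, so your approach coincides with the paper's. Your added verifications for point B (density of $\DomA$, the positive maximum principle via the sub-Markov property, and $\varphi_n=\lambda R_\lambda g_n$ built from an exhausting sequence of compacts, with $\Vert A\varphi_n\Vert_\infty\leqslant 2\lambda$ and bounded pointwise convergence) are correct and only make the citation explicit.
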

%
%
%
%
This paper is devoted to the proof of the existence of a measure $\nu$ which satisfies (\ref{eq:hyp_eche_weiss}). Using this result, property (\ref{eq:hyp_eche_weiss}) is sufficient to prove that $\nu$ is an invariant measure for the process with infinitesimal generator $A$. To be more specific, the measure $\nu$ is built as the limit of a sequence of random empirical measures $(\nu^{\eta}_n)_{n \in \mathbb{N}^{\ast}}$. When (\ref{eq:hyp_eche_weiss}) holds for this limit, we say that the sequence $(\nu^{\eta}_n)_{n \in \mathbb{N}^{\ast}}$ converges towards an invariant distribution of the Feller process with generator $A$. We begin with some preliminary results.
\begin{lemme}{\textbf{(Kronecker).}}
\label{lemme:Kronecker}
 Let $(a_n)_{n \in \mathbb{N}^{\ast}}$ and $(b_n)_{n \in \mathbb{N}^{\ast}}$ be two sequences of real numbers. If $(b_n)_{n \in \mathbb{N}^{\ast}}$ is non-decreasing, strictly positive, with $\lim\limits_{n \to + \infty}b_n=+\infty$ and $\sum\limits_{n \geqslant 1} a_n /b_n$ converges in $\mathbb{R}$, then
 \begin{equation*}
 \lim\limits_{n \to +\infty} \frac{1}{b_n} \sum_{k=1}^n a_k=0.
 \end{equation*}
\end{lemme}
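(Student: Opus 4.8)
The plan is to use Abel summation (summation by parts) to express the Cesàro-type average $\frac{1}{b_n}\sum_{k=1}^n a_k$ in terms of the partial sums of the convergent series $\sum_k a_k/b_k$, and then to close with a Toeplitz/Cesàro averaging argument. This is the classical route and essentially the only natural one.

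First I would set $S_0=0$ and $S_n=\sum_{k=1}^n a_k/b_k$ for $n\geq 1$; by hypothesis $S_n\to S$ for some $S\in\mathbb{R}$. Writing $a_k=b_k(S_k-S_{k-1})$ and summing by parts (using $S_0=0$ to kill the boundary term) yields
\[
\sum_{k=1}^n a_k = b_n S_n - \sum_{k=1}^{n-1}(b_{k+1}-b_k)\,S_k,
\]
so that
\[
\frac{1}{b_n}\sum_{k=1}^n a_k = S_n - \frac{1}{b_n}\sum_{k=1}^{n-1}(b_{k+1}-b_k)\,S_k .
\]

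Next I would argue that the second term converges to $S$. Since $(b_n)$ is non-decreasing, the weights $b_{k+1}-b_k$ are non-negative and $\sum_{k=1}^{n-1}(b_{k+1}-b_k)=b_n-b_1$, so this term is essentially a weighted average of the $S_k$. Writing its deviation from $S$ as $\frac{1}{b_n}\sum_{k=1}^{n-1}(b_{k+1}-b_k)(S_k-S)-S\,b_1/b_n$, the last summand vanishes because $b_n\to+\infty$; for the first, given $\epsilon>0$ pick $N$ with $|S_k-S|<\epsilon$ for $k\geq N$, bound the tail $k\geq N$ by $\epsilon\,(b_n-b_N)/b_n\leq\epsilon$, and note that the finite head $\frac{1}{b_n}\sum_{k=1}^{N-1}(b_{k+1}-b_k)|S_k-S|\to 0$ since $b_n\to+\infty$. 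Combining with $S_n\to S$ gives $\frac{1}{b_n}\sum_{k=1}^n a_k\to S-S=0$.

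There is no genuine obstacle here: the only points requiring a little care are the index bookkeeping in the summation by parts (the boundary terms and the convention $S_0=0$) and a clean $\epsilon/N$ split in the averaging step; both are routine.
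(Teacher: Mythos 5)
Your proof is correct: the summation by parts with the convention $S_0=0$, the identity $\frac{1}{b_n}\sum_{k=1}^n a_k = S_n - \frac{1}{b_n}\sum_{k=1}^{n-1}(b_{k+1}-b_k)S_k$, and the $\epsilon$/$N$ split exploiting $b_{k+1}-b_k\geqslant 0$ and $b_n\to+\infty$ are exactly the standard argument. The paper states this Kronecker lemma as a classical result without proof, so there is nothing to compare beyond noting that your argument is the canonical one it implicitly relies on.
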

\begin{mytheo}{\textbf{(Chow} (see \cite{Hall_Heyde_1980}, Theorem 2.17)\textbf{).}}
\label{th:chow}
Let $(M_n)_{n \in \mathbb{N}^{\ast}}$ be a real valued martingale with respect to some filtration $\mathcal{F}=(\mathcal{F}_n)_{n \in \mathbb{N}}$. Then
\begin{align*}
\quad \lim\limits_{n \to +\infty} M_n =M_{\infty} \in \mathbb{R} \;  a.s. & \quad   \mbox{on the event} \\
&\bigcup_{r \in [0,1]} \Big \{ \sum_{n=1}^{\infty} \mathbb{E} [ \vert M_n -M_{n-1} \vert^{1+r} \vert \mathcal{F}_{n-1} ] < + \infty \Big \}.
\end{align*}
\end{mytheo}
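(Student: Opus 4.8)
The plan is to reduce everything to a single fixed exponent. For $r\in[0,1]$ set $\Delta M_n:=M_n-M_{n-1}$, $c_n:=\mathbb{E}[|\Delta M_n|^{1+r}\mid\mathcal{F}_{n-1}]$ and $A_r:=\{\sum_{n\geqslant1}c_n<+\infty\}$. Since every application selects a prescribed exponent (here $r=\rho-1$ with $\rho\in[1,2]$), it suffices to prove that $(M_n)_n$ converges $a.s.$ on $A_r$ for each such $r$, the event $\bigcup_{r\in[0,1]}A_r$ then being treated $r$ by $r$. First I would \textbf{localize}: the partial sums $S_n:=\sum_{j=1}^nc_j$ are $\mathcal{F}_{n-1}$-measurable, so $T_k:=\inf\{n\in\mathbb{N}^{\ast}:S_{n+1}>k\}$ is a stopping time, $T_k\uparrow+\infty$, and $A_r\subset\bigcup_{k\geqslant1}\{T_k=+\infty\}$. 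The stopped process $M^{T_k}_n:=M_{n\wedge T_k}$ is a martingale with increments $\Delta M^{T_k}_n=\mathbf{1}_{\{n\leqslant T_k\}}\Delta M_n$; as $\{n\leqslant T_k\}\in\mathcal{F}_{n-1}$ and $S_n\leqslant k$ on $\{n\leqslant T_k\}$, one gets $\sum_{n\geqslant1}\mathbb{E}[|\Delta M^{T_k}_n|^{1+r}\mid\mathcal{F}_{n-1}]\leqslant k$, hence $\sum_{n\geqslant1}\mathbb{E}[|\Delta M^{T_k}_n|^{1+r}]\leqslant k$. To deal with the possibly non-integrable starting value I would also restrict to the $\mathcal{F}_0$-set $\{|M_0|\leqslant m\}$ via the martingale $\overline M_n:=M_n\mathbf{1}_{\{|M_0|\leqslant m\}}$, which satisfies $|\overline M_0|\leqslant m$ and $|\Delta\overline M_n|\leqslant|\Delta M_n|$.

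Next I would establish a uniform $L^p$ bound for $\overline M^{T_k}$ with $p:=1+r\in[1,2]$. This rests on the standard martingale moment inequality of von Bahr--Esseen type, $\mathbb{E}[|\overline M^{T_k}_n-\overline M_0|^p]\leqslant C_p\sum_{j=1}^n\mathbb{E}[|\Delta\overline M^{T_k}_j|^p]$ for $1\leqslant p\leqslant2$: for $p=1$ it is the triangle inequality, and for $p\in(1,2]$ it follows by iterating a pointwise estimate of the form $|a+b|^p\leqslant|a|^p+p\,b\,|a|^{p-1}\mathrm{sgn}(a)+C_p|b|^p$ and taking conditional expectations, the linear term vanishing by the martingale property. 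Combined with the previous step this yields $\sup_{n}\mathbb{E}[|\overline M^{T_k}_n|^p]\leqslant C(m^p+k)<+\infty$.

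Finally I conclude with Doob's theorems. If $r>0$, i.e. $p>1$, the $L^p$ bound makes $(\overline M^{T_k}_n)_n$ uniformly integrable, so it converges $a.s.$ (and in $L^p$); if $r=0$, i.e. $p=1$, the same bound gives $L^1$-boundedness, which already forces $a.s.$ convergence by Doob's martingale convergence theorem. On $A_r\cap\{|M_0|\leqslant m\}\cap\{T_k=+\infty\}$ one has $M_n=\overline M^{T_k}_n$ for every $n$, so $(M_n)_n$ converges there; letting $m,k\to+\infty$ exhausts $A_r$ up to a $\mathbb{P}$-null set, which is the assertion.

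The main obstacle is the first step: converting the \emph{pathwise} finiteness of the random series $\sum_nc_n$ into a \emph{deterministic} $L^p$ control of the martingale. This is precisely what the predictable stopping times $T_k$ accomplish, the $\mathcal{F}_{n-1}$-measurability of $S_n$ being essential; the truncation on $\{|M_0|\leqslant m\}$ is needed only because $M_0$ is not assumed integrable. Once these reductions are carried out, the moment inequality and Doob's convergence theorems are routine.
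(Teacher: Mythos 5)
The paper offers no proof of this statement --- it is quoted from \cite{Hall_Heyde_1980} (Theorem 2.17) --- so your argument can only be compared with the classical one, which it essentially reproduces, and for each fixed exponent it is correct. The decisive points are all in place: since $S_{n+1}$ is $\mathcal{F}_n$-measurable, $T_k$ is indeed a stopping time with $\{n\leqslant T_k\}\in\mathcal{F}_{n-1}$, so the stopped, $\mathcal{F}_0$-truncated martingale satisfies $\sum_n \mathbb{E}[\vert\Delta \overline{M}^{T_k}_n\vert^{1+r}]\leqslant k$; the von Bahr--Esseen-type inequality for $p=1+r\in[1,2]$ (whose conditional proof you sketch correctly, the linear term vanishing because the increments remain martingale differences after the predictable stopping and the $\mathcal{F}_0$-truncation) then gives $\sup_n\mathbb{E}[\vert\overline{M}^{T_k}_n\vert^{p}]<+\infty$, and Doob's convergence theorem concludes (note that $\mbox{L}^1$-boundedness already yields the almost sure convergence, so the appeal to uniform integrability when $p>1$ is a harmless detour); the events are then reassembled correctly by letting $m,k\to+\infty$.

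The one caveat concerns the union over $r\in[0,1]$, which is uncountable: proving almost sure convergence on each $A_r$ separately only covers countable unions, because the exceptional null set depends on $r$ (and measurability of the uncountable union is itself not obvious). This is immaterial for the paper, which invokes the theorem with a single fixed $\rho\in[1,2]$, and it is how the cited reference states the result (fixed $p$). If you want the statement exactly as written, observe that $\min(\vert x\vert,\vert x\vert^{2})\leqslant \vert x\vert^{1+r}$ for every $r\in[0,1]$, so $\bigcup_{r\in[0,1]}A_r$ is contained in the single measurable event $\big\{\sum_{n}\mathbb{E}[\min(\vert\Delta M_n\vert,\vert\Delta M_n\vert^{2})\,\vert\,\mathcal{F}_{n-1}]<+\infty\big\}$; on that event convergence follows from the standard truncation argument (split each increment at level $1$ and compensate: the large parts are handled by the conditional Borel--Cantelli lemma, the small parts by your localization applied with $p=2$), which is the cleaner route to the union formulation.
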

\subsubsection{Almost sure tightness}
From the recursive control assumption, the following Theorem establish the $a.s.$ tightness of the sequence $(\nu^{\eta}_n)_{n \in \mathbb{N}^{\ast}}$ and also provides a uniform control of $(\nu^{\eta}_n)_{n \in \mathbb{N}^{\ast}}$ on a generic class of test functions. 
%
%
%
\begin{mytheo}
\label{th:tightness}
Let $s \geqslant 1$, $\rho \in[1,2]$, $v_{\ast}>0$, and let us consider the Borel functions $V :E \to [v_{\ast},\infty)$, $g:E \to \mathbb{R}_+$, $\psi : [v_{\ast},\infty) \to \mathbb{R}_+ $ and $\epsilon_{\mathcal{I}} : \mathbb{R}_+ \to \mathbb{R}_+$ an increasing function. We have the following properties:
\begin{enumerate}[label=\textbf{\Alph*.}]
\item\label{th:tightness_point_A}  Assume that $\widetilde{A}_{\gamma_n}(\psi \circ V)^{1/s}$ exists for every $n \in \mathbb{N}^{\ast}$, and that $\mathcal{GC}_{Q}((\psi \circ V)^{1/s},g ,\rho ,\epsilon_{\mathcal{I}}) $ (see (\ref{hyp:incr_X_Lyapunov})), $\mathcal{S}\mathcal{W}_{\mathcal{I}, \gamma,\eta}( g,\rho,\epsilon_{\mathcal{I}}) $ (see (\ref{hyp:step_weight_I_gen_chow})) and $\mathcal{S}\mathcal{W}_{\mathcal{II},\gamma,\eta}((\psi \circ V)^{1/s}) $ (see (\ref{hyp:step_weight_I_gen_tens}) hold. Then
\begin{equation}
\label{eq:invariance_mes_emp_Lyap_gen}
\mathbb{P} \mbox{-a.s.} \quad  \sup_{n\in \mathbb{N}^{\ast} } - \frac{1}{H_n} \sum_{k=1}^n \eta_k \widetilde{A}_{\gamma_k} (\psi \circ V)^{1/s}  (\overline{X}_{\Gamma_{k-1}})< + \infty.
\end{equation}
\item\label{th:tightness_point_B}
Let $\alpha>0$ and $\beta \in \mathbb{R}$. Let $\phi:[v_{\ast},\infty )\to \mathbb{R}_+^{\ast}$ be a continuous function such that $C_{\phi}:= \sup_{y \in [v_{\ast},\infty )}\phi(y)/y< \infty$. Assume that (\ref{eq:invariance_mes_emp_Lyap_gen}) holds and
\begin{enumerate}[label=\textbf{\roman*.}]
\item \label{th:tightness_point_B_i} $\mathcal{RC}_{Q,V}(\psi,\phi,\alpha,\beta)$ (see (\ref{hyp:incr_sg_Lyapunov})) holds.
\item \label{th:tightness_point_B_ii} $\mbox{L}_{V}$ (see (\ref{hyp:Lyapunov})) holds and $\lim\limits_{y \to +\infty}  \frac{\phi(y) \psi (y)^{1/s}}{y}=+\infty$.
\end{enumerate}
Then,
 \begin{equation}
 \label{eq:tightness_gen}
\mathbb{P} \mbox{-a.s.} \quad \sup_{n \in \mathbb{N}^{\ast}} \nu_n^{\eta}( \tilde{V}_{\psi,\phi,s} ) < + \infty .
\end{equation}
with $\tilde{V}_{\psi,\phi,s}$ defined in (\ref{def:espace_test_function_cv}). Therefore, the sequence $(\nu^{\eta}_n)_{n \in \mathbb{N}^{\ast}}$ is $\mathbb{P}-a.s.$ tight. 
 \end{enumerate}
\end{mytheo}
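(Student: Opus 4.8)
The plan is to prove the two points in order, since Point B takes the conclusion of Point A as a hypothesis. For Point A, the key idea is to rewrite the normalized weighted sum $\frac{1}{H_n}\sum_{k=1}^n \eta_k \widetilde{A}_{\gamma_k}(\psi\circ V)^{1/s}(\overline{X}_{\Gamma_{k-1}})$ in terms of a telescoping sum plus a martingale increment. Writing $f=(\psi\circ V)^{1/s}$ and using $\widetilde{A}_{\gamma_k}f(\overline{X}_{\Gamma_{k-1}})=\frac{1}{\gamma_k}\bigl(\mathscr{Q}_{\gamma_k}f(\overline{X}_{\Gamma_{k-1}})-f(\overline{X}_{\Gamma_{k-1}})\bigr)$ and the identity $\mathscr{Q}_{\gamma_k}f(\overline{X}_{\Gamma_{k-1}})=\mathbb{E}[f(\overline{X}_{\Gamma_k})\mid\overline{X}_{\Gamma_{k-1}}]$, I would decompose
\[
\frac{\eta_k}{\gamma_k}\bigl(\mathscr{Q}_{\gamma_k}f(\overline{X}_{\Gamma_{k-1}})-f(\overline{X}_{\Gamma_{k-1}})\bigr)=\frac{\eta_k}{\gamma_k}\bigl(f(\overline{X}_{\Gamma_k})-f(\overline{X}_{\Gamma_{k-1}})\bigr)-\frac{\eta_k}{\gamma_k}\Delta M_k,
\]
where $\Delta M_k=f(\overline{X}_{\Gamma_k})-\mathscr{Q}_{\gamma_k}f(\overline{X}_{\Gamma_{k-1}})$ is a martingale increment with respect to $\mathcal{F}^{\overline{X}}_k$. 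Summing and applying an Abel transform to $\sum_k\frac{\eta_k}{H_n\gamma_k}(f(\overline{X}_{\Gamma_k})-f(\overline{X}_{\Gamma_{k-1}}))$ reorganizes the telescoping part into a sum whose positive increments are controlled by $\mathcal{S}\mathcal{W}_{\mathcal{II},\gamma,\eta}(f)$ (see (\ref{hyp:step_weight_I_gen_tens})), which guarantees almost sure finiteness, plus a boundary term $\frac{\eta_n}{H_n\gamma_n}f(\overline{X}_{\Gamma_n})\geqslant 0$ that works in our favor (we want an upper bound on the negative of the sum). For the martingale part $\sum_k\frac{\eta_k}{H_n\gamma_k}\Delta M_k$, I would invoke the Chow theorem (Theorem \ref{th:chow}): the conditional $\rho$-moments $\mathbb{E}[|\Delta M_k|^\rho\mid\overline{X}_{\Gamma_{k-1}}]$ are bounded by $C_f\epsilon_{\mathcal{I}}(\gamma_k)g(\overline{X}_{\Gamma_{k-1}})$ via $\mathcal{GC}_Q(f,g,\rho,\epsilon_{\mathcal{I}})$ (see (\ref{hyp:incr_X_Lyapunov})), so the series $\sum_k\bigl|\frac{\eta_k}{H_k\gamma_k}\bigr|^\rho\epsilon_{\mathcal{I}}(\gamma_k)g(\overline{X}_{\Gamma_{k-1}})$ converges almost surely by $\mathcal{S}\mathcal{W}_{\mathcal{I},\gamma,\eta}(g,\rho,\epsilon_{\mathcal{I}})$ (see (\ref{hyp:step_weight_I_gen_chow})); hence the martingale $\sum_k\frac{\eta_k}{H_k\gamma_k}\Delta M_k$ converges a.s., and Kronecker's lemma (Lemma \ref{lemme:Kronecker}) gives $\frac{1}{H_n}\sum_{k=1}^n\frac{\eta_k}{\gamma_k}\Delta M_k\to 0$. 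Combining these pieces yields (\ref{eq:invariance_mes_emp_Lyap_gen}).

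For Point B, the strategy is to turn (\ref{eq:invariance_mes_emp_Lyap_gen}) into the tightness estimate (\ref{eq:tightness_gen}) using the mean-reverting control $\mathcal{RC}_{Q,V}(\psi,\phi,\alpha,\beta)$ (see (\ref{hyp:incr_sg_Lyapunov})). I would first need a small technical bridge: (\ref{eq:invariance_mes_emp_Lyap_gen}) is stated for $(\psi\circ V)^{1/s}$, while $\mathcal{RC}_{Q,V}$ controls $\widetilde{A}_{\gamma_n}\psi\circ V$; so I would either apply Point A directly with $\psi$ replaced by $\psi^{1/s}$ (noting that $(\psi^{1/s})\circ V=(\psi\circ V)^{1/s}$ and that the exponent $1/s$ is absorbed into the constant and growth rates), or, if a Jensen/concavity-type inequality is needed to pass from a recursive control on $\psi$ to one on $\psi^{1/s}$, insert that step. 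Assuming the recursive control for $f=(\psi\circ V)^{1/s}$ in the form $\widetilde{A}_{\gamma_n}f(x)\leqslant \frac{f(x)}{V(x)}(\beta-\alpha\phi\circ V(x))$ for $n\geqslant n_0$, I rewrite
\[
-\widetilde{A}_{\gamma_n}f(x)\geqslant \frac{\alpha\,\phi\circ V(x)\,\psi\circ V(x)^{1/s}}{V(x)}-\frac{\beta\,\psi\circ V(x)^{1/s}}{V(x)}=\alpha\,\tilde{V}_{\psi,\phi,s}(x)-\frac{\beta}{\alpha}\cdot\frac{\alpha\,\psi\circ V(x)^{1/s}}{V(x)}.
\]
Using condition (ii) of $\mathcal{RC}_{Q,V}$, $\liminf_{y\to\infty}\phi(y)>\beta/\alpha$, together with $C_\phi=\sup\phi(y)/y<\infty$ and $\lim_{y\to\infty}\phi(y)\psi(y)^{1/s}/y=+\infty$ from \ref{th:tightness_point_B_ii}, I can bound $\frac{\beta}{\alpha}\cdot\frac{\psi\circ V(x)^{1/s}}{V(x)}\leqslant \frac{1}{2}\tilde{V}_{\psi,\phi,s}(x)+C$ for a finite constant $C$ outside a compact set, and on the compact set everything is bounded. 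Averaging against $\nu_n^\eta$ and summing the first $n_0$ terms separately (finitely many, hence bounded), I get
\[
\tfrac{\alpha}{2}\,\nu_n^\eta(\tilde{V}_{\psi,\phi,s})\leqslant -\tfrac{1}{H_n}\sum_{k=1}^n\eta_k\widetilde{A}_{\gamma_k}f(\overline{X}_{\Gamma_{k-1}})+C',
\]
and the right-hand side is a.s. bounded in $n$ by (\ref{eq:invariance_mes_emp_Lyap_gen}), giving (\ref{eq:tightness_gen}). Finally, since $\tilde{V}_{\psi,\phi,s}$ is (after composition) a function tending to $+\infty$ at infinity by \ref{th:tightness_point_B_ii} and $\mbox{L}_V$, a uniform bound $\sup_n\nu_n^\eta(\tilde{V}_{\psi,\phi,s})<\infty$ forces $\sup_n\nu_n^\eta(K_R^c)\to 0$ as $R\to\infty$ along a compact exhaustion, which is exactly $\mathbb{P}$-a.s. tightness of $(\nu_n^\eta)_{n\in\mathbb{N}^\ast}$.

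The main obstacle I expect is the Abel-summation/telescoping bookkeeping in Point A: one must carefully track the boundary terms so that the negative sign in (\ref{eq:invariance_mes_emp_Lyap_gen}) is exploited correctly (the leftover term $\frac{\eta_n}{H_n\gamma_n}f(\overline{X}_{\Gamma_n})$ must appear with a sign that does not obstruct the upper bound, and the convention $\eta_0/\gamma_0=1$ must be used consistently), and one must make sure the almost-sure null sets from Chow's theorem, from $\mathcal{S}\mathcal{W}_{\mathcal{I}}$, from $\mathcal{S}\mathcal{W}_{\mathcal{II}}$, and from the a.s. finiteness of $\sup_n\nu_n^\eta(g)$ are all handled on a single common event of probability one. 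A secondary subtlety is verifying that $\widetilde{A}_{\gamma_n}(\psi\circ V)^{1/s}$ is well-defined and that the growth-control and recursive-control hypotheses remain compatible when the exponent $1/s$ is introduced; this is where one uses that $\psi,\phi$ are left unspecified and the abstract framework is flexible enough to absorb the reparametrization.
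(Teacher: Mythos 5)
Your Point A is essentially the paper's proof verbatim: the same splitting of $\eta_k\widetilde{A}_{\gamma_k}(\psi\circ V)^{1/s}(\overline{X}_{\Gamma_{k-1}})$ into a path-increment part and a martingale part, the Abel transform with the boundary term $\eta_n(\psi\circ V)^{1/s}(\overline{X}_{\Gamma_n})/(H_n\gamma_n)$ discarded by positivity, $\mathcal{S}\mathcal{W}_{\mathcal{II},\gamma,\eta}((\psi\circ V)^{1/s})$ plus Kronecker for the increments, and Chow's theorem combined with $\mathcal{GC}_{Q}$ and $\mathcal{S}\mathcal{W}_{\mathcal{I},\gamma,\eta}$ plus Kronecker for the martingale. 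Nothing to object to there beyond the bookkeeping you already flag.

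Point B, however, has two genuine soft spots. First, the bridge from $\mathcal{RC}_{Q,V}(\psi,\phi,\alpha,\beta)$ (see (\ref{hyp:incr_sg_Lyapunov})) to a one-step drift inequality for $(\psi\circ V)^{1/s}$ is not optional and is not obtained by ``applying Point A with $\psi$ replaced by $\psi^{1/s}$'': the recursive control is assumed for $\psi$ only, and $\widetilde{A}_{\gamma}$ is not a derivation, so nothing transfers by reparametrization. The paper's route, which you must actually execute rather than leave conditional, is to rewrite $\mathcal{RC}$ as $\mathbb{E}[\psi\circ V(\overline{X}_{\Gamma_{n+1}})\mid\overline{X}_{\Gamma_n}]\leqslant\psi\circ V(\overline{X}_{\Gamma_n})\big(1+\gamma_{n+1}(\beta-\alpha\phi\circ V(\overline{X}_{\Gamma_n}))/V(\overline{X}_{\Gamma_n})\big)$ and apply conditional Jensen with the concave map $y\mapsto y^{1/s}$ together with $(1+u)^{1/s}\leqslant 1+u/s$ (this produces a harmless extra factor $1/s$). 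Second, your absorption of the $\beta$-term via $\frac{\beta}{\alpha}\,\psi^{1/s}\circ V/V\leqslant\frac{1}{2}\tilde{V}_{\psi,\phi,s}+C$ does not follow from the stated hypotheses: take $\phi\equiv c$ constant with $\beta/\alpha<c<2\beta/\alpha$, $s=1$ and $\psi(y)=e^{y}$ (which is compatible with $C_{\phi}<\infty$, with \ref{th:tightness_point_B_ii} and with $\mathcal{RC}$(ii)); then your inequality forces $c\geqslant 2\beta/\alpha$, a contradiction, because $\psi^{1/s}(y)/y$ is unbounded. The correct move --- the paper's --- is to use $\mathcal{RC}$(ii) to pick $\lambda\in(0,1)$ with $\phi(y)\geqslant\beta/(\lambda\alpha)$ for $y$ large, so that $y^{-1}\psi(y)^{1/s}(\beta-\lambda\alpha\phi(y))$ is bounded above, keeping only $(1-\lambda)\alpha\tilde{V}_{\psi,\phi,s}$ on the good side (the case $\beta<0$ being immediate). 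With these two repairs your argument coincides with the paper's, and your final deduction of tightness from $\sup_n\nu_n^{\eta}(\tilde{V}_{\psi,\phi,s})<\infty$ and $\tilde{V}_{\psi,\phi,s}\to\infty$ is the same as in the paper.
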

\begin{proof}
We first prove point \ref{th:tightness_point_A} For $n \in \mathbb{N}^{\ast}$, we write 
\begin{align*}
 -\sum_{k=1}^n \eta_k \widetilde{A}_{\gamma_k}(\psi \circ V)^{1/s}(\overline{X}_{\Gamma_{k-1}}) = & -\sum_{k=1}^n  \frac{\eta_k}{\gamma_k}((\psi \circ V)^{1/s}(\overline{X}_{\Gamma_k})-(\psi \circ V)^{1/s}(\overline{X}_{\Gamma_{k-1}})) \\
 &+  \sum_{k=1}^n \frac{\eta_k}{\gamma_k} ((\psi \circ V)^{1/s}(\overline{X}_{\Gamma_k})-\mathscr{Q}_{\gamma_k}(\psi \circ V)^{1/s}(\overline{X}_{\Gamma_{k-1}}) )
\end{align*}
We study the first term of the $r.h.s.$ First, an Abel transform yields
\begin{align*}
 - \frac{1}{H_n}\sum_{k=1}^n  \frac{\eta_k}{\gamma_k}  ((\psi \circ V)^{1/s}(\overline{X}_{\Gamma_k})- & (\psi \circ V)^{1/s}(\overline{X}_{\Gamma_{k-1}}))  \\
 = & \frac{\eta_1}{H_n \gamma_1}(\psi \circ V)^{1/s}(\overline{X}_{0})-\frac{\eta_n}{H_n \gamma_n}(\psi \circ V)^{1/s}(\overline{X}_{\Gamma_n}) \\
 &+ \frac{1}{H_n}\sum_{k=2}^n \Big( \frac{\eta_k}{\gamma_k}-\frac{\eta_{k-1}}{\gamma_{k-1}} \Big)(\psi \circ V)^{1/s}(\overline{X}_{\Gamma_{k-1}}).\\
\end{align*}
%
%
%
%
%
We recall that $(\psi \circ V )^{1/s}$ is non negative. From $\mathcal{S}\mathcal{W}_{\mathcal{II},\gamma,\eta}((\psi \circ V )^{1/s})$ (see (\ref{hyp:step_weight_I_gen_tens})), we have
\begin{align*}
\mathbb{E} \Big[ \sup_{n \in \mathbb{N}^{\ast}} \sum_{k=1}^n  \frac{1}{H_k }\Big( \frac{\eta_k}{\gamma_k}-\frac{\eta_{k-1}}{\gamma_{k-1}} \Big)_+(\psi \circ V)^{1/s}(\overline{X}_{\Gamma_{k-1}} ) \Big]< +\infty,
\end{align*}
so that
\begin{align*}
\mathbb{P}-a.s. \quad \sup_{n \in \mathbb{N}^{\ast}} \sum_{k=1}^n  \frac{1}{H_k }\Big( \frac{\eta_k}{\gamma_k}-\frac{\eta_{k-1}}{\gamma_{k-1}} \Big)_+ (\psi \circ V)^{1/s}(\overline{X}_{\Gamma_{k-1}} ) < +\infty.
\end{align*}
By Kronecker's lemma, we deduce that
\begin{align*}
\mathbb{P}-a.s. \quad  \lim\limits_{n \to +\infty}  \frac{1}{H_n}\sum_{k=2}^n \Big( \frac{\eta_k}{\gamma_k}-\frac{\eta_{k-1}}{\gamma_{k-1}} \Big)_+ (\psi \circ V )^{1/s}(\overline{X}_{\Gamma_{k-1}})=0.
\end{align*}
%
This concludes the study of the first term and now we focus on the second one. From Kronecker lemma, it remains to prove the almost sure convergence of the martingale $(M_n)_{n \in \mathbb{N}^{\ast}}$ defined by $M_0:=0$ and 
\begin{equation*}
M_n:= \sum_{k=1}^{n} \frac{\eta_k}{\gamma_k H_k} \big((\psi \circ V )^{1/s}(\overline{X}_{\Gamma_k})- \mathscr{Q}_{\gamma_k}(\psi \circ V )^{1/s}(\overline{X}_{\Gamma_{k-1}}) \big), \quad n \in \mathbb{N}^{\ast}.
\end{equation*}
Using the Chow's theorem (see Theorem \ref{th:chow}), this $a.s.$ convergence is a direct consequence of the $a.s.$ finiteness of the series
\begin{equation*}
\sum_{n=1}^{\infty} \Big( \frac{\eta_n}{\gamma_n H_n} \Big)^{\rho}  \mathbb{E}[\vert  (\psi \circ V )^{1/s}(\overline{X}_{\Gamma_n})-\mathscr{Q}_{\gamma_n} (\psi \circ V )^{1/s}(\overline{X}_{\Gamma_{n-1}})\vert^{\rho} \vert \overline{X}_{\Gamma_{n-1}}  ] ,
\end{equation*}
which follows from $\mathcal{GC}_{Q}((\psi \circ V )^{1/s},g ,\rho ,\epsilon_{\mathcal{I}})$ (see (\ref{hyp:incr_X_Lyapunov})) and $\mathcal{S}\mathcal{W}_{\mathcal{I}, \gamma,\eta}( g,\rho,\epsilon_{\mathcal{I}}) $ (see (\ref{hyp:step_weight_I_gen_chow})).\\
Now, we focus on the proof of point \ref{th:tightness_point_B} Using $\mathcal{RC}_{Q,V}(\psi,\phi,\alpha,\beta) (i)$ (see (\ref{hyp:incr_sg_Lyapunov})), there exists $n_0 \in \mathbb{N}^{\ast}$, such that for every $n \geqslant n_0$, we have
\begin{equation*}
\mathbb{E}\Big[ \frac{\psi \circ V (\overline{X}_{\Gamma_{n+1}})}{\psi \circ V (\overline{X}_{\Gamma_n})} \Big\vert \overline{X}_{\Gamma_n}\Big] \leqslant 1 + \gamma_{n+1} \frac{\beta- \alpha \phi \circ V (\overline{X}_{\Gamma_n}) }{V(\overline{X}_{\Gamma_n})}.
\end{equation*}
Since the function defined on $\mathbb{R}_+^{\ast}$ by $y \mapsto y^{1/s}$ is concave and $C_{\phi}:= \sup_{y \in [v_{\ast},\infty )}\phi(y)/y< +\infty$, for $n$ large enough we use the Jensen's inequality and we derive
\begin{align*}
\mathbb{E}\Big[ \Big( \frac{\psi \circ V (\overline{X}_{\Gamma_{n+1}})}{\psi \circ V (\overline{X}_{\Gamma_n})} \Big)^{1/s} \Big\vert \overline{X}_{\Gamma_n} \Big] \leqslant & \Big( 1 + \gamma_{n+1} \frac{\beta- \alpha \phi \circ V(\overline{X}_{\Gamma_n}) }{V(\overline{X}_{\Gamma_n})} \Big)^{1/s} \\
\leqslant & 1+\frac{\gamma_{n+1}(\beta-\alpha \phi \circ V (\overline{X}_{\Gamma_n}))}{sV(\overline{X}_{\Gamma_n})}.
\end{align*}
%
%
%
%
Now when $\beta \geqslant 0$, by $\mathcal{RC}_{Q,V}(\psi,\phi,\alpha,\beta) (ii)$ (see (\ref{hyp:incr_sg_Lyapunov})), there exists $\lambda \in (0,1)$ and $y_{\lambda} \in (0,+\infty)$ such that for every $y >y_{\lambda}$, then $\phi(y)\geqslant \beta /(\lambda \alpha)$. It follows that the Borel function $C_{\lambda,s}:[v_{\ast},+\infty)\to \mathbb{R}$, $y \mapsto C_{\lambda,s}(y):=y^{-1} \psi (y)^{1/s} (\beta-\lambda \alpha  \phi (y))$ is locally bounded on $[v_{\ast},+\infty)$ and non positive on $[y_{\lambda},+\infty)$, hence $\overline{C}_{\lambda,s}:=\sup_{y \in [v_{\ast}, +\infty)}  C_{\lambda,s}(y) <+\infty$. When $\beta<0$, since $\phi$ and $\psi$ are positive functions, then the function $C_{\lambda,s}$ is non positive and it follows that
\begin{align*}
\mathscr{Q}_{\gamma_{n+1}}(\psi \circ V )^{1/s} (\overline{X}_{\Gamma_n})\leqslant & (\psi \circ V )^{1/s} (\overline{X}_{\Gamma_n}) \\
&+\frac{\gamma_{n+1}}{s}(C_{\lambda,s} \circ V(\overline{X}_{\Gamma_n})-(1- \lambda) \alpha \tilde{V}_{\psi,\phi,s}(\overline{X}_{\Gamma_n})) ,
\end{align*}
which yields,
\begin{equation*}
\tilde{V}_{\psi,\phi,s}(\overline{X}_{\Gamma_n})\leqslant -\frac{s}{\alpha (1-\lambda)} \widetilde{A}_{\gamma_{n+1}}(\psi \circ V )^{1/s}(\overline{X}_{\Gamma_n})+\frac{\overline{C}_{\lambda,s} \vee 0}{\alpha(1-\lambda)}.
\end{equation*}
Consequently (\ref{eq:tightness_gen}) follows from (\ref{eq:invariance_mes_emp_Lyap_gen}). The tightness of $(\nu_n^{\eta})_{n \in \mathbb{N}^{\ast}}$ is a immediate consequence of (\ref{eq:tightness_gen}) since $\lim\limits_{ x \to \infty}\tilde{V}_{\psi,\phi,s}(x)=+\infty$.
%
%
\end{proof}
\subsubsection{Identification of the limit}
In Theorem \ref{th:tightness}, we obtained the tightness of $(\nu_n^{\eta})_{n \in \mathbb{N}^{\ast}}$. It remains to show that every limiting point of this sequence is an invariant distribution of the Feller process with infinitesimal generator $A$. This is the interest of the following Theorem which relies on the infinitesimal generator approximation.
\begin{mytheo}
\label{th:identification_limit}
Let $\rho \in [1,2]$. We have the following properties:
\begin{enumerate}[label=\textbf{\Alph*.}]
\item\label{th:identification_limit_A} 
Let $\DomA_0 \subset \DomA$, with  $\DomA_0$ dense in $\mathcal{C}_0(E)$. We assume that $\widetilde{A}_{\gamma_n}f$ exists for every $f \in \DomA_0$ and every $n \in \mathbb{N}^{\ast}$. Also assume that there exists $g:E \to \mathbb{R}_+$ a Borel function and $\epsilon_{\mathcal{I}} : \mathbb{R}_+ \to \mathbb{R}_+$ an increasing function such that $\mathcal{GC}_{Q}(\DomA_0,g,\rho ,\epsilon_{\mathcal{I}}) $ (see (\ref{hyp:incr_X_Lyapunov})) and $\mathcal{S}\mathcal{W}_{\mathcal{I}, \gamma,\eta}( g,\rho,\epsilon_{\mathcal{I}}) $ (see (\ref{hyp:step_weight_I_gen_chow})) hold and that
 \begin{equation}
 \label{hyp:accroiss_sw_series_2}  \lim\limits_{n \to + \infty} \frac{1}{H_n} \sum_{k =1}^{n} \vert \eta_{k+1}/\gamma_{k+1}-\eta_k /\gamma_k \vert = 0.
\end{equation} Then
\begin{equation}
\label{hyp:identification_limit}
\mathbb{P} \mbox{-a.s.} \quad \forall f \in \DomA_0, \qquad   \lim\limits_{n \to + \infty}  \frac{1}{H_n} \sum_{k=1}^n \eta_k \widetilde{A}_{\gamma_k}f (\overline{X}_{\Gamma_{k-1}})=0.
\end{equation}
\item \label{th:identification_limit_B} 
We assume that (\ref{hyp:identification_limit}) and $\mathcal{E}(\widetilde{A},A,\DomA_0) $ (see (\ref{hyp:erreur_tems_cours_fonction_test_reg})) hold. Then
\begin{align*}
\mathbb{P} \mbox{-a.s.} \quad \forall f \in \DomA_0, \qquad     \lim\limits_{n \to + \infty} \nu_n^{\eta}( Af )=0.
\end{align*}
It follows that, $\mathbb{P}-a.s.$, every weak limiting distribution $\nu^{\eta}_{\infty}$ of the sequence $(\nu_n^{\eta})_{n \in \mathbb{N}^{\ast}}$ belongs to $\mathcal{V}$, the set of the invariant distributions of $(X_t)_{t \geqslant 0}$. Finally, if the hypothesis from Theorem \ref{th:tightness} point \ref{th:tightness_point_B} hold and $(X_t)_{t \geqslant 0}$ has a unique invariant distribution, $i.e.$ $\mathcal{V}=\{\nu\}$, then 
\begin{align}
\label{eq:test_function_gen_cv}
\mathbb{P} \mbox{-a.s.} \quad \forall f \in \mathcal{C}_{\tilde{V}_{\psi,\phi,s}}(E), \quad \lim\limits_{n \to + \infty} \nu_n^{\eta}(f)=\nu(f),
\end{align}
 with $\mathcal{C}_{\tilde{V}_{\psi,\phi,s}}(E)$ defined in (\ref{def:espace_test_function_cv}).
\end{enumerate}
\end{mytheo}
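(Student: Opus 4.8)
The plan is to treat the two points in order, using the decomposition already introduced in the proof of Theorem \ref{th:tightness_point_A}. For point \ref{th:identification_limit_A}, fix $f \in \DomA_0$. I would write, exactly as in the tightness proof,
\begin{align*}
\frac{1}{H_n}\sum_{k=1}^n \eta_k \widetilde{A}_{\gamma_k}f(\overline{X}_{\Gamma_{k-1}})
&= \frac{1}{H_n}\sum_{k=1}^n \frac{\eta_k}{\gamma_k}\big(f(\overline{X}_{\Gamma_k})-f(\overline{X}_{\Gamma_{k-1}})\big)\\
&\quad - \frac{1}{H_n}\sum_{k=1}^n \frac{\eta_k}{\gamma_k}\big(f(\overline{X}_{\Gamma_k})-\mathscr{Q}_{\gamma_k}f(\overline{X}_{\Gamma_{k-1}})\big).
\end{align*}
The second (martingale) sum is handled as before: $\mathcal{GC}_{Q}(\DomA_0,g,\rho,\epsilon_{\mathcal{I}})$ together with $\mathcal{S}\mathcal{W}_{\mathcal{I},\gamma,\eta}(g,\rho,\epsilon_{\mathcal{I}})$ gives $a.s.$ finiteness of $\sum_n (\eta_n/(\gamma_n H_n))^\rho \,\mathbb{E}[|f(\overline{X}_{\Gamma_n})-\mathscr{Q}_{\gamma_n}f(\overline{X}_{\Gamma_{n-1}})|^\rho\mid \overline{X}_{\Gamma_{n-1}}]$, so Chow's theorem (Theorem \ref{th:chow}) gives $a.s.$ convergence of the associated martingale, and then Kronecker's lemma (Lemma \ref{lemme:Kronecker}) forces the normalized sum to $0$. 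The first sum is the new ingredient: here I would use that $f$ is \emph{bounded} (it lies in $\mathcal{C}_0(E)\supset\DomA_0$), do an Abel summation to get the boundary terms $\tfrac{\eta_1}{H_n\gamma_1}f(\overline{X}_0)-\tfrac{\eta_n}{H_n\gamma_n}f(\overline{X}_{\Gamma_n})$ plus $\tfrac1{H_n}\sum_{k=2}^n(\eta_k/\gamma_k-\eta_{k-1}/\gamma_{k-1})f(\overline{X}_{\Gamma_{k-1}})$. The boundary terms vanish because $\|f\|_\infty<\infty$, $H_n\to\infty$, and $\eta_n/(H_n\gamma_n)\to 0$ (which follows from \eqref{hyp:accroiss_sw_series_2} applied telescopically, or can be assumed among the running hypotheses); the Abel remainder is bounded by $\|f\|_\infty \cdot \tfrac1{H_n}\sum_{k}|\eta_{k+1}/\gamma_{k+1}-\eta_k/\gamma_k|$, which tends to $0$ precisely by \eqref{hyp:accroiss_sw_series_2}. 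This proves \eqref{hyp:identification_limit}.

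For point \ref{th:identification_limit_B}, I would combine \eqref{hyp:identification_limit} with the approximation hypothesis $\mathcal{E}(\widetilde{A},A,\DomA_0)$. Writing $\nu_n^\eta(Af) = \tfrac1{H_n}\sum_{k=1}^n \eta_k Af(\overline{X}_{\Gamma_{k-1}})$, I add and subtract $\widetilde{A}_{\gamma_k}f$:
\begin{align*}
|\nu_n^\eta(Af)| \leqslant \Big|\frac{1}{H_n}\sum_{k=1}^n \eta_k \widetilde{A}_{\gamma_k}f(\overline{X}_{\Gamma_{k-1}})\Big| + \frac{1}{H_n}\sum_{k=1}^n \eta_k \Lambda_f(\overline{X}_{\Gamma_{k-1}},\gamma_k).
\end{align*}
The first term goes to $0$ by \eqref{hyp:identification_limit}. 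For the second, using the representation $\Lambda_f(x,\gamma)=\langle g(x),\tilde{\mathbb{E}}[\tilde\Lambda_f(x,\gamma,\tilde\omega)]\rangle$, I would bound it, coordinate by coordinate, by $\tilde{\mathbb{E}}\big[\tfrac1{H_n}\sum_{k=1}^n \eta_k\, g_i(\overline{X}_{\Gamma_{k-1}})\,\tilde\Lambda_{f,i}(\overline{X}_{\Gamma_{k-1}},\gamma_k,\tilde\omega)\big]$ after a Fubini argument (justified by the uniform integrability bound $\sup_i\tilde{\mathbb{E}}[\sup_{x,\gamma}\tilde\Lambda_{f,i}]<\infty$ together with $\sup_n\nu_n^\eta(g_i)<\infty$ $a.s.$). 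Then for $\tilde{\mathbb P}$-a.e.\ $\tilde\omega$ I would show the inner Cesàro-type average $\nu_n^\eta\big(g_i(\cdot)\tilde\Lambda_{f,i}(\cdot,\gamma,\tilde\omega)\big)\to 0$: split $E$ into a large compact $K$ and its complement. On $K^c$ the factor $\tilde\Lambda_{f,i}$ is uniformly small for $\gamma\leqslant\underline\gamma(\tilde\omega)$ (hypothesis \ref{hyp:erreur_tems_cours_fonction_test_reg_Lambda_representation_1}(ii)), and since $\gamma_n\to 0$ only finitely many $\gamma_k$ exceed $\underline\gamma(\tilde\omega)$ — their contribution to the normalized sum vanishes because $H_n\to\infty$; on $K$, $\sup_{x\in K}\tilde\Lambda_{f,i}(x,\gamma_k,\tilde\omega)\to 0$ as $k\to\infty$ (hypothesis (i)), and $\sup_n\nu_n^\eta(g_i)<\infty$, so another Cesàro/Toeplitz argument closes it; case \ref{hyp:erreur_temps_cours_fonction_test_reg_Lambda_representation_2} is even more direct since $\sup_x \tilde\Lambda_{f,i}(x,\gamma,\tilde\omega)g_i(x)\to 0$ bounds everything uniformly. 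Hence $\nu_n^\eta(Af)\to 0$ $a.s.$ for all $f\in\DomA_0$.

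Finally, for the identification of limiting distributions: by Theorem \ref{th:tightness_point_B}, $(\nu_n^\eta)$ is $a.s.$ tight, so along a subsequence $\nu_{n_j}^\eta\Rightarrow\nu_\infty^\eta$; since each $f\in\DomA_0\subset\mathcal{C}_0(E)$ is bounded continuous, $\nu_{n_j}^\eta(Af)\to\nu_\infty^\eta(Af)$, and the left side $\to 0$, so $\nu_\infty^\eta(Af)=0$ for all $f\in\DomA_0$; because $\DomA_0$ is dense in $\mathcal{C}_0(E)$ and $A$ is closed/$\|A\cdot\|_\infty$-bounded on $\DomA_0$, this extends to all $f\in\DomA$, and Theorem \ref{th:eche_weiss_feller} identifies $\nu_\infty^\eta\in\mathcal V$. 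When $\mathcal V=\{\nu\}$, every subsequential limit equals $\nu$, so $\nu_n^\eta\Rightarrow\nu$ $a.s.$, giving convergence for bounded continuous test functions; upgrading to all $f\in\mathcal{C}_{\tilde V_{\psi,\phi,s}}(E)$ uses the uniform moment bound $\sup_n\nu_n^\eta(\tilde V_{\psi,\phi,s})<\infty$ from \eqref{eq:tightness_gen} — a standard uniform-integrability truncation argument: for $f$ with $|f|=o(\tilde V_{\psi,\phi,s})$, split $f = f\mathbf{1}_{\{\tilde V_{\psi,\phi,s}\leqslant R\}} + f\mathbf{1}_{\{\tilde V_{\psi,\phi,s}> R\}}$, control the tail uniformly in $n$ by $\varepsilon(R)\sup_n\nu_n^\eta(\tilde V_{\psi,\phi,s})$, and apply weak convergence to the bounded truncated part. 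The main obstacle I anticipate is the measure-theoretic bookkeeping in point \ref{th:identification_limit_B}: interchanging $\tilde{\mathbb E}$ with the $a.s.$ limit in $n$ and making the compact/non-compact split uniform over the (random) step sequence — the conditions \ref{hyp:erreur_tems_cours_fonction_test_reg_Lambda_representation_1}–\ref{hyp:erreur_temps_cours_fonction_test_reg_Lambda_representation_2} are designed exactly for this, but assembling them cleanly with the $a.s.$ finiteness of $\sup_n\nu_n^\eta(g_i)$ requires care.
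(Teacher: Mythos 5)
Your proposal follows essentially the same route as the paper: the identical two-term decomposition with Abel summation plus Chow/Kronecker for the martingale part in point \ref{th:identification_limit_A}, and in point \ref{th:identification_limit_B} the bound by $\Lambda_f$, the representation $\Lambda_f=\langle g,\tilde{\mathbb{E}}[\tilde\Lambda_f]\rangle$, the compact/complement split under \ref{hyp:erreur_tems_cours_fonction_test_reg_Lambda_representation_1}, dominated convergence in $\tilde\omega$, and the truncation argument for $\mathcal{C}_{\tilde V_{\psi,\phi,s}}(E)$, so the argument is correct in substance. The one step to fix is your final identification: asserting that $\nu_\infty^\eta(Af)=0$ extends from $\DomA_0$ to all of $\DomA$ because ``$A$ is closed/$\Vert A\cdot\Vert_\infty$-bounded on $\DomA_0$'' is not justified ($A$ is unbounded, and density of $\DomA_0$ in $\mathcal{C}_0(E)$ alone does not make it a core for $A$); this extension is also unnecessary, since, as the paper does, one applies Theorem \ref{th:eche_weiss_feller} directly with $\DomA$ replaced by $\DomA_0$, whose hypotheses (positive maximum principle, density, the sequence $\varphi_n$) are inherited from the Feller generator.
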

In the particular case where the function $\psi$ is polynomial, (\ref{eq:test_function_gen_cv}) also reads as the $a.s.$ convergence of the empirical measures for some $\mbox{L}^p$-Wasserstein distances, $p>0$, that we will study further in this paper for some numerical schemes of some diffusion processes. From the liberty granted by the choice of $\psi$ in this abstract framework, where only a recursive control with mean reverting is required, we will also propose an application for functions $\psi$ with exponential growth.
\begin{proof}
We prove point \ref{th:identification_limit_A} We write 
\begin{align*}
 -\sum_{k=1}^n \eta_k \widetilde{A}_{\gamma_k}f(\overline{X}_{\Gamma_{k-1}}) = & -\sum_{k=1}^n  \frac{\eta_k}{\gamma_k}(f(\overline{X}_{\Gamma_k})-f(\overline{X}_{\Gamma_{k-1}})) \\
 &+  \sum_{k=1}^n \frac{\eta_k}{\gamma_k} (f(\overline{X}_{\Gamma_k})-\mathscr{Q}_{\gamma_k}f(\overline{X}_{\Gamma_{k-1}}) )
\end{align*}
We study the first term of the $r.h.s.$ We derive by an Abel transform that
\begin{align*}
 - \frac{1}{H_n}\sum_{k=1}^n  \frac{\eta_k}{\gamma_k}  (f(\overline{X}_{\Gamma_k})- f(\overline{X}_{\Gamma_{k-1}}))= & \frac{\eta_1}{H_n \gamma_1}f(\overline{X}_{0})-\frac{\eta_n}{H_n \gamma_n}f(\overline{X}_{\Gamma_n}) \\
 &+ \frac{1}{H_n}\sum_{k=2}^n \Big( \frac{\eta_k}{\gamma_k}-\frac{\eta_{k-1}}{\gamma_{k-1}} \Big)f(\overline{X}_{\Gamma_{k-1}}).\\
\end{align*}
Since $f$ is bounded and $\lim\limits_{n \to + \infty} \eta_n/(H_n \gamma_n) =0 $, we deduce that $\lim\limits_{n \to + \infty} \eta_n f(\overline{X}_{\Gamma_n})/(H_n \gamma_n) \overset{a.s.}{=}0 $ and, on the other hand, we deduce from (\ref{hyp:accroiss_sw_series_2}) that  
\begin{align*}
\lim\limits_{n \to + \infty} \frac{1}{H_n}\sum_{k=1}^n  \frac{\eta_k}{\gamma_k}(f(\overline{X}_{\Gamma_k})-f(\overline{X}_{\Gamma_{k-1}})) =0 .
\end{align*}
This completes the study of the first term. To treat the second term, the approach is quite similar to the one in the proof of Theorem \ref{th:tightness} point \ref{th:tightness_point_A} using $\mathcal{GC}_{Q}(\DomA,g,\rho ,\epsilon_{\mathcal{I}})$ (see (\ref{hyp:incr_X_Lyapunov})) with $\mathcal{S}\mathcal{W}_{\mathcal{I}, \gamma,\eta}( g,\rho,\epsilon_{\mathcal{I}}) $ (see (\ref{hyp:step_weight_I_gen_chow})). Details are left to the reader.
Now, we focus on the proof of point \ref{th:identification_limit_B} First we write
\begin{equation*}
 \frac{1}{H_n} \sum_{k=1}^n \eta_k \widetilde{A}_{\gamma_k}f (\overline{X}_{\Gamma_{k-1}} )-\nu_n^{\eta} (Af) =\frac{1}{H_n}\sum_{k=1}^n  \eta_k \big( \widetilde{A}_{\gamma_k}f(\overline{X}_{\Gamma_{k-1}})-A f(\overline{X}_{\Gamma_{k-1}}) \big).
\end{equation*}
 Now we use the short time approximation $\mathcal{E}(\widetilde{A},A,\DomA_0) $ (see (\ref{hyp:erreur_tems_cours_fonction_test_reg})) and it follows that,
\begin{align*}
\Big \vert \frac{1}{H_n}\sum_{k=1}^n  \eta_k (\widetilde{A}_{\gamma_k}f(\overline{X}_{\Gamma_{k-1}})-A f(\overline{X}_{\Gamma_{k-1}})) \Big \vert  \leqslant \frac{1}{H_n}\sum_{k=1}^n \eta_k  \Lambda_{f}(\overline{X}_{\Gamma_{k-1}},\gamma_k) .
\end{align*}
Moreover, we have the following decomposition:
\begin{align*}
\forall f \in \DomA_0, \forall x \in E , \forall \gamma \in [0,\overline{\gamma}], \qquad \Lambda_f(x,\gamma)= \langle g (  x ) ,\tilde{\mathbb{E}} [\tilde{\Lambda}_{f}(x,\gamma)]  \rangle_{\mathbb{R}^q}
\end{align*}
with $g : (E, \mathcal{B}(E))\to \mathbb{R}_+^{q}$, $q \in \mathbb{N}$, a locally bounded Borel measurable function and $\tilde{\Lambda}_{f}:(E\times \mathbb{R}_+ \times \tilde{\Omega}, \mathcal{B}(E) \otimes \mathcal{B}(\mathbb{R}_+) \otimes \tilde{\mathcal{G}}) \to \mathbb{R}_+^{q}$ a measurable function such that  $\sup_{i \in \{1,\ldots,q\} }  \tilde{\mathbb{E}}[\sup_{x \in E} \sup_{\gamma \in (0,\overline{\gamma}] }\tilde{\Lambda}_{f,i}(x,\gamma) ]< + \infty$. Since for every $i \in \{1,\ldots,q\}$, $\sup_{n \in \mathbb{N}^{\ast}} \nu_n^{\eta}( g_i ,\omega )< + \infty$, $\mathbb{P}(d\omega)-a.s.$, the $\mathbb{P}(d\omega)-a.s.$ convergence of $\frac{1}{H_n}\sum_{k=1}^n \eta_k  \Lambda_{f}(\overline{X}_{\Gamma_{k-1}},\gamma_k) $ towards zero for every $f \in \DomA_0$, will follow from the following result: Let $(\overline{x}_{n})_{n \in \mathbb{N}} \in E^{\otimes \mathbb{N}} $. If 
\begin{align*}
\sup_{i \in \{1,\ldots,q \}}\sup_{n \in \mathbb{N}^{\ast}} \frac{1}{H_n}\sum_{k=1}^n \eta_k  g_i(\overline{x}_{{k-1}})<+ \infty,
\end{align*} then, for every $f \in \DomA_0$, $\lim\limits_{n\to +\infty} \frac{1}{H_n}\sum_{k=1}^n \eta_k  \Lambda_{f}(\overline{x}_{{k-1}},\gamma_k) =0$. In order to obtain this result, we first show that, for every $f \in \DomA_0$, every $i \in \{1,\ldots,q \}$, and every $(\overline{x}_{n})_{n \in \mathbb{N}} \in E^{\otimes \mathbb{N}} $, then
\begin{equation*}
\tilde{\mathbb{P}}(d\tilde{\omega})-a.s. \quad\quad \lim\limits_{n \to + \infty} \frac{1}{H_n}\sum_{k=1}^n \eta_k  \tilde{\Lambda}_{f,i}(\overline{x}_{{k-1}},\gamma_k,\tilde{\omega}) g_i (\overline{x}_{{k-1}}) =0  ,
\end{equation*}
and the result will follow from the Dominated Convergence theorem since, for every $n \in \mathbb{N}^{\ast}$, 
\begin{align*}
 \frac{1}{H_n}\sum_{k=1}^n \eta_k  \tilde{\Lambda}_{f,i} & (\overline{x}_{{k-1}},\gamma_k,\tilde{\omega}) g_i (\overline{x}_{{k-1}}) \\
 &\leqslant \sup_{x \in E} \sup_{\gamma \in (0,\overline{\gamma}]}\tilde{\Lambda}_{f,i} (x,\gamma,\tilde{\omega})    \sup_{n \in \mathbb{N}^{\ast}} \frac{1}{H_n}\sum_{k=1}^n \eta_k  g_i(\overline{x}_{{k-1}})< +\infty .
\end{align*}
with $ \tilde{\mathbb{E}}  [  \sup_{x \in E} \sup_{\gamma \in (0,\overline{\gamma}]}\tilde{\Lambda}_{f,i} (x,\gamma,\tilde{\omega})   ]< +\infty$ and $\sup_{n \in \mathbb{N}^{\ast}} \frac{1}{H_n}\sum_{k=1}^n \eta_k  g_i(\overline{x}_{{k-1}})<+ \infty$.
%
%
%
We fix $f \in \DomA_0$, $i \in \{1,\ldots,q \}$ and $(\overline{x}_{n})_{n \in \mathbb{N}} \in E^{\otimes \mathbb{N}} $ and we assume that $\mathcal{E}(\widetilde{A},A,\DomA_0)$ \ref{hyp:erreur_tems_cours_fonction_test_reg_Lambda_representation_1} (see (\ref{hyp:erreur_temps_cours_fonction_test_reg_Lambda_representation_2_1})) holds for $\tilde{\Lambda}_{f,i}$ and $g_i$. If instead $\mathcal{E}(\widetilde{A},A,\DomA_0)$ \ref{hyp:erreur_temps_cours_fonction_test_reg_Lambda_representation_2} (see (\ref{hyp:erreur_temps_cours_fonction_test_reg_Lambda_representation_2_2})) is satisfied, the proof is similar but simpler so we leave it to the reader. 
By assumption $\mathcal{E}(\widetilde{A},A,\DomA_0) $ \ref{hyp:erreur_tems_cours_fonction_test_reg_Lambda_representation_1} (ii)(see (\ref{hyp:erreur_temps_cours_fonction_test_reg_Lambda_representation_2_2})), $\tilde{\mathbb{P}}(d\tilde{\omega})-a.s$, for every $R>0$, there exists $K_R(\tilde{\omega}) \in \mathcal{K}_E$ such that $ \sup_{x \in K_{R}^c(\tilde{\omega}) }\sup_{\gamma \in (0, \underline{\gamma}(\tilde{\omega}) ]} \tilde{\Lambda}_{f,i}(x, \gamma,\tilde{\omega})<1/R$.
Then from $\mathcal{E}(\widetilde{A},A,\DomA_0) $ \ref{hyp:erreur_tems_cours_fonction_test_reg_Lambda_representation_1} (i)(see (\ref{hyp:erreur_temps_cours_fonction_test_reg_Lambda_representation_2_1})), we derive that, $\tilde{\mathbb{P}}(d\tilde{\omega})-a.s$, for every $R>0$, $\lim\limits_{n \to + \infty} \tilde{\Lambda}_{f,i}(\overline{x}_{{n-1}},\gamma_n ,\tilde{\omega} ) \mathds{1}_{  K_ R(\tilde{\omega}) }(\overline{x}_{{k-1}})=0, \;  $ Then, since $g_i $ is a locally bounded function, as an immediate consequence of the Cesaro's lemma, we obtain
\begin{align*}
\tilde{\mathbb{P}}(d \tilde{\omega})-a.s. & \quad \forall R>0, \\
&\lim\limits_{n \to + \infty} \frac{1}{H_n}\sum_{k=1}^n \eta_k  \tilde{\Lambda}_{f,i}(\overline{x}_{{k-1}},\gamma_k,\tilde{\omega}) g_i (\overline{x}_{{k-1}}) \mathds{1}_{   K_ R(\tilde{\omega}) }(\overline{x}_{{k-1}}) =0  
\end{align*}
Let $\underline{n}(\tilde{\omega}) :=\inf\{n \in \mathbb{N}^{\ast},\sup_{k \geqslant n} \gamma_{k} \leqslant \underline{\gamma}(\tilde{\omega}) \}$. By the assumption $\mathcal{E}(\widetilde{A},A,\DomA_0) $ \ref{hyp:erreur_tems_cours_fonction_test_reg_Lambda_representation_1} (ii) (see (\ref{hyp:erreur_temps_cours_fonction_test_reg_Lambda_representation_2_1})), we have, $\tilde{\mathbb{P}}(d\tilde{\omega})-a.s$, $ \lim\limits_{\vert x \vert  \to +\infty} \sup_{n \geqslant \underline{n}(\tilde{\omega}) }\tilde{\Lambda}_{f,i}(x,\gamma_n,\tilde{\omega} )=0, $ Moreover,
\begin{align*}
 \sup_{n \geqslant \underline{n}(\tilde{\omega}) } \frac{1}{H_n}\sum_{k=\underline{n}(\tilde{\omega}) }^n \eta_k   \tilde{\Lambda}_{f,i}& (\overline{x}_{{k-1}},\gamma_k,\tilde{\omega})  g (\overline{x}_{{k-1}})  \mathds{1}_{K_{R}^c(\tilde{\omega})}(\overline{x}_{{k-1}}) \\
 \leqslant &    \sup_{x \in K_{R}^c(\tilde{\omega})} \sup_{\gamma \in (0,\underline{\gamma}(\tilde{\omega})]}  \tilde{\Lambda}_{f,i} (x,\gamma,\tilde{\omega})  \sup_{n \in \mathbb{N}^{\ast}} \frac{1}{H_n}\sum_{k=1}^n \eta_k  g_i(\overline{x}_{{k-1}}).
\end{align*}
We let $R$ tends to infinity and since $\sup_{n \in \mathbb{N}^{\ast}} \frac{1}{H_n}\sum_{k=1}^n \eta_k  g_i(\overline{x}_{{k-1}})< + \infty$, the $l.h.s.$ of the above equation converges $\tilde{\mathbb{P}}(d\tilde{\omega})-a.s.$ to 0. Finally, since $\underline{n}(\tilde{\omega}) $ is $\tilde{\mathbb{P}}(d\tilde{\omega})-a.s.$ finite, we also have
\begin{align*}
 \tilde{\mathbb{P}}(d \tilde{\omega})-a.s.& \quad \forall R>0, \\
& \lim_{n \to + \infty}\frac{1}{H_n}\sum_{k=1}^{\underline{n}(\tilde{\omega}) -1} \eta_k   \tilde{\Lambda}_{f,i}(\overline{x}_{{k-1}},\gamma_k,\tilde{\omega}) g (\overline{x}_{{k-1}})  \mathds{1}_{K_{R}^c(\tilde{\omega})}(\overline{x}_{{k-1}}) =0.
\end{align*}
Applying the same approach for every $i \in \{1,\ldots,q \}$, the Dominated Convergence Theorem yields:
\begin{align*}
\forall (\overline{x}_{n})_{n \in \mathbb{N}} \in E^{\otimes \mathbb{N}} , \forall f \in \DomA_0, \qquad \lim\limits_{n \to + \infty} \frac{1}{H_n}\sum_{k=1}^n  \eta_k \Lambda_{f}(\overline{x}_{{k-1}},\gamma_k) = 0.
\end{align*}
and since for every $i \in \{1,\ldots,q\}$, $\sup_{n \in \mathbb{N}^{\ast}} \nu_n^{\eta}( g_i ,\omega )< + \infty, \; \mathbb{P}(d\omega)-a.s.$, then
\begin{align*}
\mathbb{P}(d \omega)-a.s. \qquad \forall f \in \DomA_0, \quad \frac{1}{H_n}\sum_{k=1}^n \eta_k  (\widetilde{A}_{\gamma_k}f(\overline{X}_{\Gamma_{k-1}}) - Af(\overline{X}_{\Gamma_{k-1}}) ) =0.
\end{align*}
It follows that $\mathbb{P}(d \omega)-a.s.$, for every $f \in \DomA_0$, $\lim\limits_{n \to + \infty} \nu_n^{\eta}(Af)=0$. The conclusion follows from the Echeverria Weiss theorem (see Theorem \ref{th:eche_weiss_feller}). Simply notice that  we maintain the assumptions of this Theorem when $\DomA$ is replaced by $\DomA_0$, since $\DomA_0 \subset \DomA$ and $\DomA_0$ is dense in $\mathcal{C}_0(E)$.
\end{proof}
\subsection{About Growth control and Step Weight assumptions}
The following Lemma presents a $\mbox{L}_1$-finiteness property that we can obtain under recursive control hypothesis and strongly mean reverting assumptions ($\phi=I_d$). This result is thus useful to prove $\mathcal{S}\mathcal{W}_{\mathcal{I}, \gamma,\eta}(g, \rho , \epsilon_{\mathcal{I}})$ (see (\ref{hyp:step_weight_I_gen_chow})) or $\mathcal{S}\mathcal{W}_{\mathcal{II},\gamma,\eta}(F)  $ (see (\ref{hyp:step_weight_I_gen_tens})) for well chosen $F$ and $g$ in this specific situation.
\begin{lemme}
\label{lemme:mom_psi_V}
Let $v_{\ast}>0$, $V:E \to [v_{\ast},\infty) $, $\psi:[v_{\ast},\infty) \to \mathbb{R}_+, $ such that $\widetilde{A}_{\gamma_n}\psi \circ V$ exists for every $n \in \mathbb{N}^{\ast}$. Let $\alpha>0$ and $\beta \in \mathbb{R}$. We assume that $\mathcal{RC}_{Q,V}(\psi,I_d,\alpha,\beta)$ (see (\ref{hyp:incr_sg_Lyapunov})) holds and that $\mathbb{E}[\psi\circ V (\overline{X}_{\Gamma_{n_0}})]< + \infty$ for every $n_0 \in \mathbb{N}^{\ast}$. Then
\begin{align}
\label{eq:mom_psi_V}
\sup_{n \in \mathbb{N}} \mathbb{E}[\psi \circ V(\overline{X}_{\Gamma_n})] < + \infty
\end{align}
In particular, let $\rho \in [1,2]$ and $\epsilon_{\mathcal{I}} : \mathbb{R}_+ \to \mathbb{R}_+$, an increasing function. It follows that if $\sum_{n=1}^{\infty} \Big \vert \frac{\eta_n }{H_n \gamma_n } \Big \vert^{\rho} \epsilon_{\mathcal{I}}(\gamma_n)   < + \infty$, then $\mathcal{S}\mathcal{W}_{\mathcal{I}, \gamma,\eta}(\psi \circ V, \rho , \epsilon_{\mathcal{I}}) $ holds and if $ \sum_{n=0}^{\infty} \frac{(\eta_{n+1} /\gamma_{n+1}-\eta_n /\gamma_n)_+ }{H_{n+1} } < + \infty$, then $\mathcal{S}\mathcal{W}_{\mathcal{II},\gamma,\eta}(\psi \circ V) $ is satisfied
\end{lemme}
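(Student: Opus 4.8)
The plan is to derive a one–step mean–reverting recursion for the sequence $u_n:=\mathbb{E}[\psi\circ V(\overline{X}_{\Gamma_n})]$ (each $u_n$ being finite by hypothesis) and then to read off (\ref{eq:mom_psi_V}) from it. To set up the recursion, recall that by definition of the pseudo-generator $\mathscr{Q}_{\gamma_{n+1}}\psi\circ V=\psi\circ V+\gamma_{n+1}\widetilde{A}_{\gamma_{n+1}}\psi\circ V$, and that $\mathscr{Q}_{\gamma_{n+1}}\psi\circ V(\overline{X}_{\Gamma_n})=\mathbb{E}[\psi\circ V(\overline{X}_{\Gamma_{n+1}})\,|\,\overline{X}_{\Gamma_n}]$ since $\mathbb{P}(\overline{X}_{\Gamma_{n+1}}\in dy\,|\,\overline{X}_{\Gamma_n})=\mathscr{Q}_{\gamma_{n+1}}(\overline{X}_{\Gamma_n},dy)$. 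Applying $\mathcal{RC}_{Q,V}(\psi,I_d,\alpha,\beta)(i)$ (so that $\phi\circ V=V$) for $n+1\geqslant n_0$ yields, $\mathbb{P}$-a.s.,
\[
\mathbb{E}\big[\psi\circ V(\overline{X}_{\Gamma_{n+1}})\,\big|\,\overline{X}_{\Gamma_n}\big]\leqslant \psi\circ V(\overline{X}_{\Gamma_n})+\gamma_{n+1}\,h\big(V(\overline{X}_{\Gamma_n})\big),\qquad h(y):=\psi(y)\big(\beta/y-\alpha\big).
\]

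The key point is then to tame the drift $h$. Since $\beta/y\to 0$ as $y\to\infty$, there is $y_0\in[v_\ast,\infty)$ with $\beta/y-\alpha\leqslant-\alpha/2$ for $y\geqslant y_0$, so that $h(y)\leqslant-\tfrac{\alpha}{2}\psi(y)$ on $[y_0,\infty)$; on the compact $[v_\ast,y_0]$, $\psi$ being locally bounded, $K_0:=\sup_{y\in[v_\ast,y_0]}\psi(y)(\beta/y-\alpha/2)$ is finite, and setting $K:=K_0\vee 0\geqslant 0$ we obtain $h(y)\leqslant K-\tfrac{\alpha}{2}\psi(y)$ for every $y\geqslant v_\ast$. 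Inserting this into the previous display and taking expectations gives, for $n+1\geqslant n_0$,
\[
u_{n+1}\leqslant\big(1-\tfrac{\alpha}{2}\gamma_{n+1}\big)u_n+K\gamma_{n+1}.
\]

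Because $\gamma_n\to 0$, one may pick $n_1\geqslant n_0$ with $0<1-\tfrac{\alpha}{2}\gamma_n<1$ for $n\geqslant n_1$; rewriting the recursion as $u_{n+1}-\tfrac{2K}{\alpha}\leqslant(1-\tfrac{\alpha}{2}\gamma_{n+1})(u_n-\tfrac{2K}{\alpha})$ and distinguishing the sign of $u_n-\tfrac{2K}{\alpha}$ shows by induction on $n\geqslant n_1$ that $u_n\leqslant\max(u_{n_1},\tfrac{2K}{\alpha})$ (if $u_n\geqslant\tfrac{2K}{\alpha}$ then $u_{n+1}\leqslant u_n$; if $u_n<\tfrac{2K}{\alpha}$ then $u_{n+1}<\tfrac{2K}{\alpha}$). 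Together with the finiteness of $u_0,\dots,u_{n_1-1}$ this gives $\sup_n u_n<+\infty$, i.e. (\ref{eq:mom_psi_V}). The two step-weight statements then follow by Tonelli's theorem: all summands being nonnegative, $\mathbb{E}\big[\sum_{n\geqslant 1}|\tfrac{\eta_n}{H_n\gamma_n}|^{\rho}\epsilon_{\mathcal{I}}(\gamma_n)\psi\circ V(\overline{X}_{\Gamma_n})\big]\leqslant(\sup_m u_m)\sum_{n\geqslant 1}|\tfrac{\eta_n}{H_n\gamma_n}|^{\rho}\epsilon_{\mathcal{I}}(\gamma_n)$, which is finite under the assumed summability, so the series is $\mathbb{P}$-a.s. finite and $\mathcal{S}\mathcal{W}_{\mathcal{I},\gamma,\eta}(\psi\circ V,\rho,\epsilon_{\mathcal{I}})$ holds; the same computation with $\tfrac{(\eta_{n+1}/\gamma_{n+1}-\eta_n/\gamma_n)_+}{H_{n+1}}$ in place of $|\tfrac{\eta_n}{H_n\gamma_n}|^{\rho}\epsilon_{\mathcal{I}}(\gamma_n)$ yields $\mathcal{S}\mathcal{W}_{\mathcal{II},\gamma,\eta}(\psi\circ V)$.

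The only genuinely delicate point is the second step: the bound furnished by $\mathcal{RC}_{Q,V}(\psi,I_d,\alpha,\beta)(i)$ still carries the possibly positive constant $\beta$, and it must be converted into an honest contraction $u_{n+1}\leqslant(1-\tfrac{\alpha}{2}\gamma_{n+1})u_n+K\gamma_{n+1}$ with a bounded remainder — this is precisely where the strong mean reversion $\phi=I_d$, the nonnegativity of $\psi$ and its local boundedness are used. Everything afterwards is the routine Robbins–Siegmund–type bounded-sequence argument, and the passage to the $a.s.$ statements is just Tonelli combined with the uniform moment bound.
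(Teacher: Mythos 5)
Your proof is correct and follows essentially the same route as the paper: the one-step inequality from $\mathcal{RC}_{Q,V}(\psi,I_d,\alpha,\beta)$, the splitting of the drift $\psi(y)(\beta/y-\alpha)$ into a bounded constant minus a positive multiple of $\psi$ (your $K$ and $\alpha/2$ are exactly the paper's $\overline{C}_{\lambda}$ and $(1-\lambda)\alpha$ with $\lambda=1/2$), and the same induction giving $u_n\leqslant u_{n_1}\vee 2K/\alpha$. Your explicit Tonelli step for the two step-weight conclusions, which the paper leaves implicit, is the standard argument and adds nothing controversial.
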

 \begin{proof} 
First, we deduce from $\mathcal{RC}_{Q,V}(\psi,I_d,\alpha,\beta) (i)$ that there exists $n_0 \in \mathbb{N}$ such that for $n \geqslant n_0$, $\mathcal{RC}_{Q,V}(\psi,I_d,\alpha,\beta)$ can be rewritten
 \begin{align*}
 \mathbb{E}[\psi \circ V(\overline{X}_{\Gamma_{n+1}}) \vert \overline{X}_{\Gamma_n} ]   \leqslant &\psi \circ V(\overline{X}_{\Gamma_n} )+  \gamma_{n+1}\frac{ \psi \circ V(\overline{X}_{\Gamma_n} )}{V(\overline{X}_{\Gamma_n} )} (\beta-\alpha  V(\overline{X}_{\Gamma_n}) )
 \end{align*}
Now, let $\lambda \in (0,1)$ and $y_{\lambda} = \beta /(\lambda \alpha)$. It follows that the Borel function $C_{\lambda}:[v_{\ast},+\infty)\to \mathbb{R}$, $y \mapsto C_{\lambda}(y):=y^{-1} \psi (y)(\beta-\lambda \alpha  y)$ is locally bounded on $[v_{\ast},+\infty)$ and non positive on $[y_{\lambda},+\infty)$, hence $\overline{C}_{\lambda}:=\sup_{y \in [v_{\ast}, y_{\lambda})}  C_{\lambda}(y) <+\infty$ and
\begin{align*}
 \mathbb{E}[\psi \circ V(X_{\Gamma_{n+1}}) \vert X_{\Gamma_n} ]   \leqslant & \psi \circ V  (\overline{X}_{\Gamma_n})+\gamma_{n+1}(C_{\lambda} \circ V(\overline{X}_{\Gamma_n})-(1- \lambda)\alpha \psi \circ V(\overline{X}_{\Gamma_n})) ,\\
  \leqslant &  \psi \circ V(\overline{X}_{\Gamma_n})(1- \gamma_{n+1}(1- \lambda) \alpha) + \gamma_{n+1} \overline{C}_{\lambda}.
\end{align*}
Applying a simple induction we deduce that $\mathbb{E}[ \psi \circ V (X_{\Gamma_n}) ] \leqslant \mathbb{E}[ \psi \circ V (X_{n_0}) ]  \vee \frac{ \overline{C}_{\lambda}}{(1- \lambda) \alpha}$.
\end{proof}
Now, we provide a general way to obtain $\mathcal{S}\mathcal{W}_{\mathcal{I}, \gamma,\eta}(g,\rho, \epsilon_{\mathcal{I}})$ and $\mathcal{S}\mathcal{W}_{\mathcal{I}\mathcal{I},\gamma,\eta}(F)$  for some specific $g$ and $F$ as soon as a recursive control with weakly mean reversion assumption holds.
\begin{lemme}
\label{lemme:mom_V}
Let $v_{\ast}>0$, $V:E \to [v_{\ast},\infty) $, $\psi, \phi :[v_{\ast},\infty) \to \mathbb{R}_+, $ such that $\widetilde{A}_{\gamma_n}\psi \circ V$ exists for every $n \in \mathbb{N}^{\ast}$. Let $\alpha>0$ and $\beta \in \mathbb{R}$. We also introduce the non-increasing sequence $(\theta_n)_{n \in \mathbb{N}^{\ast}}$ such that $\sum_{n \geqslant 1} \theta_n \gamma_n < + \infty$. We assume that $\mathcal{RC}_{Q,V}(\psi,\phi,\alpha,\beta)$ (see (\ref{hyp:incr_sg_Lyapunov})) holds and that $\mathbb{E}[\psi\circ V (\overline{X}_{\Gamma_{n_0}})]< + \infty$ for every $n_0 \in \mathbb{N}^{\ast}$. Then
\begin{equation*}
\sum_{n=1}^{\infty} \theta_n \gamma_n \mathbb{E} [\tilde{V}_{\psi,\phi,1}(\overline{X}_{\Gamma_{n-1}}) ] < + \infty
\end{equation*}
 with $ \tilde{V}_{\psi,\phi,1}$ defined in (\ref{def:espace_test_function_cv}). In particular, let $\rho \in [1,2]$ and $\epsilon_{\mathcal{I}} : \mathbb{R}_+ \to \mathbb{R}_+$, an increasing function. If we also assume 
 \begin{align}
 \label{hyp:step_weight_I}
\mathcal{S}\mathcal{W}_{\mathcal{I}, \gamma,\eta}(\rho, \epsilon_{\mathcal{I}})  \quad \equiv \qquad & \Big( \gamma_n^{-1} \epsilon_{\mathcal{I}}(\gamma_n) \big( \frac{\eta_n }{H_n \gamma_n } \big)^{\rho} \Big)_{n \in \mathbb{N}^{\ast}} \mbox{ is non-increasing and } \nonumber \\
& \sum_{n=1}^{\infty} \Big( \frac{\eta_n }{H_n \gamma_n } \Big)^{\rho} \epsilon_{\mathcal{I}}(\gamma_n) < + \infty,
 \end{align}
 then we have $\mathcal{S}\mathcal{W}_{\mathcal{I}, \gamma,\eta}(\tilde{V}_{\psi,\phi,1},\rho,\epsilon_{\mathcal{I}}) $ (see (\ref{hyp:step_weight_I_gen_chow})). Finally,if
  \begin{align}
 \label{hyp:step_weight_II}
\mathcal{S}\mathcal{W}_{\mathcal{II},\gamma,\eta}  \quad \equiv \qquad &\Big( \frac{ \frac{\eta_{n+1} }{(\gamma_{n+1}}-\frac{\eta_n}{\gamma_n})_+ }{ \gamma_n H_n }  \Big)_{n \in \mathbb{N}^{\ast}} \mbox{ is non-increasing and } \nonumber \\
&\sum_{n=1}^{\infty}  \frac{(\eta_{n+1} /\gamma_{n+1}-\eta_n /\gamma_n)_+ }{H_n }  < + \infty,
 \end{align}
  then we have $\mathcal{S}\mathcal{W}_{\mathcal{II},\gamma,\eta}( \tilde{V}_{\psi,\phi,1}) $ (see (\ref{hyp:step_weight_I_gen_tens})).
\end{lemme}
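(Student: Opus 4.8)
The plan is to turn the recursive control $\mathcal{RC}_{Q,V}(\psi,\phi,\alpha,\beta)\,(i)$ into a one-step recursion on $u_n:=\mathbb{E}[\psi\circ V(\overline{X}_{\Gamma_n})]$ and then to sum it against the weight $(\theta_n)$, using the monotonicity of $(\theta_n)$ to telescope; note that, unlike in Lemma~\ref{lemme:mom_psi_V}, we work here in a \emph{weakly} mean reverting regime, so no uniform bound $\sup_n u_n<+\infty$ is available. All the $u_n$ are finite by the standing hypothesis $\mathbb{E}[\psi\circ V(\overline{X}_{\Gamma_{n_0}})]<+\infty$, $n_0\in\mathbb{N}^{\ast}$. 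First I would rewrite $\mathcal{RC}_{Q,V}(\psi,\phi,\alpha,\beta)\,(i)$ in conditional form, using $\mathbb{E}[h(\overline{X}_{\Gamma_n})\,|\,\overline{X}_{\Gamma_{n-1}}]=\mathscr{Q}_{\gamma_n}h(\overline{X}_{\Gamma_{n-1}})$ for Borel $h\geqslant0$ together with $\mathscr{Q}_{\gamma_n}(\psi\circ V)=\psi\circ V+\gamma_n\widetilde{A}_{\gamma_n}(\psi\circ V)$, as
\[ \mathbb{E}\big[\psi\circ V(\overline{X}_{\Gamma_n})\,\big|\,\overline{X}_{\Gamma_{n-1}}\big]\leqslant \psi\circ V(\overline{X}_{\Gamma_{n-1}})+\gamma_n\,\frac{\psi\circ V(\overline{X}_{\Gamma_{n-1}})}{V(\overline{X}_{\Gamma_{n-1}})}\big(\beta-\alpha\,\phi\circ V(\overline{X}_{\Gamma_{n-1}})\big)\qquad\text{(for $n$ large).} \]
Exactly as in the proof of Theorem~\ref{th:tightness} point~\ref{th:tightness_point_B}, I would then fix $\lambda\in(0,1)$ and, using $\mathcal{RC}_{Q,V}(\psi,\phi,\alpha,\beta)\,(ii)$ (the case $\beta<0$ being automatic, since $\phi>0$), observe that $C_\lambda(y):=y^{-1}\psi(y)(\beta-\lambda\alpha\phi(y))$ has $\overline{C}_\lambda:=\sup_{y\geqslant v_{\ast}}C_\lambda(y)<+\infty$; since $\frac{\psi(y)}{y}(\beta-\alpha\phi(y))=C_\lambda(y)-(1-\lambda)\alpha\,\frac{\phi(y)\psi(y)}{y}$, substituting and taking expectations gives, for $n$ large,
\[ (1-\lambda)\alpha\,\gamma_n\,\mathbb{E}\big[\tilde{V}_{\psi,\phi,1}(\overline{X}_{\Gamma_{n-1}})\big]\leqslant u_{n-1}-u_n+\gamma_n\,\overline{C}_\lambda. \]

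Still following Theorem~\ref{th:tightness} point~\ref{th:tightness_point_B}, using $\mathscr{Q}_{\gamma_{n_0}}(\psi\circ V)\geqslant0$ in the one-step bound taken at the fixed index $n_0$ gives the pointwise domination $\tilde{V}_{\psi,\phi,1}\leqslant \frac{1}{(1-\lambda)\alpha\gamma_{n_0}}\,\psi\circ V+\frac{\overline{C}_\lambda\vee 0}{(1-\lambda)\alpha}$ on $E$, whence $\mathbb{E}[\tilde{V}_{\psi,\phi,1}(\overline{X}_{\Gamma_m})]<+\infty$ for every $m$; in particular each term of the target series is finite and the finitely many terms with index $\leqslant n_0$ are harmless. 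I would then multiply the last display by $\theta_n\geqslant0$ and sum for $n_0\leqslant n\leqslant N$: an Abel transform of $\sum_n\theta_n(u_{n-1}-u_n)$, combined with $(\theta_n)$ nonincreasing and $(u_n)$ nonnegative, bounds that sum by $\theta_{n_0}u_{n_0-1}$, while $\sum_{n=n_0}^{N}\theta_n\gamma_n\,\overline{C}_\lambda\leqslant(\overline{C}_\lambda\vee0)\sum_{n\geqslant1}\theta_n\gamma_n<+\infty$. Letting $N\to+\infty$ and adding back the finite initial block yields $\sum_{n\geqslant1}\theta_n\gamma_n\,\mathbb{E}[\tilde{V}_{\psi,\phi,1}(\overline{X}_{\Gamma_{n-1}})]<+\infty$, which is the main assertion.

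For the two consequences I would feed the first part the appropriate auxiliary sequence and then pass from summability in $\mathbb{E}$ to $\mathbb{P}$-a.s.\ summability by Tonelli (all summands being nonnegative). For $\mathcal{S}\mathcal{W}_{\mathcal{I},\gamma,\eta}(\tilde{V}_{\psi,\phi,1},\rho,\epsilon_{\mathcal{I}})$, take $\theta_n:=\gamma_n^{-1}\epsilon_{\mathcal{I}}(\gamma_n)\big(\eta_n/(H_n\gamma_n)\big)^{\rho}$: it is nonincreasing and $\sum_n\theta_n\gamma_n=\sum_n\epsilon_{\mathcal{I}}(\gamma_n)\big(\eta_n/(H_n\gamma_n)\big)^{\rho}<+\infty$ by $\mathcal{S}\mathcal{W}_{\mathcal{I},\gamma,\eta}(\rho,\epsilon_{\mathcal{I}})$, so the first part yields $\mathbb{P}$-a.s.\ $\sum_n\big(\eta_n/(H_n\gamma_n)\big)^{\rho}\epsilon_{\mathcal{I}}(\gamma_n)\,\tilde{V}_{\psi,\phi,1}(\overline{X}_{\Gamma_{n-1}})<+\infty$. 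For $\mathcal{S}\mathcal{W}_{\mathcal{II},\gamma,\eta}(\tilde{V}_{\psi,\phi,1})$, take $\theta_n:=(\eta_{n+1}/\gamma_{n+1}-\eta_n/\gamma_n)_+/(\gamma_n H_n)$, which is nonincreasing with $\sum_n\theta_n\gamma_n=\sum_n(\eta_{n+1}/\gamma_{n+1}-\eta_n/\gamma_n)_+/H_n<+\infty$ by $\mathcal{S}\mathcal{W}_{\mathcal{II},\gamma,\eta}$, and conclude in the same way. (The $\pm1$ index shift between $\overline{X}_{\Gamma_{n-1}}$ and $\overline{X}_{\Gamma_n}$ in the defining series of these hypotheses is immaterial, and is handled as in the applications made of them via $\mathcal{GC}_{Q}$.)

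I expect the main obstacle to be the part already delicate in Theorem~\ref{th:tightness} point~\ref{th:tightness_point_B}: absorbing the non-sign-definite contribution $\beta\,\psi\circ V/V$ into $C_\lambda\circ V$ and checking $\overline{C}_\lambda<+\infty$ (for $\beta\geqslant0$ this rests on $\mathcal{RC}_{Q,V}(\psi,\phi,\alpha,\beta)\,(ii)$ and on local boundedness of $\psi,\phi$; for $\beta<0$ it is free). Everything else is a telescoping Abel summation against a monotone weight — which here plays precisely the role that the uniform moment bound of Lemma~\ref{lemme:mom_psi_V} plays in the strongly mean reverting setting — together with the routine passage from $\mathbb{E}$-summability to $\mathbb{P}$-a.s.\ finiteness.
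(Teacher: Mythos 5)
Your proof is correct and takes essentially the same route as the paper's: the same $\lambda$-splitting yielding $\overline{C}_{\lambda}<+\infty$ from $\mathcal{RC}_{Q,V}(\psi,\phi,\alpha,\beta)\,(ii)$ (with $\beta<0$ automatic), followed by the same telescoping against the non-increasing weight $(\theta_n)$ — the paper telescopes pointwise before taking expectations while you take expectations first and Abel-sum, an immaterial reordering — and the same choices of $\theta_n$ plus Tonelli for the two step-weight consequences. Your explicit justification of the finiteness of the initial terms (via the pointwise domination of $\tilde{V}_{\psi,\phi,1}$ by a multiple of $\psi\circ V$) and your remark on the index shift are in fact slightly more careful than the paper, which glosses over both points.
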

\begin{proof}
 Now when $\beta \geqslant 0$, by $\mathcal{RC}_{Q,V}(\psi,\phi,\alpha,\beta) (ii)$ (see (\ref{hyp:incr_sg_Lyapunov})), there exists $\lambda \in (0,1)$ and $y_{\lambda} \in (0,+\infty)$ such that for every $y >y_{\lambda}$, then $\phi(y)\geqslant \beta /(\lambda \alpha)$. It follows that the Borel function $C_{\lambda,s}:[v_{\ast},+\infty)\to \mathbb{R}$, $y \mapsto C_{\lambda,s}(y):=y^{-1} \psi (y) (\beta-\lambda \alpha  \phi (y))$ is locally bounded on $[v_{\ast},+\infty)$ and non positive on $[y_{\lambda},+\infty)$, hence $\overline{C}_{\lambda}:=\sup_{y \in [v_{\ast}, +\infty)}  C_{\lambda}(y) <+\infty$. When $\beta<0$, since $\phi$ and $\psi$ are positive functions, then the function $C_{\lambda}$ is non positive. Using the same approach as in the proof of Theorem \ref{th:tightness} point \ref{th:tightness_point_B}, we deduce that there exists $n_0 \in \mathbb{N}$  such that we have the following telescopic decomposition:
\begin{align*}
\forall n \geqslant n_0, \quad \theta_{n+1}\gamma_{n+1}\tilde{V}_{\psi,\phi,1}(\overline{X}_{\Gamma_n}) \leqslant &\theta_{n+1} \frac{  \psi \circ V(X_{\Gamma_n}) - \mathbb{E}[\psi \circ V(\overline{X}_{\Gamma_{n+1}}) \vert \overline{X}_{\Gamma_n} ] }{ \alpha(1- \lambda)} \\
&+\gamma_{n+1}\theta_{n+1}\frac{ \overline{C}_{\lambda}} {\alpha(1- \lambda)} \\
\leqslant &  \frac{ \theta_{n}  \psi \circ V(\overline{X}_{\Gamma_n}) - \theta_{n+1}\mathbb{E}[\psi \circ V(\overline{X}_{\Gamma_{n+1}}) \vert \overline{X}_{\Gamma_n} ] }{ \alpha(1- \lambda)} \\
&+\gamma_{n+1}\theta_{n+1} \frac{ \overline{C}_{\lambda}} {\alpha(1- \lambda)}.
 \end{align*}
where the last inequality follows from the fact that the sequence $(\theta_n)_{n \in \mathbb{N}^{\ast}}$ is non-increasing. Taking expectancy and summing over $n$ yields the result as $\psi$ takes positive values and $\mathbb{E}[\psi \circ V(X_{n_0})]< + \infty$ for every $n_0 \in \mathbb{N}^{\ast}$. 
\end{proof}
This result concludes the general approach in a generic framework to prove convergence. The next part of this paper is dedicated to various applications.
\subsection{Example - The Euler scheme}
Using this abstract approach, we recover the results obtained in \cite{Lamberton_Pages_2002} and \cite{Lamberton_Pages_2003} for the Euler scheme of a $d$-dimensional Brownian diffusion. We consider a $N$-dimensional Brownian motion $(W_t)_{t \geqslant 0}$. We are interested in the strong solution - assumed to exist and to be unique - of the $d$-dimensional stochastic equation
\begin{align}
\label{eq:EDS_Euler_dim_d}
X_t= x+\int_0^t b(X_{s})ds + \int_0^t  \sigma(X_{s})  dW_s
\end{align}
where $b:\mathbb{R}^d \to \mathbb{R}^d$, $\sigma:\mathbb{R}^d \to \mathbb{R}^{d \times N}$. Let $V:\mathbb{R} \to [1,+\infty)$, the Lyapunov function of this system such that $\mbox{L}_V$ (see (\ref{hyp:Lyapunov})) holds with $E=\mathbb{R}^d $, and
\begin{align*}
\vert \nabla V \vert^2 \leqslant C_V V, \qquad \Vert D^2 V \Vert_{\infty} < + \infty.
\end{align*}
Moreover, we assume that for every $x \in \mathbb{R},  \quad  \vert b(x) \vert^2 + \mbox{Tr}[\sigma \sigma^{\ast}(x)]   \leqslant V^a(x)$ for some $a \in (0,1]$. Finally, for $p \geqslant 1$, we introduce the following $\mbox{L}_p$-mean reverting property of $V$,
\begin{align*}
\exists \alpha > 0, \beta \in \mathbb{R}, &\forall x \in \mathbb{R},  \nonumber \\   
&\langle \nabla V(x), b(x) \rangle+ \frac{1}{2}   \Vert \lambda_{p} \Vert_{\infty} 2^{(2p-3)_+} \mbox{Tr}[\sigma \sigma^{\ast}(x)]   \leqslant \beta - \alpha V^a (x )
\end{align*}
with for every $x \in \mathbb{R}^d$, $\lambda_p(x):= \sup\{\lambda_{p,1}(x),\ldots,\lambda_{p,d}(x),0 \}$, with $\lambda_{p,i}(x)$ the $i$-th eigenvalue of the matrix $D^2V(x)+2(p-1) \nabla V(x)^{\otimes 2} / V(x) $. We now introduce the Euler scheme of $(X_t)_{t \geqslant 0}$. Let $\rho \in [1,2]$ and $\epsilon_{\mathcal{I}}(\gamma)=\gamma^{\rho/2}$ and assume that (\ref{hyp:accroiss_sw_series_2}), $\mathcal{S}\mathcal{W}_{\mathcal{I}, \gamma,\eta}(\rho, \epsilon_{\mathcal{I}}) $ (see (\ref{hyp:step_weight_I})) and $\mathcal{S}\mathcal{W}_{\mathcal{II},\gamma,\eta}$ (see (\ref{hyp:step_weight_II})) hold. Let $(U_n)_{n}$ be a sequence of $\mathbb{R}^N$-valued centered independent and identically distributed random variables with covariance identity and bounded moments of order $2p$. We define the Euler scheme with decreasing steps $(\gamma_n)_{n \in \mathbb{N}^{\ast}}$, $(\overline{X}_{\Gamma_n} )_{n \in \mathbb{N}}$ of $(X_t)_{t \geqslant 0}$ (\ref{eq:EDS_Euler_dim_d}) on the time grid $\{\Gamma_n=\sum_{k=1}^n \gamma_k, n \in \mathbb{N} \}$ by
\begin{align*}
\forall n \in \mathbb{N} , \qquad  \overline{X}_{\Gamma_{n+1}}  =& \overline{X}_{\Gamma_n} + \gamma_{n+1} b(\overline{X}_{\Gamma_{n}}) + \sqrt{\gamma_{n+1}} \sigma(\overline{X}_{\Gamma_{n}})U_{n+1},\quad \overline{X}_{0} =x.
\end{align*}
We consider $ (\nu^{\eta}_n(dx,\omega))_{n \in \mathbb{N}^{\ast}}$ defined as in (\ref{eq:def_weight_emp_meas}) with $(\overline{X}_{\Gamma_n} )_{n \in \mathbb{N}}$ defined above. Now,we specify the measurable functions $\psi,\phi:[1,+\infty) \to [1,+\infty)$ as $\psi(y)=y^{p}$ and $\phi(y)=y^a$. Moreover, let $s \geqslant 1$ such that $a\,p\rho /s \leqslant p+a-1$, $p/s+a-1>0$ and $\Tr[\sigma \sigma^{\ast}] \leqslant CV^{p/s+a-1}$. Then, it follows from Theorem \ref{th:identification_limit} that there exists an invariant distribution $\nu$ for $(X_t)_{t \geqslant 0}$. Moreover, $ (\nu^{\eta}_n(dx,\omega))_{n \in \mathbb{N}^{\ast}}$ $a.s.$ weakly converges toward $\mathcal{V}$, the set of invariant distributions of $(X_t)_{t \geqslant 0}$ and when it is unique $i.e.$ $\mathcal{V}=\{\nu\}$, we have
\begin{align*}
\lim_{n \to + \infty}\nu^{\eta}_n(f)= \nu(f),
\end{align*}
for every $\nu-a.s.$ continuous function $f\in \mathcal{C}_{\tilde{V}_{\psi,\phi,s}}(\mathbb{R}^d)$ defined in (\ref{def:espace_test_function_cv}). Notice that this result was initially obtained in \cite{Lamberton_Pages_2002} when $a=1$ and in \cite{Lamberton_Pages_2003} when $a \in (0,1]$ and in both cases $s=\rho=2$. Afterwards, the study was extended in the case function $\psi$ with polynomial growth in \cite{Lemaire_2007}. We do not recall this result. However, in the sequel we prove the convergence of the empirical measures for both polynomial growth and exponential growth of $\psi$ for the Euler scheme of a Brownian Markov switching diffusions and those mentioned results can be recovered from a simplified version of our approach.
\section{Applications}
In this section, we propose some concrete applications which follow from the results presented in Section \ref{section:convergence_inv_distrib_gnl}. We first give Wasserstein convergence results concerning the Milstein scheme of a weakly mean reverting Brownian diffusion. Then, we propose a detailed application for the Euler scheme of a Markov Switching diffusion for test functions with polynomial or exponential growth. As a preliminary, we give some standard notations and properties that will be used extensively in the sequel. \\

\noindent First, for $\alpha \in (0,1]$ and $f$ an $\alpha$-H\"older function we denote $[f]_{\alpha}=\sup_{x \neq y}\vert f(y)-f(x) \vert / \vert y -x \vert^{\alpha}$.\\
Now, let $d \in \mathbb{N}$. For any $ \mathbb{R}^{d \times d}$-valued symmetric matrix $S$, we define $\lambda_S:= \sup\{\lambda_{S,1},\ldots, \lambda_{S,d},0 \}$, with $\lambda_{S,i}$ the $i$-th eigenvalue of $S$.\\
\subsection{Wasserstein convergence for the Milstein scheme}
In this section, we establish Wasserstein convergence results for the empirical measures (\ref{eq:def_weight_emp_meas}) built with the Milstein approximation scheme of a one-dimensional weakly mean reverting Brownian diffusion. The framework presented in Section \ref{section:convergence_inv_distrib_gnl} is well suited this scheme and we present the result that we obtain in this case.\\
 The Milstein scheme has not been investigated until now but the convergence results are similar to the Euler case that is why, even if the proofs are more technical, we simply state them. Moreover, looking at $\mathcal{E}(\widetilde{A},A,\DomA_0)$ (see (\ref{hyp:erreur_tems_cours_fonction_test_reg})), the approximation of $A$ seems to rely on the weak order of the scheme. As a consequence, even from a rate of convergence viewpoint, intuitively, it does not possible to achieve a better rate of convergence of $(\nu^{\eta}_n)_{n \in \mathbb{N}^{\ast}}$ with Milstein scheme than with Euler scheme. We will give the proof of this result in a further paper. \\
 
 We consider a one dimensional Brownian motion $(W_t)_{t \geqslant 0}$. We are interested in the strong solution - assumed to exist and to be unique - of the one dimensional stochastic equation
\begin{align}
\label{eq:EDS_1d}
X_t= x+\int_0^t b(X_{s})ds + \int_0^t  \sigma(X_{s})  dW_s
\end{align}
where $b,\sigma, \partial_{x}\sigma:\mathbb{R} \to \mathbb{R}$. Moreover, we assume that for every $x \in \mathbb{R},  \quad  \vert b(x) \vert^2 + \vert \sigma (x) \vert^2  + \vert \sigma \sigma' ( x )   \vert^2 \leqslant C (1+\vert x \vert^{2a})$ for some $a \in (0,1]$. Finally, for $p \geqslant 1$, we introduce the following $\mbox{L}_p$-mean reverting property:
\begin{align*}
\exists \alpha > 0, \beta \in \mathbb{R}, \qquad \forall x \in \mathbb{R},  \quad  2 x b(x)+   (4p-3 )2^{(2p-3)_+} \sigma^2(x)   \leqslant \beta - \alpha \vert x \vert^{2a}
\end{align*}
We now introduce the Milstein scheme for $(X_t)_{t \geqslant 0}$. Let $\rho \in [1,2]$ and $\epsilon_{\mathcal{I}}(\gamma)=\gamma^{\rho/2}$ and assume that (\ref{hyp:accroiss_sw_series_2}), $\mathcal{S}\mathcal{W}_{\mathcal{I}, \gamma,\eta}(\rho, \epsilon_{\mathcal{I}}) $ (see (\ref{hyp:step_weight_I})) and $\mathcal{S}\mathcal{W}_{\mathcal{II},\gamma,\eta}$ (see (\ref{hyp:step_weight_II})) hold. Let $(U_n)_{n}$ be a sequence of centered independent and identically distributed random variables with variance one and bounded moments of order $2p$. We define the Milstein scheme with decreasing steps $(\gamma_n)_{n \in \mathbb{N}^{\ast}}$, $(\overline{X}_{\Gamma_n} )_{n \in \mathbb{N}}$ of $(X_t)_{t \geqslant 0}$ (\ref{eq:EDS_1d}) by: $\overline{X}_{0} =x$, $\forall n \in \mathbb{N} $,
\begin{align*}
 \overline{X}_{\Gamma_{n+1}}  =& \overline{X}_{\Gamma_n} + \gamma_{n+1} b(\overline{X}_{\Gamma_{n}}) + \sqrt{\gamma_{n+1}} \sigma(\overline{X}_{\Gamma_{n}})U_{n+1}+  \gamma_{n+1}\sigma \sigma'( \overline{X}_{\Gamma_n} )  (\vert U_{n+1} \vert^2 -1),
\end{align*}
 Then $V:\mathbb{R} \to [1,+\infty)$, $x \mapsto 1+x^2$ is a Lyapunov function for this scheme. We consider $ (\nu^{\eta}_n(dx,\omega))_{n \in \mathbb{N}^{\ast}}$ defined as in (\ref{eq:def_weight_emp_meas}) with $(\overline{X}_{\Gamma_n} )_{n \in \mathbb{N}}$ defined above. Now,we specify the measurable functions $\psi,\phi:[1,+\infty) \to [1,+\infty)$ as $\psi(y)=y^{p}$ and $\phi(y)=y^a$. Moreover, let $s \geqslant 1$ such that $ap\rho /s \leqslant p+a-1$ and $p/s+a-1>0$. Then, it follows from Theorem \ref{th:identification_limit} that there exists an invariant distribution $\nu$ for $(X_t)_{t \geqslant 0}$. Moreover, $ (\nu^{\eta}_n(dx,\omega))_{n \in \mathbb{N}^{\ast}}$ $a.s.$ weakly converges toward $\mathcal{V}$, the set of invariant distributions of $(X_t)_{t \geqslant 0}$ and when it is unique $i.e.$ $\mathcal{V}=\{\nu\}$, we have
\begin{align*}
\lim_{n \to + \infty}\nu^{\eta}_n(f)= \nu(f),
\end{align*}
for every $\nu-a.s.$ continuous function $f:\mathbb{R} \to \mathbb{R}$ such that, for every $x \in \mathbb{R}$, $\vert f(x) \vert \leqslant C(1+ \vert x \vert^{\overline{p}})$, with $\overline{p} <p/s+a-1$. In other words $(\nu^{\eta}_n)_{n \in \mathbb{N}^{\ast}}$ converges towards $\nu$ (as $n$ tends to infinity) for the $\mbox{L}_{\overline{p}}$ Wasserstein distances.
\subsection{The Euler scheme for a Markov Switching diffusion}
In this part of the paper, we study invariant distributions for Markov switching Brownian diffusions. The framework presented in Section \ref{section:convergence_inv_distrib_gnl} is well suited to this case. Our results extend the convergence results obtained in \cite{Mei_Yin_2015} and inspired by \cite{Lamberton_Pages_2002}. More particularly, in \cite{Mei_Yin_2015}, the convergence of $(\nu^{\eta}_n)_{n \in\mathbb{N}^{\ast}}$ is established under a strongly mean reverting assumption that is $\phi=I_d$. In this paper, we do not restrict to that case and consider a weakly mean-reverting setting, namely $\phi(y)=y^a$, $a \in (0,1]$ for every $y \in [v_{\ast},\infty)$. As a first step, we consider polynomial test functions that is $\psi(y)=y^p$, $ p \geqslant 1$ for every $y \in [v_{\ast},\infty)$ like in \cite{Mei_Yin_2015} (where $p\geqslant 4$ is required). As a second step, still under a weakly mean-reverting setting (but where $\phi$ is not explicitly specified), we extend those results to functions $\psi$ with exponential growth which enables to obtain convergence of the empirical measures for much wider class of test functions.\\

Now, we present the Markov switching model, its decreasing step Euler approximation and the hypothesis necessary to obtain the convergence of $(\nu^{\eta}_n)_{n \in\mathbb{N}^{\ast}}$. We consider a $d$-dimensional Brownian motion $(W_t)_{t \geqslant 0}$ and $(\zeta_t)_{t \geqslant 0} $ a continuous time Markov chain taking values in the finite state space $\{1,\ldots,M_0 \}$, $M_0 \in \mathbb{N}^{\ast}$ with generator $\mathfrak{Q}=(q_{z,w})_{z,w\in \{1,\ldots,M_0 \} }$ and independent from $W$. We are interested in the strong solution - assumed to exist and to be unique - of the d-dimensional stochastic equation
\begin{align*}
X_t= x+\int_0^t b(X_{s}, \zeta_s)ds + \int_0^t & \sigma(X_{s},\zeta_s)  dW_s
\end{align*}
where for every $z \in \{1,\ldots,M_0\}$, $b(.,z): \mathbb{R}^d \to \mathbb{R}^d$ and $\sigma(.,z)\to \mathbb{R}^{d \times d}$ are locally bounded functions. We recall that $q_{z,w} \geqslant 0$ for $z \neq w$, $z,w \in \{1,\ldots,M_0\}$ and $\sum\limits_{w =1}^{M_0} q_{z,w}=0$ for every $z\in \{1,\ldots,M_0\}$. The infinitesimal generator of this process reads
\begin{align*}
Af(x,z) = & \langle b(x,z) , \nabla_x f(x,z) \rangle + \frac{1}{2}\sum_{i,j=1}^d (\sigma \sigma^{\ast})_{i,j}(x, z) \frac{\partial^2f}{\partial x_i \partial x_j}(x, z)  \\
&+ \sum_{w=1}^{M_0} q_{z,w} f(x,w),
\end{align*} 
for every $(x, z) \in E:= \mathbb{R}^d \times \{1,\ldots,M_0 \}$. Moreover, the domain $\DomA$ of $A$ contains $\DomA_0 =\{f \mbox{ defined on }E, \forall z \in \{1,\ldots,M_0\}, f(.,z)\in\mathcal{C}^2_K(\mathbb{R}^d)\}$. Notice that $\DomA_0 $ is dense in $\mathcal{C}_0(E)$. The reader may refer to \cite{Yin_Zhu_2010} for more details concerning Markov switching diffusion processes where properties such as recurrence, ergodicity and stability are established. We consider the Euler genuine scheme of this process for every $n \in \mathbb{N}$ and every $t \in [\Gamma_n, \Gamma_{n+1}]$, defined by
\begin{align}
\label{eq:MS_scheme}
\overline{X}_{t}  =& \overline{X}_{\Gamma_n} + (t-\Gamma_{n}) b(\overline{X}_{\Gamma_{n}} , \zeta_{\Gamma_n}) + \sigma(\overline{X}_{\Gamma_{n}} ,  \zeta_{\Gamma_n})(W_t-W_{\Gamma_n}) 
\end{align}
We will also denote $\Delta \overline{X}_{n+1}=\overline{X}_{\Gamma_{n+1}}-\overline{X}_{\Gamma_{n}}$ and
\begin{align}
\label{def:incr_MS}
\Delta \overline{X}^1_{n+1} =   \gamma_{n+1}b(\overline{X}_{\Gamma_{n}} , \zeta_{\Gamma_{n}})  , \quad  \Delta \overline{X}^2_{n+1} =  \sigma (\overline{X}_{\Gamma_{n}}, \zeta_{\Gamma_{n}}) (W_{\Gamma_{n+1}}-W_{\Gamma_n} ),
\end{align}
and $\overline{X}_{\Gamma_{n+1}}^i=\overline{X}_{\Gamma_n}+ \sum_{j=1}^i \Delta \overline{X}^i_{n+1}$. In the sequel we will use the notation $U_{n+1}=\gamma_{n+1}^{-1/2}(W_{\Gamma_{n+1}}-W_{\Gamma_n}) $.
Finally, we consider a Lyapunov function $V: \mathbb{R}^d \times \{1,\ldots,M_0 \} \to [v_{\ast}, \infty)$, $v_{\ast}>0$, which satisfies $\mbox{L}_V$ (see (\ref{hyp:Lyapunov})) with $E=\mathbb{R}^d \times \{1,\ldots,M_0\}$, and
\begin{align}
\label{hyp:Lyapunov_control_MS}
\vert \nabla_x V \vert^2 \leqslant C_V V, \qquad \sup_{(x,z) \in E} \vert D^2_x V(x,z) \vert < + \infty.
\end{align}
Its mean-reverting properties will be defined further depending on the set of `test functions' $f$. We also define
\begin{align}
\label{def:lambda_psi_MS}
\forall x \in \mathbb{R}^d , z \in \{1,\ldots,M_0 \}, \quad \lambda_{\psi}(x,z):=\lambda_{D_x^2V(x,z)+ 2 \nabla_x V(x,z)^{\otimes 2} \psi''\circ V(x,z) \psi'\circ V(x,z)^{-1}}  .
\end{align}
When $\psi(y)=\psi_p(y)=y^{p}$, $p>0$, we will also use the notation $\lambda_p$ instead of $\lambda_{\psi}$.
We suppose that there exists $C >0$ such that $b$ and $\sigma$ satisfy
\begin{align}
\label{hyp:controle_coefficients_MS}
\mathfrak{B}(\phi) \quad \equiv \qquad \forall x \in \mathbb{R}^d, &\forall z \in \{1,\ldots,M_0\}, \nonumber \\
& \vert b(x,z) \vert^2 + \Tr[\sigma \sigma^{\ast}(x,z) ] \leqslant C \phi \circ V (x,z)
\end{align}
\paragraph{Test functions with polynomial growth. \\}
Having in mind Wasserstein convergence, we introduce a weaker assumption on the sequence $(U_n)_{n \in \mathbb{N}^{\ast}} $ than Gaussian distribution . Let $q \in \mathbb{N}^{\ast}$, $p \geqslant 0$. We suppose that $(U_n)_{n \in \mathbb{N}^{\ast}} $ is a sequence of independent and identically distributed random variables such that
\begin{align}
\label{hyp:matching_normal_moment_ordre_q_va_schema_MS}
M_{\mathcal{N},q}(U) \quad \equiv \quad \forall n \in \mathbb{N}^{\ast} ,\forall \tilde{q} \in \{1, \ldots, q\} , \quad \mathbb{E}[(U_n)^{\otimes \tilde{q}}]=\mathbb{E}[(\mathcal{N}(0,I_d))^{\otimes \tilde{q}}]
\end{align}
\begin{align}
\label{hyp:moment_ordre_p_va_schema_MS}
M_p(U) \quad \equiv \qquad \sup_{n \in \mathbb{N}^{\ast}} \mathbb{E}[\vert U_n \vert^{2p} ] < + \infty
\end{align}
We assume that
\begin{align}
\label{hyp:Lyapunov_control_MS_unif} 
\exists c_{V} \geqslant 1, \forall x \in \mathbb{R}^d, \quad \sup_{z \in \{1,\ldots,M_0 \}} V(x,z)\leqslant c_{V} \inf_{z \in \{1,\ldots,M_0 \}} V(x,z).
\end{align}
Let $\alpha>0$ and $\beta \in \mathbb{R}$. We introduce the mean-reverting property of the scheme for the Lyapunov function $V$. We assume that $\liminf\limits_{y \to +\infty} \phi(y)>\beta/\alpha$ and that there exists $\epsilon_0>0$, such that we have
\begin{align}
\label{hyp:recursive_control_param_MS}
\mathcal{R}_p(\alpha,\beta,\phi,V) \quad \equiv &\quad \forall x \in \mathbb{R}^d,  \forall z \in \{1,\ldots,M_0\}, \nonumber \\
&  \langle \nabla V(x,z), b(x,z) \rangle+ \frac{1}{2}   \chi_{p}(x,z) \leqslant \beta - \alpha \phi \circ V (x,z ),
\end{align}
with
\begin{align}
\label{hyp:recursive_control_param_terme_ordre_sup_MS}
\chi_{p}(x,z) = & \Vert \lambda_{p} \Vert_{\infty} 2^{(2p-3)_+} \mbox{Tr}[\sigma \sigma^{\ast}(x,z)]  \nonumber \\
& +V^{1-p}(x,z) \sum_{w=1}^{M_0} (q_{z,w}+\epsilon_0 )V^p(x,w)
\end{align}
\begin{mytheo}
\label{th:cv_was_MS}
Let $p \geqslant 1,a \in (0,1]$, $s \geqslant 1, \rho \in [1,2]$, $\psi_p(y)=y^p$, $\phi(y)=y^a$ and $\epsilon_{\mathcal{I}}(\gamma)=\gamma^{\rho/2}$. Let $\alpha>0$ and $\beta \in \mathbb{R}$. \\

Assume that $(U_n)_{n \in \mathbb{N}^{\ast}}$ satisfies $M_{\mathcal{N},2}(U)$ (see (\ref{hyp:matching_normal_moment_ordre_q_va_schema_MS})) and $M_{p}(U)$ (see (\ref{hyp:moment_ordre_p_va_schema_MS})). \\
Also assume that (\ref{hyp:Lyapunov_control_MS}), $\mathfrak{B}(\phi)$ (see (\ref{hyp:controle_coefficients_MS})), $\mathcal{R}_p(\alpha,\beta,\phi,V)$ (see (\ref{hyp:recursive_control_param_MS})), $\mbox{L}_{V}$ (see (\ref{hyp:Lyapunov})), $\mathcal{S}\mathcal{W}_{\mathcal{I}, \gamma,\eta}(\rho, \epsilon_{\mathcal{I}})$ (see (\ref{hyp:step_weight_I})), $\mathcal{S}\mathcal{W}_{\mathcal{II},\gamma,\eta} $ (see (\ref{hyp:step_weight_II})), (\ref{hyp:accroiss_sw_series_2}) and (\ref{hyp:Lyapunov_control_MS_unif}) hold and that $p\rho/s \leqslant p+a-1$. \\

Then, if $p/s+a-1>0$, $(\nu_n^{\eta})_{n \in \mathbb{N}^{\ast}}$ (built with $(\overline{X}_t)_{t \geqslant 0}$ defined in (\ref{eq:MS_scheme})) is $\mathbb{P}-a.s.$ tight and
\begin{align}
 \label{eq:tightness_MS}
\mathbb{P} \mbox{-a.s.} \quad \sup_{n \in \mathbb{N}^{\ast}} \nu_n^{\eta}( V^{p/s+a-1} ) < + \infty .
\end{align}
Assume also that for every $z \in \{1,\ldots,M_0\}$, $b(.,z)$ and $\sigma(.,z)$ have sublinear growth and $\Tr[\sigma \sigma^{\ast}(x,z)]\leqslant C  V^{p/s+a-1}(x,z)$. Then every weak limiting distribution $\nu$ of $(\nu^{\eta}_n)_{n \in \mathbb{N}^{\ast}}$ is an invariant distribution of $(X_t)_{t \geqslant 0}$ and when $\nu$ is unique, we have
\begin{align}
 \label{eq:cv_was_MS}
\mathbb{P} \mbox{-a.s.} \quad \forall f \in \mathcal{C}_{\tilde{V}_{\psi_p,\phi,s}}(E), \quad \lim\limits_{n \to + \infty} \nu_n^{\eta}(f)=\nu(f),
\end{align}
 with $\mathcal{C}_{\tilde{V}_{\psi_p,\phi,s}}(E)$ defined in (\ref{def:espace_test_function_cv}).
\end{mytheo}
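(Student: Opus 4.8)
The strategy is to verify that Theorem \ref{th:cv_was_MS} is an instance of the abstract results, Theorems \ref{th:tightness} and \ref{th:identification_limit}, applied with $E=\mathbb{R}^d\times\{1,\ldots,M_0\}$, $\psi=\psi_p$, $\phi(y)=y^a$, $s$ as given, $\rho$ as given, and $\epsilon_{\mathcal{I}}(\gamma)=\gamma^{\rho/2}$. So the work splits into: (1) checking the recursive control $\mathcal{RC}_{Q,V}(\psi_p,\phi,\alpha,\beta)$ from the ``continuous'' hypothesis $\mathcal{R}_p(\alpha,\beta,\phi,V)$; (2) checking the growth control $\mathcal{GC}_{Q}$ and the step-weight hypotheses via Lemma \ref{lemme:mom_V}; (3) checking the infinitesimal generator approximation $\mathcal{E}(\widetilde{A},A,\DomA_0)$ with $\DomA_0=\{f:\forall z, f(\cdot,z)\in\mathcal{C}^2_K(\mathbb{R}^d)\}$; and then invoking the abstract theorems to conclude tightness, (\ref{eq:tightness_MS}), and the identification of limits, hence (\ref{eq:cv_was_MS}).

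For step (1), I would compute $\widetilde{A}_{\gamma_{n+1}}(\psi_p\circ V)(\overline{X}_{\Gamma_n},\zeta_{\Gamma_n})=\gamma_{n+1}^{-1}\mathbb{E}[V^p(\overline{X}_{\Gamma_{n+1}},\zeta_{\Gamma_{n+1}})-V^p(\overline{X}_{\Gamma_n},\zeta_{\Gamma_n})\,|\,\overline{X}_{\Gamma_n},\zeta_{\Gamma_n}]$. Conditionally on $\zeta_{\Gamma_n}=z$, the continuous component moves as a one-step Euler increment $\Delta\overline{X}_{n+1}=\gamma_{n+1}b(\cdot,z)+\sqrt{\gamma_{n+1}}\sigma(\cdot,z)U_{n+1}$ while $\zeta$ jumps to $w$ with probability $\gamma_{n+1}q_{z,w}+o(\gamma_{n+1})$; a Taylor expansion of $V^p(\cdot,z)$ to second order, together with the moment-matching $M_{\mathcal{N},2}(U)$, the Lyapunov controls (\ref{hyp:Lyapunov_control_MS}), $M_p(U)$, and the definition (\ref{def:lambda_psi_MS}) of $\lambda_p$, gives the bound $\langle\nabla V,b\rangle+\tfrac12\chi_p$ up to a remainder of order $\gamma_{n+1}$ times lower-order terms controlled by $\mathfrak{B}(\phi)$; here the $\epsilon_0$ slack in (\ref{hyp:recursive_control_param_terme_ordre_sup_MS}) absorbs both the jump contribution and this remainder for $n$ large. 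This yields $\mathcal{RC}_{Q,V}(\psi_p,\phi,\alpha,\beta)(i)$ for some $n_0$, while $(ii)$ is exactly $\liminf_y\phi(y)>\beta/\alpha$. I would also note $C_\phi=\sup_y y^{a-1}<\infty$ since $a\le1$ and $v_*\ge1$ (after possibly normalizing $v_*$), and that $p\rho/s\le p+a-1$ is precisely the condition ensuring $\mathcal{GC}_{Q}((\psi_p\circ V)^{1/s},V^{p/s+a-1},\rho,\epsilon_{\mathcal{I}})$ holds with the needed $g=V^{p/s+a-1}$: one estimates $\mathbb{E}[|V^{p/s}(\overline{X}_{\Gamma_{n+1}})-\mathscr{Q}_{\gamma_{n+1}}V^{p/s}(\overline{X}_{\Gamma_n})|^\rho\,|\,\overline{X}_{\Gamma_n}]$ by the martingale-increment part of a Taylor expansion, which is of size $\gamma_{n+1}^{\rho/2}(|\nabla V|\,|\sigma|)^\rho V^{(p/s-1)\rho}\lesssim \gamma_{n+1}^{\rho/2}V^{(a+p/s-1)\rho/(\ldots)}$ — the exponent arithmetic using $|\nabla V|^2\le C_V V$, $\mathfrak{B}(\phi)$ with $\phi(y)=y^a$, and $p\rho/s\le p+a-1$ closes it. Lemma \ref{lemme:mom_V} then upgrades $\mathcal{S}\mathcal{W}_{\mathcal{I},\gamma,\eta}(\rho,\epsilon_{\mathcal{I}})$ and $\mathcal{S}\mathcal{W}_{\mathcal{II},\gamma,\eta}$ into $\mathcal{S}\mathcal{W}_{\mathcal{I},\gamma,\eta}(V^{p/s+a-1},\rho,\epsilon_{\mathcal{I}})$ and $\mathcal{S}\mathcal{W}_{\mathcal{II},\gamma,\eta}((\psi_p\circ V)^{1/s})$; its hypothesis $\mathbb{E}[\psi_p\circ V(\overline{X}_{\Gamma_{n_0}})]<\infty$ follows from the local boundedness of $b,\sigma$ and boundedness of moments of $U$, by induction on $n_0$. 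Applying Theorem \ref{th:tightness} point \ref{th:tightness_point_B} (with the convention that tightness on $\mathbb{R}^d\times\{1,\ldots,M_0\}$ reduces to tightness of the $\mathbb{R}^d$-marginals, using (\ref{hyp:Lyapunov_control_MS_unif}) to compare $V(x,z)$ across $z$) gives $\mathbb{P}$-a.s. $\sup_n\nu_n^\eta(\tilde V_{\psi_p,\phi,s})<\infty$, i.e. (\ref{eq:tightness_MS}) since $\tilde V_{\psi_p,\phi,s}=V^{p/s+a-1}\asymp V^{a-1}V^{p/s}$ up to the $V^{-1}$ and constants, and hence a.s. tightness.

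For step (3), with $f\in\DomA_0$ compactly supported in $x$ for each $z$, I would Taylor-expand $\mathscr{Q}_{\gamma}f(x,z)=\mathbb{E}[f(\overline{X}_{\Gamma_{n+1}},\zeta_{\Gamma_{n+1}})\,|\,(x,z)]$: the diffusive part gives $\gamma(\langle b,\nabla_x f\rangle+\tfrac12\sum(\sigma\sigma^*)_{ij}\partial^2_{ij}f)$ plus a remainder bounded by $\gamma$ times a modulus-of-continuity term in $\gamma$ (using that $D^2_xf,D^3_xf$ are bounded and $U$ has finite moments, and $M_{\mathcal{N},2}(U)$ kills the first-order-in-$\gamma$ error term in the second derivative), while the jump part contributes $\gamma\sum_w q_{z,w}f(x,w)$ plus $O(\gamma^2)$. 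Thus $|\widetilde{A}_\gamma f(x,z)-Af(x,z)|\le \Lambda_f((x,z),\gamma)$ with a representation of the required form: $g$ locally bounded (bounds on derivatives of $f$, on $|b|,|\sigma|$ locally), and $\tilde\Lambda_f$ built from terms like $\sup\{|D^2_xf(y)-D^2_xf(x)|:|y-x|\le C\sqrt\gamma|U|+C\gamma\}$ which tend to $0$ as $\gamma\to0$ uniformly on compacts and, because $f$ has compact $x$-support, also satisfy the ``vanishing at infinity'' clause \ref{hyp:erreur_tems_cours_fonction_test_reg_Lambda_representation_1}(ii) (for $x$ far from the support, both $\widetilde{A}_\gamma f$ and $Af$ are small). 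The requirement $\sup_n\nu_n^\eta(g_i)<\infty$ a.s. follows from (\ref{eq:tightness_MS}) since the relevant $g_i$ are dominated by $V^{p/s+a-1}$ — here one uses the extra hypotheses that $b(\cdot,z),\sigma(\cdot,z)$ have sublinear growth and $\Tr[\sigma\sigma^*]\le C V^{p/s+a-1}$. With (\ref{hyp:accroiss_sw_series_2}) in hand, Theorem \ref{th:identification_limit} point \ref{th:identification_limit_A} gives $H_n^{-1}\sum\eta_k\widetilde{A}_{\gamma_k}f(\overline{X}_{\Gamma_{k-1}})\to0$ a.s. for $f\in\DomA_0$, and point \ref{th:identification_limit_B} then gives $\nu_n^\eta(Af)\to0$ a.s., so by Echeverria--Weiss (Theorem \ref{th:eche_weiss_feller}, applicable since $\DomA_0$ is dense in $\mathcal{C}_0(E)$ and the generator of a Feller process satisfies the structural hypotheses) every a.s.\ weak limit of $(\nu_n^\eta)$ lies in $\mathcal{V}$; uniqueness then forces $\nu_n^\eta(f)\to\nu(f)$ for all $f\in\mathcal{C}_{\tilde V_{\psi_p,\phi,s}}(E)$, which is (\ref{eq:cv_was_MS}).

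**Main obstacle.** The delicate point is step (1): showing that the one-step pseudo-generator of the \emph{discrete} Euler/Markov-switching scheme is dominated by the \emph{continuous} drift condition $\mathcal{R}_p$. The subtlety is twofold — first, $V^p$ is not globally Lipschitz with bounded second derivative, so the Taylor remainder must be controlled using $\psi_p''\circ V$, the precise form of $\lambda_p$ in (\ref{def:lambda_psi_MS}), the Lyapunov controls (\ref{hyp:Lyapunov_control_MS}), and $\mathfrak{B}(\phi)$ to bound $|b|^2+\Tr[\sigma\sigma^*]$ by $\phi\circ V=V^a$; second, the interaction between the continuous increment and the switching has to be handled simultaneously, and the $\epsilon_0$-margin in (\ref{hyp:recursive_control_param_terme_ordre_sup_MS}) must be shown to genuinely absorb the $\gamma$-order corrections coming from both the second-order Taylor term \emph{and} the discrete approximation of the jump rates. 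Getting the constant $2^{(2p-3)_+}$ and the factor in front of $\Tr[\sigma\sigma^*]$ to match requires careful bookkeeping via the Jensen/power-function inequalities as in the proof of Theorem \ref{th:tightness} point \ref{th:tightness_point_B}. Everything else is routine but lengthy; the full computation is deferred.
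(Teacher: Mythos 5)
Your plan reproduces the paper's own proof essentially step for step: the paper verifies the recursive control via Proposition~\ref{prop:recursive_control_MS} (your step (1), including the role of the $\epsilon_0$-margin and the uniform comparability (\ref{hyp:Lyapunov_control_MS_unif}) in absorbing the switching term), obtains the growth control from Lemma~\ref{lemme:incr_lyapunov_X_MS}, upgrades the step-weight hypotheses through Lemma~\ref{lemme:mom_V}, proves the infinitesimal approximation $\mathcal{E}(\widetilde{A},A,\DomA_0)$ exactly as in your step (3) via Proposition~\ref{prop:MS_infinitesimal_approx}, and concludes by Theorems~\ref{th:tightness} and~\ref{th:identification_limit}. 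So the route is the same; the deferred ``full computation'' of the recursive control is precisely where the paper's Proposition~\ref{prop:recursive_control_MS} does the real work, and you identify that obstacle correctly.

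One concrete correction is needed in your step (2): you assert $\mathcal{GC}_{Q}\big((\psi_p\circ V)^{1/s},\,V^{p/s+a-1},\,\rho,\epsilon_{\mathcal{I}}\big)$, i.e.\ with dominating function $g=V^{p/s+a-1}$. That is false in general: the one-step $\mbox{L}^{\rho}$ fluctuation of $V^{p/s}$ is of order $\gamma^{\rho/2}\,V^{(p/s-1/2)\rho}(\phi\circ V)^{\rho/2}=\gamma^{\rho/2}V^{p\rho/s+(a-1)\rho/2}$, and under $p\rho/s\leqslant p+a-1$ this is dominated by $\gamma^{\rho/2}V^{p+a-1}$, not by $\gamma^{\rho/2}V^{p/s+a-1}$ (e.g.\ $\rho=s=2$ would require $p/2\leqslant 0$). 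This matters because the $g$ appearing in $\mathcal{GC}_{Q}$ must be the same $g$ for which $\mathcal{S}\mathcal{W}_{\mathcal{I},\gamma,\eta}(g,\rho,\epsilon_{\mathcal{I}})$ is available, and Lemma~\ref{lemme:mom_V} delivers it exactly for $\tilde V_{\psi_p,\phi,1}=V^{p+a-1}$ (not $V^{p/s+a-1}$). The fix is immediate and coincides with the paper: take $g=V^{p+a-1}$ throughout (this is the statement of Lemma~\ref{lemme:incr_lyapunov_X_MS} and of the paper's Steps 2--3), and keep $V^{p/s+a-1}=\tilde V_{\psi_p,\phi,s}$ only for the conclusion (\ref{eq:tightness_MS}) coming from Theorem~\ref{th:tightness} point \ref{th:tightness_point_B}. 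With that bookkeeping repaired, your argument is the paper's argument.
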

 \paragraph{Test functions with exponential growth. \\} 
We modify the hypothesis concerning the Lyapunov function $V$ in the following way. First, we assume that 
 \begin{align}
\label{hyp:Lyapunov_control_MS_unif_expo} 
\forall z \in \{1,\ldots,M_0\}, \forall x \in \mathbb{R}^d  \quad V(x,z) =V(x,1),
\end{align}
and we will use the notation $V(x):=V(x,1)$. We assume that
\begin{align}
\label{hyp:dom_recurs_MS}
\forall x \in \mathbb{R}^d , &\forall z \in \{1,\ldots,M_0 \},  \nonumber \\
&  \Tr[\sigma \sigma^{\ast} (x,z) ] \vert b(x) \vert \big( \vert\nabla{V}(x)\vert+ \vert b (x,z) \vert \big) \leqslant C V^{1-p}(x) \phi \circ V (x)
\end{align}
Now let $p \leqslant 1$ and let $\alpha>0$ and $\beta \in \mathbb{R}$. We assume that $\liminf\limits_{y \to + \infty} \phi(y)>\beta_+/\alpha$, $\beta_+=0 \vee \beta$, and 
\begin{align}
\label{hyp:recursive_control_param_MS_expo}
\mathcal{R}_{p, \lambda}(\alpha,\beta,\phi,V)& \quad \equiv  \quad \forall x \in \mathbb{R}^d,  \forall z \in \{1,\ldots,M_0\},  \nonumber \\
& \langle \nabla V(x), b(x,z) + \kappa_p(x,z) \rangle+   \frac{1}{2}\chi_{p}(x,z) \leqslant \beta - \alpha \phi \circ V (x),
\end{align}
with
\begin{align*}
\kappa_p(x,z)= \lambda p \frac{ V^{p-1}(x)}{\phi \circ V(x)}\sigma \sigma^{\ast} (x,z) \nabla V (x) 
\end{align*}
and
\begin{align*}
\chi_{p}(x,z) =  - \frac{V^{1-p} (x)}{ \phi \circ V(x) C_{\sigma}(x,z) }  \ln (\det(\Sigma(x,z))) 
\end{align*}
with $\Sigma: \mathbb{R}^d \times  \{1,\ldots,M_0\}, \to \mathcal{S}^d_{+,\ast}$, $\mathcal{S}^d_{+,\ast}$ being the set of a positive definite matrix, defined by $(x,z) \mapsto \Sigma(x,z):=I_d-\Vert D^2 V  \Vert_{\infty}  C_{\sigma}(x,z)V^{p-1}(x)  \sigma^{\ast} \sigma(x,z)$, where $C_{\sigma}: \mathbb{R}^d \times \{1,\ldots,M_0\}\to \mathbb{R}_+^{\ast}$ satisfies
$\inf_{x \in \mathbb{R}^d} \inf_{z \in \{1,\ldots,M_0\}}C_{\sigma}(x,z)>0$.
%
%
%
%
\begin{mytheo}
\label{th:cv_exp_MS}
 Let $p\in [0,1], \lambda \geqslant 0$, $s \geqslant 1$, $\rho \in [1,2]$, let $\phi:[v_{\ast},\infty )\to \mathbb{R}_+$ be a continuous function such that $C_{\phi}:= \sup_{y \in [v_{\ast},+ \infty )}\phi(y)/y< +\infty$ and $\liminf\limits_{y \to +\infty} \phi(y)=+\infty$, let $\psi(y)=\exp ( \lambda y^p )$, $y\in \mathbb{R}_+$ and let $\epsilon_{\mathcal{I}}(\gamma)=\gamma^{\rho/2}$ and $\tilde{\epsilon}_{\mathcal{I}}(\gamma)=\gamma^{\rho(p \wedge 1/2)}$. Let $\alpha>0$ and $\beta \in \mathbb{R}$. \\
 
 Assume that $\rho<s$, (\ref{hyp:Lyapunov_control_MS_unif_expo}), (\ref{hyp:Lyapunov_control_MS}), $\mathfrak{B}(\phi)$ (see (\ref{hyp:controle_coefficients_MS})), $\mathcal{R}_{p, \lambda}(\alpha,\beta,\phi,V)$ (see (\ref{hyp:recursive_control_param_MS_expo})) and $\mbox{L}_{V}$ (see (\ref{hyp:Lyapunov}))  hold.
Also suppose that $\mathcal{S}\mathcal{W}_{\mathcal{I}, \gamma,\eta}(\rho, \epsilon_{\mathcal{I}})$, $\mathcal{S}\mathcal{W}_{\mathcal{I}, \gamma,\eta}(\rho, \tilde{\epsilon}_{\mathcal{I}})$ (see (\ref{hyp:step_weight_I})), $\mathcal{S}\mathcal{W}_{\mathcal{II},\gamma,\eta} $ (see (\ref{hyp:step_weight_II})), (\ref{hyp:accroiss_sw_series_2}) and (\ref{hyp:dom_recurs_MS}) hold.\\
 
  Then $(\nu_n^{\eta})_{n \in \mathbb{N}^{\ast}}$ (built with $(\overline{X}_t)_{t \geqslant 0}$ defined in (\ref{eq:MS_scheme})) is $\mathbb{P}-a.s.$ tight and
\begin{align}
 \label{eq:tightness_MS_expo}
\mathbb{P} \mbox{-a.s.} \quad \sup_{n \in \mathbb{N}^{\ast}} \nu_n^{\eta} \Big( \frac{\phi \circ V }{V} \exp \big(\lambda/s V^{p}) \Big) < + \infty .
\end{align}
Assume also that for every $z \in \{1,\ldots,M_0\}$, $b(.,z)$ and $\sigma(.,z)$ have sub-linear growth. Then, every weak limiting distribution $\nu$ of $(\nu_n^{\eta})_{n \in \mathbb{N}^{\ast}}$ is an invariant distribution of $(X_t)_{t \geqslant 0}$ and if $\nu$ is unique, 
\begin{align}
\label{eq:cv_expo_MS}
\mathbb{P} \mbox{-a.s.} \quad \forall f \in \mathcal{C}_{\tilde{V}_{\psi,\phi,s}}(E), \quad \lim\limits_{n \to + \infty} \nu_n^{\eta}(f)=\nu(f),
\end{align}
 with $\mathcal{C}_{\tilde{V}_{\psi,\phi,s}}(E)$ defined in (\ref{def:espace_test_function_cv}).
\end{mytheo}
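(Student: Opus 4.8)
The plan is to obtain Theorem~\ref{th:cv_exp_MS} by specializing the abstract Theorems~\ref{th:tightness} and~\ref{th:identification_limit} to $E=\mathbb{R}^d\times\{1,\ldots,M_0\}$, to the transition kernels $(\mathscr{Q}_\gamma)_{\gamma>0}$ of the Euler--switching scheme~\eqref{eq:MS_scheme}, to $\psi(y)=\exp(\lambda y^p)$, to $\phi$ as in the statement, to $\epsilon_{\mathcal I}(\gamma)=\gamma^{\rho/2}$, and to the dense class $\DomA_0$. Concretely, what must be checked is: (i) that $\widetilde A_{\gamma_n}(\psi\circ V)$ and $\widetilde A_{\gamma_n}(\psi\circ V)^{1/s}$ exist and that $\widetilde A_{\gamma_n}(\psi\circ V)$ satisfies the weakly mean-reverting recursive control $\mathcal{RC}_{Q,V}(\psi,\phi,\alpha,\beta)$; (ii) the growth controls $\mathcal{GC}_{Q}((\psi\circ V)^{1/s},g,\rho,\epsilon_{\mathcal I})$ and $\mathcal{GC}_{Q}(\DomA_0,g_0,\rho,\epsilon_{\mathcal I})$ for suitable locally bounded $g,g_0$; (iii) the short-time approximation $\mathcal{E}(\widetilde A,A,\DomA_0)$ in a representable form~\eqref{hyp:erreur_tems_cours_fonction_test_reg}; and (iv) that the assumed step-weight conditions, fed through the exponential analogue of Lemma~\ref{lemme:mom_V}, give $\mathcal{S}\mathcal{W}_{\mathcal{I},\gamma,\eta}(g,\rho,\epsilon_{\mathcal I})$, $\mathcal{S}\mathcal{W}_{\mathcal{II},\gamma,\eta}((\psi\circ V)^{1/s})$, $\sup_n\nu_n^\eta(g)<+\infty$ and $\sup_n\nu_n^\eta(g_0)<+\infty$. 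Granting these, Theorem~\ref{th:tightness}, point~\ref{th:tightness_point_B} (using $C_\phi<+\infty$ and, for hypothesis~\ref{th:tightness_point_B_ii}, that $\phi\circ V\,\psi\circ V^{1/s}/V\to+\infty$ since $\psi\circ V^{1/s}=\exp(\tfrac\lambda s V^p)$ and $\liminf_y\phi(y)=+\infty$) yields $\mathbb{P}$-a.s.\ tightness and the bound~\eqref{eq:tightness_MS_expo} once one notes $\tilde V_{\psi,\phi,s}=\frac{\phi\circ V}{V}\exp(\tfrac\lambda s V^p)$; Theorem~\ref{th:identification_limit}, point~\ref{th:identification_limit_B}, whose remaining hypothesis~\eqref{hyp:accroiss_sw_series_2} is assumed, then identifies every weak limit of $(\nu_n^\eta)$ with an invariant distribution and delivers~\eqref{eq:cv_expo_MS} when $\mathcal V=\{\nu\}$, the sub-linear growth of $b(\cdot,z),\sigma(\cdot,z)$ guaranteeing non-explosion and the Feller property so that $A$ is a genuine Feller generator on $\DomA_0$ and the Echeverria--Weiss theorem applies.

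The crux is step~(i): the one-step expansion of $\psi\circ V$ along~\eqref{eq:MS_scheme}. Because $\zeta$ is sampled exactly, $\mathscr{Q}_{\gamma_{n+1}}f(x,z)=\mathbb{E}[f(x+\gamma_{n+1}b(x,z)+\sqrt{\gamma_{n+1}}\,\sigma(x,z)U,\zeta_{\gamma_{n+1}})\mid\zeta_0=z]$, and by the standing hypothesis~\eqref{hyp:Lyapunov_control_MS_unif_expo} that $V(x,z)=V(x)$ is $z$-independent, the jump part contributes only $\big(\sum_w q_{z,w}\big)\psi\circ V(x)=0$ up to an $o(\gamma)$ term: this is precisely why $z$-independence of $V$ is imposed in the exponential regime. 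For the diffusion increment $\Delta=\gamma_{n+1}b+\sqrt{\gamma_{n+1}}\,\sigma U$, I would Taylor-expand $V$ to second order, use the concavity of $y\mapsto y^p$ ($p\le1$) to get $V(x+\Delta)^p-V(x)^p\le p\,V(x)^{p-1}\big(V(x+\Delta)-V(x)\big)$, and then integrate $\exp\!\big(\tfrac\lambda s\,p\,V^{p-1}(\cdots)\big)$, an exponential of an affine-plus-quadratic form in the essentially Gaussian increment $U$ (using $M_{\mathcal N,2}(U)$, with higher moments controlled away via $\mathfrak{B}(\phi)$ and $\gamma_n\to0$). The quadratic part is where the matrix $\Sigma(x,z)=I_d-\Vert D^2V\Vert_\infty C_\sigma(x,z)V^{p-1}\sigma^*\sigma(x,z)$ must be positive definite for the Gaussian integral $\det(I_d-2B)^{-1/2}\exp(\tfrac12 a^\top(I_d-2B)^{-1}a)$ to be finite, which produces the term $-\frac{V^{1-p}}{\phi\circ V\,C_\sigma}\ln\det\Sigma$ of $\chi_p$; squaring the affine part produces the drift correction $\langle\nabla V,\kappa_p\rangle$. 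Collecting terms one gets $\widetilde A_{\gamma_{n+1}}(\psi\circ V)(x,z)\le\frac{\psi\circ V(x)}{V(x)}\big(\langle\nabla V,b+\kappa_p\rangle+\tfrac12\chi_p+R(x,z,\gamma_{n+1})\big)$, where the remainder $R$, by~\eqref{hyp:dom_recurs_MS}, \eqref{hyp:Lyapunov_control_MS} and $\mathfrak{B}(\phi)$, satisfies $R\le C\gamma_{n+1}^\delta\,\phi\circ V(x)$ for some $\delta>0$; using $\gamma_n\to0$ and $\liminf_y\phi(y)>\beta_+/\alpha$ one absorbs $R$ and derives $\mathcal{RC}_{Q,V}(\psi,\phi,\alpha,\beta)$ from~\eqref{hyp:recursive_control_param_MS_expo} for $n$ large.

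For step~(ii), the martingale increment $f(\overline X_{\Gamma_{n+1}})-\mathscr{Q}_{\gamma_{n+1}}f(\overline X_{\Gamma_n})$ with $f=(\psi\circ V)^{1/s}=\exp(\tfrac\lambda s V^p)$ is estimated in $\LL^\rho$ by the same expansion together with the conditional evaluation of $\mathbb{E}[\exp(\tfrac{\rho\lambda}{s}V(\overline X_{\Gamma_{n+1}})^p)\mid\overline X_{\Gamma_n}]$: here the hypothesis $\rho<s$ is decisive, since it keeps the exponent $\tfrac{\rho\lambda}{s}<\lambda$ and lets the Gaussian-integrability computation close, giving $\mathbb{E}[|\cdots|^\rho\mid\overline X_{\Gamma_n}]\le C_f\,\epsilon_{\mathcal I}(\gamma_{n+1})\,g(\overline X_{\Gamma_n})$ with $g=\tilde V_{\psi,\phi,1}$, while the genuinely nonlinear part of the increment is controlled at the coarser scale $\tilde\epsilon_{\mathcal I}(\gamma_{n+1})=\gamma_{n+1}^{\rho(p\wedge1/2)}$ --- this is why both $\mathcal{S}\mathcal{W}_{\mathcal{I},\gamma,\eta}(\rho,\epsilon_{\mathcal I})$ and $\mathcal{S}\mathcal{W}_{\mathcal{I},\gamma,\eta}(\rho,\tilde\epsilon_{\mathcal I})$ are demanded. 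The exponential analogue of Lemma~\ref{lemme:mom_V} (its proof is unchanged once $\mathcal{RC}_{Q,V}$ is available) then converts these into $\mathcal{S}\mathcal{W}_{\mathcal{I},\gamma,\eta}(g,\rho,\epsilon_{\mathcal I})$, $\mathcal{S}\mathcal{W}_{\mathcal{II},\gamma,\eta}((\psi\circ V)^{1/s})$ and $\sup_n\nu_n^\eta(g)<+\infty$. Step~(iii) is comparatively soft: for $f\in\DomA_0$ (compactly supported in $x$) the weak error of the Euler--switching scheme is $O(\gamma)$ uniformly on compacts and identically $0$ for $x$ outside a fixed compact, so $\mathcal{E}(\widetilde A,A,\DomA_0)$ holds with $g_0$ bounded of compact support and a $\tilde\Lambda_f$ fulfilling representation~\ref{hyp:erreur_tems_cours_fonction_test_reg_Lambda_representation_1}, and $\mathcal{GC}_{Q}(\DomA_0,g_0,\rho,\epsilon_{\mathcal I})$ and $\sup_n\nu_n^\eta(g_0)<+\infty$ are then immediate.

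I expect step~(i), and inside it the bookkeeping of the remainder $R$, to be the main obstacle. Unlike the polynomial case, every remainder term drags along a factor $\exp(\lambda V^p)$ (or $\exp(\tfrac{\rho\lambda}{s}V^p)$ in step~(ii)) that can only be tamed because several structural features conspire: the concavity of $y\mapsto y^p$ keeps all relevant exponents strictly below $\lambda$; the $z$-independence of $V$ kills the switching contribution; the positive-definiteness of $\Sigma$ makes the quadratic Gaussian integrals finite; and the domination hypothesis~\eqref{hyp:dom_recurs_MS} is exactly tailored to dominate the worst cross term $\Tr[\sigma\sigma^*]\,|b|\,(|\nabla V|+|b|)$ by $V^{1-p}\phi\circ V$. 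Making all constants match so that the net coefficient multiplying $\frac{\psi\circ V}{V}$ ends up $\le\beta-\alpha\phi\circ V$ up to a vanishing perturbation is the delicate accounting; the remaining verifications are routine once this is done.
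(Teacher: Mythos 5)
Your overall route is the paper's own: recursive control for $\psi=\exp(\lambda\,\cdot^{\,p})$ via the Gaussian exponential--quadratic integral (the positive-definiteness of $\Sigma$, the $\ln\det\Sigma$ term and the drift correction $\kappa_p$ are exactly Proposition \ref{prop:recursive_control_MS_exp} and Lemma \ref{lemme:transformee_laplace_prod_gauss}), growth control for $\exp(\lambda V^p/s)$ closed by H\"older using $\rho<s$, step-weight conditions converted through Lemma \ref{lemme:mom_V} with $g=V^{-1}\phi\circ V\exp(\lambda V^p)$, and the conclusion via Theorems \ref{th:tightness} and \ref{th:identification_limit}. There is, however, one step that fails as you describe it: the infinitesimal approximation $\mathcal{E}(\widetilde A,A,\DomA_0)$. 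You claim the weak error of the scheme is ``identically $0$ for $x$ outside a fixed compact'', so that a bounded, compactly supported $g_0$ works and $\sup_n\nu_n^\eta(g_0)<+\infty$ is immediate. This is false: for fixed $\gamma>0$ the Gaussian increment of (\ref{eq:MS_scheme}) reaches the support of $f$ from any $x$, so $\widetilde A_\gamma f(x,z)-Af(x,z)\neq 0$ off every compact; worse, the second-order term is of the form $\Tr[\sigma\sigma^\ast(x,z)]\,\tilde{\mathbb E}\big[|U|^2\,|D^2_xf(x+b\gamma+\theta\sigma\sqrt\gamma U,\cdot)|\big]$, and with $\sigma$ of sublinear growth the prefactor $\Tr[\sigma\sigma^\ast]$ is unbounded, so no bound of the required form with compactly supported (or even bounded) $g$ is available, and the crude option II of (\ref{hyp:erreur_tems_cours_fonction_test_reg}) with $g\equiv 1$ also fails since $\sup_x$ of this product is infinite for fixed $\gamma$.

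This is precisely why the abstract hypothesis (\ref{hyp:erreur_tems_cours_fonction_test_reg}) allows an unbounded factor $g$ together with the a.s.\ control $\sup_n\nu_n^\eta(g_i)<+\infty$: in Proposition \ref{prop:MS_infinitesimal_approx} the paper takes $g_1=\mathds{1}_{|x|\leqslant R}|b|$ and $g_2=\Tr[\sigma\sigma^\ast]$, pushes the vanishing-at-infinity/uniform-on-compacts behaviour into $\tilde\Lambda_{f,i}$ (representation I, pointwise in the randomization variables $U,\Theta$), and then needs $\sup_n\nu_n^\eta(\Tr[\sigma\sigma^\ast])<+\infty$ a.s., which is not immediate but is deduced from the tightness estimate (\ref{eq:tightness_MS_expo}) already proved, combined with $\mathfrak B(\phi)$ (see (\ref{hyp:controle_coefficients_MS})) so that $\Tr[\sigma\sigma^\ast]\leqslant C\,V^{-1}\phi\circ V\exp(\lambda V^p/s)$; this ordering is the reason the identification part of the theorem comes after the tightness part. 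To repair your step (iii) you should adopt this factorized representation rather than a compactly supported $g_0$. A minor further caveat: in the exponential regime the parenthetical ``using $M_{\mathcal N,2}(U)$ (see (\ref{hyp:matching_normal_moment_ordre_q_va_schema_MS})), with higher moments controlled away'' is not tenable as a hypothesis --- the Laplace-transform computation of Lemma \ref{lemme:transformee_laplace_prod_gauss} requires genuinely Gaussian increments (which (\ref{eq:MS_scheme}) does provide, since it is driven by Brownian increments), and moment matching alone would not close the argument.
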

\subsubsection{Proof of the recursive mean-reverting control}
\paragraph{Test functions with polynomial growth}
\begin{myprop}
\label{prop:recursive_control_MS}
Let $v_{\ast}>0$, and let $\phi:[v_{\ast},\infty )\to \mathbb{R}_+^{\ast}$ be a continuous function such that $C_{\phi}:= \sup_{y \in [v_{\ast},\infty )}\phi(y)/y< +\infty$.  Now let $p \geqslant 1$ and define $\psi_p(y)=y^p$, $y\in \mathbb{R}_+$.\\

Assume that the sequence $(U_n)_{n \in \mathbb{N}^{\ast}}$ satisfies $M_{\mathcal{N},2}(U)$ (see (\ref{hyp:matching_normal_moment_ordre_q_va_schema_MS})) and $M_{p}(U)$ (see (\ref{hyp:moment_ordre_p_va_schema_MS})).\\
Also suppose that (\ref{hyp:Lyapunov_control_MS}), (\ref{hyp:Lyapunov_control_MS_unif}), $\mathfrak{B}(\phi)$ (see (\ref{hyp:controle_coefficients_MS})), $\mathcal{R}_p(\alpha,\beta,\phi,V)$ (see (\ref{hyp:recursive_control_param_MS})) for some $\alpha>0$ and $\beta \in \mathbb{R}$, are satisfied. \\

Then, for every $\tilde{\alpha}\in (0, \alpha)$, there exists $n_0 \in \mathbb{N}^{\ast}$, such that 
\begin{align}
\label{eq:recursive_control_MS_fonction_pol}
 \forall n   \geqslant n_0, \forall x \in \mathbb{R}^d, &\forall z \in \{1,\ldots,M_0 \}, \nonumber \\
 &\widetilde{A}_{\gamma_n} \psi_p \circ V(x,z)\leqslant  \frac{\psi_p \circ V(x,z)}{V(x,z) }p\big(\beta - \tilde{\alpha} \phi\circ V(x,z)\big). 
\end{align}
Then $\mathcal{RC}_{Q,V}(\psi_p,\phi,p\tilde{\alpha},p\beta)$ (see (\ref{hyp:incr_sg_Lyapunov})) holds for every $\tilde{\alpha}\in (0, \alpha)$ such that $\liminf\limits_{y \to +\infty} \phi(y) > \beta / \tilde{\alpha}$. Moreover, when $\phi=Id$, we have
\begin{align}
\label{eq:mom_pol_MS}
\sup_{n \in \mathbb{N}} \mathbb{E}[\psi_p \circ V (\overline{X}_{\Gamma_n},\zeta_{\Gamma_n})] < + \infty.
\end{align}
\end{myprop}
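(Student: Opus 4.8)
Here is how I would attack Proposition~\ref{prop:recursive_control_MS}.

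The plan is to estimate the one-step increment $\widetilde A_{\gamma_n}(\psi_p\circ V)(x,z)=\gamma_n^{-1}\bigl(\mathscr Q_{\gamma_n}(\psi_p\circ V)(x,z)-\psi_p\circ V(x,z)\bigr)$ and to compare it, for $n$ large enough, with $p\,V^{p-1}(x,z)$ times the left-hand side of $\mathcal R_p(\alpha,\beta,\phi,V)$ (see (\ref{hyp:recursive_control_param_MS})), the missing room being supplied by the slack $\alpha-\tilde\alpha>0$. Since the Brownian increment driving one Euler step is independent of the switching chain $\zeta$, writing $\overline X_{\Gamma_n}=x+\gamma_n b(x,z)+\sqrt{\gamma_n}\,\sigma(x,z)U_n$ and denoting by $(e^{\gamma_n\mathfrak Q})_{z,w}$ the one-step switching probabilities, I would split
\[
\mathscr Q_{\gamma_n}(\psi_p\circ V)(x,z)-\psi_p\circ V(x,z)=D_n(x,z)+J_n(x,z),
\]
where $D_n(x,z)=\mathbb E\bigl[\psi_p\circ V(\overline X_{\Gamma_n},z)\bigr]-\psi_p\circ V(x,z)$ is the pure diffusion step with the regime $z$ frozen and $J_n(x,z)=\mathbb E\bigl[\sum_{w}\bigl((e^{\gamma_n\mathfrak Q})_{z,w}-\delta_{z,w}\bigr)\psi_p\circ V(\overline X_{\Gamma_n},w)\bigr]$ is the pure switching contribution, both expectations being over $U_n$ only.

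For $D_n$ I would perform a second-order Taylor expansion of $y\mapsto V(y,z)^p$ around $x$, controlling the increment $\overline X_{\Gamma_n}-x$ by means of $\mathfrak B(\phi)$ (see (\ref{hyp:controle_coefficients_MS})), which bounds $|b(x,z)|$ and $\Tr[\sigma\sigma^\ast(x,z)]$ by $C\,\phi\circ V(x,z)$, together with (\ref{hyp:Lyapunov_control_MS}) ($|\nabla_x V|^2\le C_V V$, $\sup|D^2_x V|<+\infty$). Taking expectations with $M_{\mathcal N,2}(U)$ (which provides $\mathbb E[U_n]=0$ and $\mathbb E[U_nU_n^\top]=I_d$) and dominating the remainder with $M_p(U)$ — the key estimate for $p\ge2$ being $(a+\delta)^p\le a^p+pa^{p-1}\delta+\tfrac{p(p-1)}{2}2^{(p-2)_+}(a^{p-2}+|\delta|^{p-2})\delta^2$, which is what produces the exponent $2^{(2p-3)_+}$ — one gets that $\gamma_n^{-1}D_n(x,z)=p\,V^{p-1}(x,z)\bigl(\langle\nabla_x V,b\rangle+\tfrac12\Tr[\sigma\sigma^\ast(D^2_x V+2(p-1)\nabla_x V^{\otimes2}/V)]\bigr)(x,z)+r^D_n(x,z)$ with $|r^D_n(x,z)|\le\omega(\gamma_n)\,p\,V^{p-1}(x,z)\,\phi\circ V(x,z)$ and $\omega(\gamma_n)\to 0$. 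Since $\sigma\sigma^\ast$ is positive semidefinite, $\Tr[\sigma\sigma^\ast M]\le\lambda_M\Tr[\sigma\sigma^\ast]$ for $M=D^2_x V+2(p-1)\nabla_x V^{\otimes2}/V$, i.e. $\lambda_M=\lambda_p(x,z)$ (see (\ref{def:lambda_psi_MS})), and $\lambda_p(x,z)\le\Vert\lambda_p\Vert_\infty\le\Vert\lambda_p\Vert_\infty 2^{(2p-3)_+}$, which reproduces exactly the first summand of $\chi_p(x,z)$.

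For $J_n$ I would expand $e^{\gamma_n\mathfrak Q}=\mathrm{Id}+\gamma_n\mathfrak Q+O(\gamma_n^2)$, so that $\gamma_n^{-1}J_n(x,z)=\mathbb E\bigl[\sum_w q_{z,w}\psi_p\circ V(\overline X_{\Gamma_n},w)\bigr]+O(\gamma_n)\,\mathbb E\bigl[\max_w\psi_p\circ V(\overline X_{\Gamma_n},w)\bigr]$, and then replace $\overline X_{\Gamma_n}$ by $x$ in each $\psi_p\circ V(\cdot,w)$: by (\ref{hyp:Lyapunov_control_MS_unif}) one has $V(x,z)\le c_V V(x,w)$, so $\mathfrak B(\phi)$ controls the one-step increment by $C\gamma_n^{1/2}V^{1/2}(x,w)$ in $L^2$, whence $\mathbb E\bigl|\psi_p\circ V(\overline X_{\Gamma_n},w)-\psi_p\circ V(x,w)\bigr|\le C\gamma_n^{1/2}V^p(x,w)$, while $\max_w V^p(x,w)\le c_V^p V^p(x,z)$. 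This yields $\gamma_n^{-1}J_n(x,z)\le\sum_w q_{z,w}V^p(x,w)+C\gamma_n^{1/2}V^p(x,z)$. The role of the $\epsilon_0$-enlargement in $\chi_p$ is now transparent: since $\sum_w\epsilon_0 V^p(x,w)\ge\epsilon_0 M_0 c_V^{-p}V^p(x,z)$ and $\sum_w q_{z,w}V^p(x,w)=\sum_{w\ne z}q_{z,w}\bigl(V^p(x,w)-V^p(x,z)\bigr)\ge q_{z,z}V^p(x,z)$, the quantity $\tfrac p2\sum_w(q_{z,w}+\epsilon_0)V^p(x,w)$ dominates $\sum_w q_{z,w}V^p(x,w)+C\gamma_n^{1/2}V^p(x,z)$ for $\gamma_n$ small, the surplus $(\tfrac p2-1)\sum_w q_{z,w}V^p(x,w)$ that appears when $p>2$ being absorbed by the $\epsilon_0$ term.

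Assembling the two estimates and comparing with $\mathcal R_p(\alpha,\beta,\phi,V)$ multiplied by $p\,V^{p-1}(x,z)$, all the error terms take the form $\widetilde\omega(\gamma_n)\,p\,V^{p-1}(x,z)\,\phi\circ V(x,z)$ with $\widetilde\omega(\gamma_n)\to 0$; hence, given $\tilde\alpha\in(0,\alpha)$, there is $n_0\in\mathbb N^\ast$ with $\widetilde\omega(\gamma_n)\le\alpha-\tilde\alpha$ for $n\ge n_0$, which is (\ref{eq:recursive_control_MS_fonction_pol}). Then $\mathcal{RC}_{Q,V}(\psi_p,\phi,p\tilde\alpha,p\beta)(i)$ (see (\ref{hyp:incr_sg_Lyapunov})) holds, and $(ii)$ reads $\liminf_y\phi(y)>p\beta/(p\tilde\alpha)=\beta/\tilde\alpha$, the imposed condition. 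When $\phi=I_d$, I would derive (\ref{eq:mom_pol_MS}) from Lemma~\ref{lemme:mom_psi_V} applied with $\psi=\psi_p$: the hypothesis $\mathcal{RC}_{Q,V}(\psi_p,I_d,p\tilde\alpha,p\beta)$ has just been proved, and $\mathbb E[\psi_p\circ V(\overline X_{\Gamma_{n_0}})]<+\infty$ for every $n_0$ follows by induction on $n$, using that $|\nabla_x V|^2\le C_V V$ forces $V$ to have at most quadratic growth in $x$, that $b(x,\cdot),\sigma(x,\cdot)$ are locally bounded, and that $U_n$ has finite moments of order $2p$ by $M_p(U)$. The main obstacle will be the uniform-in-$(x,z)$ control of the Taylor remainder in $D_n$ for a general, possibly non-integer, exponent $p\ge1$ with the precise constant $2^{(2p-3)_+}$, interlocked with the bookkeeping showing that the $\epsilon_0$-enlarged switching term of $\chi_p$ exactly swallows both the discretisation error of $J_n$ and the $p>2$ surplus.
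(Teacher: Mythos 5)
Your overall strategy is the paper's: the same splitting of the one-step increment into a frozen-regime diffusion part and a pure switching part (cf.\ (\ref{eq:decomp_preuve_RC_pol_MS})), the same second-order Taylor/moment estimates based on $\mathfrak{B}(\phi)$, (\ref{hyp:Lyapunov_control_MS}), $M_{\mathcal{N},2}(U)$ and $M_p(U)$, the same use of (\ref{hyp:Lyapunov_control_MS_unif}) and of the $\epsilon_0$-enlarged switching term to absorb the $O(\gamma_n^{1/2})$ error coming from replacing $\overline{X}_{\Gamma_{n+1}}$ by $\overline{X}_{\Gamma_n}$ in the switching step, and the same appeal to Lemma \ref{lemme:mom_psi_V} for (\ref{eq:mom_pol_MS}).

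There is, however, one step whose stated form would fail. You claim that $\gamma_n^{-1}D_n(x,z)$ equals the exact second-order generator term evaluated at $x$ plus a remainder $r^D_n$ with $\vert r^D_n(x,z)\vert\leqslant\omega(\gamma_n)\,p\,V^{p-1}(x,z)\,\phi\circ V(x,z)$ and $\omega(\gamma_n)\to0$ uniformly in $(x,z)$. The Taylor remainder of $\psi_p\circ V$ is carried by the Hessian-type term at a random intermediate point $\Upsilon_{n+1}\in(\overline{X}_{\Gamma_n},\overline{X}_{\Gamma_{n+1}})$, and the gap between $V^{p-1}(\Upsilon_{n+1})\lambda_p(\Upsilon_{n+1})$ and $V^{p-1}(x)\lambda_p(x)$, once multiplied by $\vert\overline{X}_{\Gamma_{n+1}}-\overline{X}_{\Gamma_n}\vert^2$ (whose conditional mean is of order $\gamma_{n+1}\Tr[\sigma\sigma^{\ast}]$), contributes at the same order $\gamma_{n+1}V^{p-1}\Tr[\sigma\sigma^{\ast}]$ as the main term; it is not $o(\gamma_{n+1})V^{p-1}\phi\circ V$, and no uniform modulus of continuity of $D^2_xV$ or of $V^{p-1}\nabla_xV^{\otimes2}/V$ is available to make it so. This is exactly why $\chi_p$ carries the inflated constant $\Vert\lambda_p\Vert_{\infty}2^{(2p-3)_+}$: in the paper's proof one bounds $V^{p-1}(\Upsilon_{n+1})\leqslant2^{(2p-3)_+}\big(V^{p-1}(\overline{X}_{\Gamma_n})+[\sqrt{V}]_1^{2p-2}\vert\Delta\overline{X}_{n+1}\vert^{2p-2}\big)$, matches the $2^{(2p-3)_+}V^{p-1}(x)$ piece at main order against the first summand of $\chi_p$, and only the $\vert\Delta\overline{X}_{n+1}\vert^{2p}$ piece (of order $\gamma_{n+1}^{p}\,\phi\circ V^{p}\leqslant C_\phi^{p-1}\gamma_{n+1}^{p}V^{p-1}\phi\circ V$, hence genuinely negligible after division by $\gamma_{n+1}$ since $p>1$, together with the $\gamma_{n+1}^2\vert b\vert^2$ term) is absorbed by the slack $\alpha-\hat{\alpha}$, $\hat{\alpha}-\tilde{\alpha}$. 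In your accounting you instead spend the factor $2^{(2p-3)_+}$ as a trivial bound $\lambda_p\leqslant\Vert\lambda_p\Vert_{\infty}2^{(2p-3)_+}$ on the main term evaluated at $x$, leaving no budget for the intermediate-point correction, so as written the error budget does not close; it does close once the remainder is organized as in the paper, which is essentially the ``obstacle'' you flag at the end but which is the heart of the proof rather than bookkeeping.
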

\begin{proof}  
First we write
\begin{align}
\label{eq:decomp_preuve_RC_pol_MS}
V^p( \overline{X}_{\Gamma_{n+1}}, \zeta_{\Gamma_{n+1}})-V^p(\overline{X}_{\Gamma_{n}}, \zeta_{\Gamma_{n}})=& V^p(\overline{X}_{\Gamma_{n+1}}, \zeta_{\Gamma_{n}})-V^p(\overline{X}_{\Gamma_{n}}, \zeta_{\Gamma_{n}}) \\
& +V^p(\overline{X}_{\Gamma_{n+1}}, \zeta_{\Gamma_{n+1}})- V^p(\overline{X}_{\Gamma_{n+1}}, \zeta_{\Gamma_{n}}) \nonumber
\end{align}
We study the first term of the $r.h.s.$ of the above equality. From the second order Taylor expansion and the definition of $\lambda_{\psi_p}=\lambda_p$ (see (\ref{def:lambda_psi_MS})), we derive
\begin{align}
\label{eq:taylor_preuve_RC_pol_MS}
\psi_{p} \circ V & (\overline{X}_{\Gamma_{n+1}}, \zeta_{\Gamma_{n}})  \nonumber \\
=& \psi_{p} \circ V(\overline{X}_{\Gamma_n},\zeta_{\Gamma_{n}})+ \langle \overline{X}_{\Gamma_{n+1}}-\overline{X}_{\Gamma_n}, \nabla_x V(\overline{X}_{\Gamma_n},\zeta_{\Gamma_{n}}) \rangle \psi_{p}'\circ V(\overline{X}_{\Gamma_n},\zeta_{\Gamma_{n}}) \nonumber \\
&+ \frac{1}{2}  D_x^2 V(\Upsilon_{n+1},\zeta_{\Gamma_{n}} ) \psi_{p}' \circ  V(\Upsilon_{n+1} ,\zeta_{\Gamma_{n}})
 ( \overline{X}_{\Gamma_{n+1}}-\overline{X}_{\Gamma_n} )^{\otimes 2} \nonumber \\
 &+ \frac{1}{2} \nabla_x V (\Upsilon_{n+1},\zeta_{\Gamma_{n}} )^{\otimes 2} \psi_{p}'' \circ V(\Upsilon_{n+1},\zeta_{\Gamma_{n}} ) 
 ( \overline{X}_{\Gamma_{n+1}}-\overline{X}_{\Gamma_n} )^{\otimes 2} \nonumber \\
 \leqslant & \psi_{p} \circ V(\overline{X}_{\Gamma_n},\zeta_{\Gamma_{n}})+ \langle \overline{X}_{\Gamma_{n+1}}-\overline{X}_{\Gamma_n}, \nabla_x V(\overline{X}_{\Gamma_n},\zeta_{\Gamma_{n}}) \rangle \psi_{p}'\circ V(\overline{X}_{\Gamma_n},\zeta_{\Gamma_{n}})  \nonumber \\
&+ \frac{1}{2} \lambda_{p} (\Upsilon_{n+1},\zeta_{\Gamma_{n}} )  \psi_{p}'\circ V(\Upsilon_{n+1} ,\zeta_{\Gamma_{n}}) \vert  \overline{X}_{\Gamma_{n+1}}-\overline{X}_{\Gamma_n}  \vert^{2}. 
\end{align} 
with $\Upsilon_{n+1}  \in (\overline{X}_{\Gamma_n}, \overline{X}_{\Gamma_{n+1}})$. First, from (\ref{hyp:Lyapunov_control_MS}), we have $\sup_{z \in \{1,\ldots,M_0 \}}\sup_{x \in \mathbb{R}^d} \lambda_{p} (x,z)  < + \infty$.
 Now, since  $(U_n)_{n \in \mathbb{N}^{\ast}} $ is $i.i.d.$ and satisfies $M_{\mathcal{N},1}(U)$ (see (\ref{hyp:matching_normal_moment_ordre_q_va_schema_MS})), we compute
 \begin{align*}
& \mathbb{E}[\overline{X}_{\Gamma_{n+1}}-\overline{X}_{\Gamma_n} \vert \overline{X}_{\Gamma_n},\zeta_{\Gamma_{n}} ]= \gamma_{n+1}  b(\overline{X}_{\Gamma_n} ,\zeta_{\Gamma_{n}})\\
& \mathbb{E}[ \vert  \overline{X}_{\Gamma_{n+1}}-\overline{X}_{\Gamma_n} \vert^{2} \vert \overline{X}_{\Gamma_n},\zeta_{\Gamma_{n}} ]= \gamma_{n+1}\mbox{Tr}[\sigma \sigma^{\ast}(\overline{X}_{\Gamma_n},\zeta_{\Gamma_{n}} )]  + \gamma_{n+1}^2 \vert b(\overline{X}_{\Gamma_n},\zeta_{\Gamma_{n}} ) \vert^{2} .
\end{align*}
We focus on the study of the last term of the $r.h.s$ of (\ref{eq:taylor_preuve_RC_pol_MS}), also called the `remainder'.
\paragraph{Case $p=1$. }Assume first that $p=1$. Using $\mathfrak{B}(\phi)$ (see (\ref{hyp:controle_coefficients_MS})), for every $\tilde{\alpha} \in (0, \alpha)$, there exists $n_0(\tilde{\alpha})$ such that, for every $n\geqslant n_0(\tilde{\alpha})$,
\begin{align}
\label{eq:recursive_control_remaider_Id_MS}
\frac{1}{2} \Vert \lambda_{1} \Vert_{\infty}    \gamma_{n+1}^2 \vert b(\overline{X}_{\Gamma_n} ,\zeta_{\Gamma_{n}} ) \vert^2 \leqslant \gamma_{n+1}(\alpha- \tilde{\alpha})\phi \circ V(\overline{X}_{\Gamma_n},,\zeta_{\Gamma_{n}} ).
\end{align}
From assumption $\mathcal{R}_p(\alpha,\beta,\phi,V)$ (see (\ref{hyp:recursive_control_param_MS}) and (\ref{hyp:recursive_control_param_terme_ordre_sup_MS})), we gather all the terms of (\ref{eq:taylor_preuve_RC_pol_MS}) together and we conclude that
\begin{align*}
\gamma_{n+1}^{-1} \mathbb{E}[V (\overline{X}_{\Gamma_{n+1}},\zeta_{\Gamma_{n}})-  V(\overline{X}_{\Gamma_{n}},\zeta_{\Gamma_{n}}) \vert \overline{X}_{\Gamma_{n}},\zeta_{\Gamma_{n}}] +&\sum_{z=1}^{M_0}  (q_{\zeta_{\Gamma_{n}},z}+\epsilon_0 ) V( \overline{X}_{\Gamma_{n}} ,z)  \\
&\quad \leqslant \beta - \tilde{\alpha} \phi \circ V( \overline{X}_{\Gamma_{n}} ,\zeta_{\Gamma_{n}} ).
\end{align*}
\paragraph{Case $p>1$. }Assume now that $p>1$ so that $\psi_{p}'(y)=py^{p-1}$. Since $\vert \nabla V \vert^2 \leqslant C_V V$ (see (\ref{hyp:Lyapunov_control_MS})), then $\sqrt{V}$ is Lipschitz. Now, we use the following inequality: Let $l \in \mathbb{N}^{\ast}$. We have
\begin{align}
\label{eq:puisance_somme_n_terme}
\forall \alpha >0 ,\forall u_i \in \mathbb{R}^d, i=1,\ldots,l, \qquad  \big \vert  \sum_{i=1}^l u_i \big \vert^{\alpha} \leqslant l^{(\alpha-1)_+}  \sum_{i=1}^l \vert u_i  \vert^{\alpha} .
\end{align}
It follows that
\begin{align*}
V^{p-1} (\Upsilon_{n+1} ,\zeta_{\Gamma_{n}} )  \leqslant & \big( \sqrt{V}(\overline{X}_{\Gamma_n},\zeta_{\Gamma_{n}})+[\sqrt{V}]_1 \vert \overline{X}_{\Gamma_{n+1}}-\overline{X}_{\Gamma_n} \vert \big)^{2p-2} \\
\leqslant & 2^{(2p-3)_+} (V^{p-1}(\overline{X}_{\Gamma_n},\zeta_{\Gamma_{n}} ) + [\sqrt{V}]_1^{2p-2} \vert \overline{X}_{\Gamma_{n+1}}-\overline{X}_{\Gamma_n} \vert^{2p-2})
\end{align*}
To study the `remainder' of (\ref{eq:taylor_preuve_RC_pol_MS}), we multiply the above inequality by $\vert \overline{X}_{\Gamma_{n+1}}-\overline{X}_{\Gamma_n} \vert^{2}$. First, we study the second term which appears in the $r.h.s.$ and using $\mathfrak{B}(\phi)$ (see (\ref{hyp:controle_coefficients_MS})), for any $p \geqslant 1$,
\begin{align*}
\vert  \overline{X}_{\Gamma_{n+1}}- \overline{X}_{\Gamma_n} \vert^{2p} \leqslant C \gamma_{n+1}^p \phi \circ V(\overline{X}_{\Gamma_{n}},\zeta_{\Gamma_{n}} )^p(1+\vert U_{n+1} \vert^{2p} ).
\end{align*}
 Let $\hat{\alpha} \in (0,\alpha)$. Therefore, we deduce from $M_{p}(U)$ (see (\ref{hyp:moment_ordre_p_va_schema_MS})) that there exists $n_0(\hat{\alpha}) \in \mathbb{N}$ such that for any $n \geqslant n_0(\hat{\alpha})$, we have
\begin{align*}
 \mathbb{E}[ \vert \overline{X}_{\Gamma_{n+1}}- \overline{X}_{\Gamma_n} \vert^{2p} & \vert \overline{X}_{\Gamma_n}, \zeta_{\Gamma_{n}} ] \\
& \leqslant \gamma_{n+1} \phi \circ V (\overline{X}_{\Gamma_{n}} , \zeta_{\Gamma_{n}})^{p} \frac{\alpha- \hat{\alpha} }{C_{\phi}^{p-1} \Vert \lambda_{p} \Vert_{\infty} 2^{(2p-3)_+} [\sqrt{V}]_1^{2p-2} } .
\end{align*}
To treat the other term of the `remainder' of (\ref{eq:taylor_preuve_RC_pol_MS}), we proceed as in (\ref{eq:recursive_control_remaider_Id_MS}) with $\Vert \lambda_{1} \Vert_{\infty}$ replaced by $\Vert \lambda_{p} \Vert_{\infty} 2^{2p-3} [\sqrt{V}]_1^{2p-2}  $, $\alpha$ replaced by $ \hat{\alpha}$ and $\tilde{\alpha} \in (0, \hat{\alpha})$. We gather all the terms of (\ref{eq:taylor_preuve_RC_pol_MS}) together and using $\mathcal{R}_p(\alpha,\beta,\phi,V)$ (see (\ref{hyp:recursive_control_param_MS}) and (\ref{hyp:recursive_control_param_terme_ordre_sup_MS})), for every $n \geqslant n_0(\tilde{\alpha}) \vee n_0(\hat{\alpha}) $, we obtain
\begin{align*}
\mathbb{E}[V^p (\overline{X}_{\Gamma_{n+1}},\zeta_{\Gamma_{n}}) - &  V^p(\overline{X}_{\Gamma_{n}},\zeta_{\Gamma_{n}}) \vert  \overline{X}_{\Gamma_{n}},\zeta_{\Gamma_{n}}] \\
& +V^{1-p}(\overline{X}_{\Gamma_{n}},\zeta_{\Gamma_{n}}) \sum_{z=1}^{M_0}( q_{\zeta_{\Gamma_{n}},z}+\epsilon )  V^p(\overline{X}_{\Gamma_{n}},z) \\
& \qquad \leqslant   \gamma_{n+1}p V^{p-1}(\overline{X}_{\Gamma_{n}}, \zeta_{\Gamma_{n}})( \beta - \alpha \phi \circ V (\overline{X}_{\Gamma_{n}}, \zeta_{\Gamma_{n}})  ) \\
 &\qquad \qquad+\gamma_{n+1}p V^{p-1}(\overline{X}_{\Gamma_{n}}, \zeta_{\Gamma_{n}}) \Big(  \phi \circ V (\overline{X}_{\Gamma_{n}},\zeta_{\Gamma_{n}}) (\hat{\alpha} -\tilde{\alpha}  ) \\
 & \qquad \qquad  \qquad+ (\alpha-\hat{\alpha})  \frac{ V^{1-p}(\overline{X}_{\Gamma_{n}},\zeta_{\Gamma_{n}})  \phi \circ V (\overline{X}_{\Gamma_{n}}, \zeta_{\Gamma_{n}})^{p} }{C_{\phi}^{p-1}}    \Big) \\
& \qquad\leqslant \gamma_{n+1} V^{p-1}(\overline{X}_{\Gamma_{n}},\zeta_{\Gamma_{n}}) \big( \beta p- \tilde{\alpha} p  \phi \circ V (\overline{X}_{\Gamma_{n}} ,\zeta_{\Gamma_{n}} ) \big ). \\
\end{align*}
 Now, we focus on the second term of the $r.h.s.$ of (\ref{eq:decomp_preuve_RC_pol_MS}). First, since $\zeta$ and $W$ are independent, it follows, with notations (\ref{def:incr_MS}), that
\begin{align*}
\mathbb{E}[V^p(\overline{X}_{\Gamma_{n+1}}, \zeta_{\Gamma_{n+1}})- & V^p(\overline{X}_{\Gamma_{n+1}}, \zeta_{\Gamma_{n}})  \vert  \overline{X}_{\Gamma_{n}}, \zeta_{\Gamma_{n}}, \Delta \overline{X}_{n+1}   ] \\
& = \gamma_{n+1} \sum_{z=1}^{M_0}( q_{\zeta_{\Gamma_{n}}  ,z}+  \underset{n \to + \infty}{o}(\gamma_{n+1} ))V^p(\overline{X}_{\Gamma_{n+1}},z) .
\end{align*}
Now, using the same reasoning as for the first term of the $r.h.s.$ of (\ref{eq:decomp_preuve_RC_pol_MS}) and (\ref{hyp:Lyapunov_control_MS_unif}), since $p \geqslant 1$, we derive, for every $z \in \{1,\ldots,M_0\}$,
\begin{align*}
\vert \mathbb{E} [V^p(\overline{X}_{\Gamma_{n+1}},z)  - & V^p(\overline{X}_{\Gamma_{n}},z) \vert  \overline{X}_{\Gamma_{n}}, \zeta_{\Gamma_{n}} ] \vert \\
 \leqslant &  C(\gamma_{n+1}^{1/2} V^{p-1}(\overline{X}_{\Gamma_{n}} ,z) \phi \circ V( \overline{X}_{\Gamma_{n}} ,\zeta_{\Gamma_{n}})  +\gamma_{n+1}^{p} \phi \circ V( \overline{X}_{\Gamma_{n}} ,\zeta_{\Gamma_{n}}) ^p \\
& +    \gamma_{n+1} V^{p-1/2}(\overline{X}_{\Gamma_{n}} ,z) \sqrt{\phi \circ V( \overline{X}_{\Gamma_{n}} ,\zeta_{\Gamma_{n}}) }  )\\
\leqslant & C\gamma_{n+1}^{1/2} V^{p}(\overline{X}_{\Gamma_{n}} ,\zeta_{\Gamma_{n}}) 
\end{align*}
where $C>0$ is a constant which may change from line to line. We deduce that there exists $\varepsilon: \mathbb{R}_+ \to \mathbb{R}_+$ satisfying $\lim\limits_{\gamma \to 0} \varepsilon(\gamma)=0$, such that we have
\begin{align*}
\mathbb{E}[  V^p(\overline{X}_{\Gamma_{n+1}}, \zeta_{\Gamma_{n+1}})-&  V^p(\overline{X}_{\Gamma_{n+1}}, \zeta_{\Gamma_{n}})  \vert  \overline{X}_{\Gamma_{n}}, \zeta_{\Gamma_{n}}  ]  \\
=& \gamma_{n+1} \sum_{z=1}^{M_0} \big( q_{\zeta_{\Gamma_{n}}  ,z}+  o(\gamma_{n+1} ) \big) \mathbb{E} [V^p(\overline{X}_{\Gamma_{n+1}},z)  \vert \overline{X}_{\Gamma_{n}}, \zeta_{\Gamma_{n}} ]  \\
\leqslant &     \gamma_{n+1} \sum_{z=1}^{M_0} ( q_{\zeta_{\Gamma_{n}}  ,z}+ \varepsilon(\gamma_{n+1}) ) V^p(\overline{X}_{\Gamma_{n}},z).
\end{align*}
%
%
%
This yields (\ref{eq:recursive_control_MS_fonction_pol}) as a direct consequence of $\mathcal{R}_p(\alpha,\beta,\phi,V)$ (see (\ref{hyp:recursive_control_param_MS}) and (\ref{hyp:recursive_control_param_terme_ordre_sup_MS})). The proof of (\ref{eq:mom_pol_MS}) is an immediate application of Lemma \ref{lemme:mom_psi_V} as soon as we notice that the increments of the Euler scheme (for Markov Switching diffusions) have finite polynomial moments which implies (\ref{eq:mom_psi_V}).
%
%
%
%
\end{proof}
\paragraph{Test functions with exponential growth \\}
In this section we do not relax the assumption on the Gaussian structure of the increment as we do in the polynomial case with hypothesis  (\ref{hyp:matching_normal_moment_ordre_q_va_schema_MS}) and (\ref{hyp:moment_ordre_p_va_schema_MS}). In particular, it leads the following result:
\begin{lemme}
\label{lemme:transformee_laplace_prod_gauss}
Let $\Lambda \in \mathbb{R}^{d \times d}$ and $U \sim \mathcal{N}(0,I_d)$. We define $\Sigma \in \mathbb{R}^{d \times d}$ by $\Sigma=I_d-2 \Lambda^{\ast} \Lambda$. Assume that $\Sigma \in \mathcal{S}_{+,\ast}^d$. Then,  for every $h \in (0,1)$,
\begin{align}
\label{eq:transformee_laplace_jointe}
\forall v \in \mathbb{R}^d , \qquad \mathbb{E}\Big[\exp \Big( \sqrt{h}  \langle v , U \rangle    + h \vert \Lambda U \vert^2 \Big) \Big]  \leqslant   \exp\Big( \frac{h}{2(1-h) } \vert v \vert^2    \Big) \det( \Sigma)^{-h/2}.
\end{align}
%
%
\end{lemme}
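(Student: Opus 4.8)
The plan is to reduce the statement to the classical Gaussian Laplace transform identity. Set $B := \Lambda^{\ast}\Lambda$, a symmetric positive semi-definite matrix, so that $|\Lambda U|^2 = U^{\ast} B U$ and $\Sigma = I_d - 2B$. I would first recall that for any symmetric $A$ with $I_d - 2A$ positive definite and any $w \in \mathbb{R}^d$, completing the square in the Gaussian density gives
\[
\mathbb{E}\big[\exp(\langle w, U\rangle + U^{\ast} A U)\big] = \det(I_d - 2A)^{-1/2}\exp\Big(\tfrac12 w^{\ast}(I_d - 2A)^{-1}w\Big).
\]
Applying this with $w = \sqrt{h}\,v$ and $A = hB$, the key structural remark is the identity $I_d - 2hB = (1-h)I_d + h\Sigma$, which is positive definite since $h\in(0,1)$, $(1-h)I_d \succeq 0$ and $h\Sigma \succ 0$ by hypothesis. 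Hence the left-hand side of (\ref{eq:transformee_laplace_jointe}) equals $\det\big((1-h)I_d + h\Sigma\big)^{-1/2}\exp\big(\tfrac{h}{2}\,v^{\ast}((1-h)I_d + h\Sigma)^{-1}v\big)$, and it remains to bound the two factors separately.

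For the exponential factor, $h\Sigma \succ 0$ gives $(1-h)I_d + h\Sigma \succeq (1-h)I_d$, hence $((1-h)I_d + h\Sigma)^{-1} \preceq (1-h)^{-1}I_d$, so $v^{\ast}((1-h)I_d + h\Sigma)^{-1}v \leqslant (1-h)^{-1}|v|^2$, producing the factor $\exp\big(\tfrac{h}{2(1-h)}|v|^2\big)$. For the determinant factor, I would diagonalize $\Sigma$ with eigenvalues $\lambda_1,\dots,\lambda_d > 0$; the eigenvalues of $(1-h)I_d + h\Sigma$ are the numbers $(1-h)\cdot 1 + h\lambda_i$, and the weighted arithmetic--geometric mean inequality (equivalently concavity of $\log$) yields $(1-h)\cdot 1 + h\lambda_i \geqslant 1^{1-h}\lambda_i^{\,h} = \lambda_i^{\,h}$. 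Taking the product over $i$ gives $\det\big((1-h)I_d + h\Sigma\big) \geqslant \det(\Sigma)^h$, hence $\det\big((1-h)I_d + h\Sigma\big)^{-1/2} \leqslant \det(\Sigma)^{-h/2}$. Multiplying the two bounds establishes (\ref{eq:transformee_laplace_jointe}).

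There is no genuine obstacle here: the only points requiring care are the positive definiteness of $I_d - 2hB$, handled by the convex-combination identity $I_d - 2hB = (1-h)I_d + h\Sigma$, and the determinant comparison, where the weighted AM--GM step applied to the eigenvalues is the crux. I would also point out that this estimate is precisely what is needed to control the exponential moment of a single Euler step in the proof of Theorem \ref{th:cv_exp_MS}, the matrix $\Lambda$ there being built from $\|D^2 V\|_{\infty}^{1/2}$ and a suitably rescaled diffusion coefficient $\sigma(\cdot,z)$.
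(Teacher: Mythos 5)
Your proof is correct, and it takes a genuinely different route from the paper. You compute the joint Laplace transform $\mathbb{E}[\exp(\langle w,U\rangle+U^{\ast}AU)]$ exactly by completing the square (with $w=\sqrt{h}\,v$, $A=h\Lambda^{\ast}\Lambda$), exploit the convex-combination identity $I_d-2h\Lambda^{\ast}\Lambda=(1-h)I_d+h\Sigma$, and then bound the two factors separately: the quadratic form via $(1-h)I_d+h\Sigma\succeq(1-h)I_d$, and the determinant via the weighted AM--GM inequality $(1-h)+h\lambda_i\geqslant\lambda_i^{h}$ applied to the eigenvalues of $\Sigma$, i.e.\ $\det\big((1-h)I_d+h\Sigma\big)\geqslant\det(\Sigma)^{h}$. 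The paper proceeds differently: it first computes only the pure quadratic transform $\mathbb{E}[\exp(|\Lambda U|^2)]=\det(\Sigma)^{-1/2}$, then applies H\"older's inequality with exponents $1/(1-h)$ and $1/h$ to split the linear and quadratic terms, so that only two elementary Gaussian computations are needed and no diagonalization or matrix comparison arises. The H\"older argument is shorter; your argument has the advantage of exhibiting the exact value of the left-hand side as an intermediate step, which localizes precisely where the inequality loses sharpness (the two matrix bounds, the determinant one being in effect the log-concavity of $t\mapsto\log\det((1-t)I_d+t\Sigma)$, i.e.\ a multiplicative H\"older in disguise). Both routes yield exactly the stated bound, and your closing remark about how the lemma is used in the proof of Theorem \ref{th:cv_exp_MS} matches the paper's application.
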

\begin{proof}
A direct computation yields
\begin{align*}
\mathbb{E}[\exp ( \vert \Lambda U\vert^2  )] = & \int_{\mathbb{R}^d} (2 \pi)^{-d/2}\exp \Big( -\frac{1}{2} \langle -2\Lambda^{\ast} \Lambda u+u,u \rangle \Big) du = \det( \Sigma)^{-1/2}.
\end{align*}
Now, (\ref{eq:transformee_laplace_jointe}) follows from the H\"older inequality since
\begin{align*}
 \mathbb{E}[\exp (  \sqrt{h}  \langle v , U \rangle    + h \vert \Lambda U\vert^2  )]   \leqslant  & \mathbb{E}\Big[\exp \Big( \frac{\sqrt{h}}{1-h}  \langle v , U \rangle \Big)\Big] ^{1-h}   \mathbb{E}[\exp (  \vert \Lambda U\vert^2 )]^h \\
  =& \exp\Big( \frac{h}{2(1-h) } \vert v \vert^2    \Big) \det( \Sigma)^{-h/2}.
\end{align*}
\end{proof}
Using those results, we deduce the recursive control for exponential test functions. 
\begin{myprop}
\label{prop:recursive_control_MS_exp}
Let $v_{\ast}>0$, and let $\phi:[v_{\ast},\infty )\to \mathbb{R}_+$ be a continuous function such that $C_{\phi}:= \sup_{y \in [v_{\ast},\infty )}\phi(y)/y< +\infty$.  Now let $p \in [0,1]$, $\lambda \geqslant 0$ and define $\psi(y)=\exp ( \lambda y^p )$, $y\in \mathbb{R}_+$. \\

Suppose that (\ref{hyp:Lyapunov_control_MS}), (\ref{hyp:Lyapunov_control_MS_unif_expo}), $\mathfrak{B}(\phi)$ (see (\ref{hyp:controle_coefficients_MS})) and $\mathcal{R}_{p, \lambda}(\alpha,\beta,\phi,V)$ (see (\ref{hyp:recursive_control_param_MS_expo})) are satisfied.\\

 Then, for every $\tilde{\alpha} \in (0,\alpha)$, there exists $\tilde{\beta} \in \mathbb{R}_+$ and $n_0 \in \mathbb{N}^{\ast}$, such that
\begin{align}
\label{eq:recursive_control_milstein_fonction_expo}
 \forall n   \geqslant n_0, \forall x \in \mathbb{R}^d,  \quad\widetilde{A}_{\gamma_n} \psi \circ V(x)\leqslant \frac{ \psi \circ V(x)}{V(x)}p\big(\tilde{\beta} - \tilde{\alpha} \phi\circ V(x)\big). 
\end{align}
Then, $\mathcal{RC}_{Q,V}(\psi,\phi,p\tilde{\alpha},p\tilde{\beta})$ (see (\ref{hyp:incr_sg_Lyapunov})) holds as soon as $\liminf\limits_{y \to + \infty} \phi(y) =+\infty$. Moreover, when $\phi=Id$ we have
\begin{align}
\label{eq:mom_exp_MS}
\sup_{n \in \mathbb{N}} \mathbb{E}[\psi \circ V (\overline{X}_{\Gamma_{n}})] < + \infty.
\end{align}
\end{myprop}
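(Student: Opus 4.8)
The plan is to follow the scheme of Proposition~\ref{prop:recursive_control_MS}, using two simplifications specific to the exponential case. First, assumption (\ref{hyp:Lyapunov_control_MS_unif_expo}) makes $V$ independent of the regime, so $\psi\circ V$ depends on the $\mathbb{R}^d$-coordinate only and $\sum_{w=1}^{M_0}q_{z,w}\,\psi\circ V(x,w)=\psi\circ V(x)\sum_w q_{z,w}=0$: the switching part of the dynamics contributes nothing to $\widetilde{A}_{\gamma_n}(\psi\circ V)$, so it suffices to control a single step of the plain Euler scheme, the regime $\zeta_{\Gamma_n}$ appearing only as a frozen parameter inside $b$ and $\sigma$. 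Writing $\Delta=\overline{X}_{\Gamma_{n+1}}-\overline{X}_{\Gamma_n}=\gamma_{n+1}b+\sqrt{\gamma_{n+1}}\,\sigma U_{n+1}$ (coefficients evaluated at $(\overline{X}_{\Gamma_n},\zeta_{\Gamma_n})$), the quantity to estimate is $\widetilde{A}_{\gamma_{n+1}}(\psi\circ V)(\overline{X}_{\Gamma_n})=\gamma_{n+1}^{-1}\psi\circ V(\overline{X}_{\Gamma_n})\big(\mathbb{E}[e^{\lambda(V(\overline{X}_{\Gamma_{n+1}})^p-V(\overline{X}_{\Gamma_n})^p)}\mid\overline{X}_{\Gamma_n},\zeta_{\Gamma_n}]-1\big)$.

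Second, since $p\in[0,1]$ the map $y\mapsto y^p$ is concave, hence $V(\overline{X}_{\Gamma_{n+1}})^p\leq V(\overline{X}_{\Gamma_n})^p+p\,V(\overline{X}_{\Gamma_n})^{p-1}\big(V(\overline{X}_{\Gamma_{n+1}})-V(\overline{X}_{\Gamma_n})\big)$, and a second-order Taylor expansion of $V$ with (\ref{hyp:Lyapunov_control_MS}) gives $V(\overline{X}_{\Gamma_{n+1}})-V(\overline{X}_{\Gamma_n})\leq\langle\nabla V,\Delta\rangle+\tfrac12\Vert D^2V\Vert_\infty|\Delta|^2$. Substituting $\Delta$ and expanding, the exponent becomes the sum of a deterministic part of order $\gamma_{n+1}$ (carrying $\gamma_{n+1}\lambda p V^{p-1}\langle\nabla V,b\rangle$ and $O(\gamma_{n+1}^2)$ drift terms), a part $\sqrt{\gamma_{n+1}}\langle v_n,U_{n+1}\rangle$ with $v_n=\lambda p V^{p-1}(\sigma^\ast\nabla V+\gamma_{n+1}\Vert D^2V\Vert_\infty\sigma^\ast b)$, and a part $\gamma_{n+1}|\Lambda_n U_{n+1}|^2$ with $\Lambda_n^\ast\Lambda_n$ proportional to $V^{p-1}\sigma^\ast\sigma$. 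As the increments are kept Gaussian, Lemma~\ref{lemme:transformee_laplace_prod_gauss} applies after a Hölder splitting that rescales the quadratic term by the weight $C_\sigma$, so that the matrix occurring is exactly $\Sigma=I_d-\Vert D^2V\Vert_\infty C_\sigma V^{p-1}\sigma^\ast\sigma\in\mathcal{S}^d_{+,\ast}$; the condition on the step required by the lemma holds for $n$ large since $\gamma_n\to0$ and $\inf C_\sigma>0$. Taking the conditional expectation then yields a bound $\exp(\gamma_{n+1}G_n)$ where, up to an $O(\gamma_{n+1})$ error, $G_n$ is a combination of $\langle\nabla V,b\rangle$, a term produced by $|v_n|^2$ and a term produced by $\ln\det\Sigma$; the normalisations built into $\kappa_p$ and $\chi_p$ are precisely such that this combination equals $\lambda p V^{p-1}$ times $\langle\nabla V,b+\kappa_p\rangle+\tfrac12\chi_p$, which assumption $\mathcal{R}_{p,\lambda}(\alpha,\beta,\phi,V)$ (see (\ref{hyp:recursive_control_param_MS_expo})) bounds by $\beta-\alpha\phi\circ V$.

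Invoking $\mathcal{R}_{p,\lambda}(\alpha,\beta,\phi,V)$ and bounding the leftover $O(\gamma_{n+1})$ terms — the higher-order drift contributions and the discrepancies of the Hölder/determinant accounting — with $\mathfrak{B}(\phi)$ (see (\ref{hyp:controle_coefficients_MS})), $|\nabla V|^2\leq C_V V$ and, for the cross-terms carrying $b$, assumption (\ref{hyp:dom_recurs_MS}), one gets for $n\geq n_0$ that those leftovers are $\leq\lambda p V^{p-1}(\alpha-\tilde\alpha)\phi\circ V$, which is absorbed into the mean-reverting gap; replacing the remaining locally bounded, eventually nonpositive residual by a constant $\tilde\beta\geq0$ gives $G_n\leq\lambda p V^{p-1}(\tilde\beta-\tilde\alpha\phi\circ V)$. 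Since this right-hand side is $\leq0$ off a compact set, $e^{\gamma_{n+1}G_n}-1\leq\gamma_{n+1}G_n$ there while $G_n$ stays bounded on the compact; multiplying back by $\psi\circ V(\overline{X}_{\Gamma_n})/\gamma_{n+1}$ and enlarging $\tilde\beta$ if needed yields (\ref{eq:recursive_control_milstein_fonction_expo}), which is exactly $\mathcal{RC}_{Q,V}(\psi,\phi,p\tilde\alpha,p\tilde\beta)\,(i)$; part $(ii)$ follows from $\liminf_{y\to\infty}\phi(y)=+\infty>\tilde\beta/\tilde\alpha$. When $\phi=Id$, (\ref{eq:mom_exp_MS}) follows from Lemma~\ref{lemme:mom_psi_V}, whose hypothesis $\mathbb{E}[\psi\circ V(\overline{X}_{\Gamma_{n_0}})]<\infty$ holds because $\overline{X}_{\Gamma_{n_0}}$ is a finite sum of conditionally Gaussian increments, so $\mathbb{E}[e^{\lambda V(\overline{X}_{\Gamma_{n_0}})^p}]<\infty$ for $p\in[0,1]$ (with $\lambda$ small enough in the borderline case $p=1$).

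The delicate point is the bookkeeping underlying the application of $\mathcal{R}_{p,\lambda}$: matching, term by term, the one-step exponential estimate delivered by Lemma~\ref{lemme:transformee_laplace_prod_gauss} with the exact combination $\langle\nabla V,b+\kappa_p\rangle+\tfrac12\chi_p$ built into $\mathcal{R}_{p,\lambda}$, guaranteeing $\Sigma\succ0$ through the auxiliary weight $C_\sigma$, and checking that every higher power of $\gamma_{n+1}$ and every drift cross-term can be dominated — which is exactly what (\ref{hyp:dom_recurs_MS}) and the slack $\alpha-\tilde\alpha$ are there to provide.
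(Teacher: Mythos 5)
Your proposal follows the paper's route almost step for step: concavity of $y\mapsto y^p$ plus a second order Taylor expansion of $V$ under (\ref{hyp:Lyapunov_control_MS}), the splitting of the one-step exponent into a deterministic drift part and the Gaussian terms $\sqrt{\gamma}\langle v,U\rangle+\gamma\vert\Lambda U\vert^2$, the application of Lemma \ref{lemme:transformee_laplace_prod_gauss} with the weight $C_{\sigma}$ and the matrix $\Sigma$, and the absorption of the higher-order terms via $\mathfrak{B}(\phi)$, (\ref{hyp:dom_recurs_MS}) and the slack $\alpha-\tilde{\alpha}$ provided by $\mathcal{R}_{p,\lambda}(\alpha,\beta,\phi,V)$. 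Up to that point the two arguments coincide, and your observation that (\ref{hyp:Lyapunov_control_MS_unif_expo}) removes the switching contribution is correct.

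There is, however, a genuine gap at the one step that is actually delicate: converting the exponential-moment bound $\mathbb{E}[\psi\circ V(\overline{X}_{\Gamma_{n+1}})\mid\cdot]\leqslant \psi\circ V(\overline{X}_{\Gamma_n})\,e^{\gamma_{n+1}G_n}$ into the linear-in-$\gamma$ control (\ref{eq:recursive_control_milstein_fonction_expo}). You justify it by writing ``$e^{\gamma_{n+1}G_n}-1\leqslant\gamma_{n+1}G_n$ where $G_n\leqslant 0$'', but this inequality is false: $e^{u}\geqslant 1+u$ for every real $u$, so $e^{\gamma G_n}-1\geqslant\gamma G_n$, i.e.\ the exponential \emph{weakens} the mean reversion rather than preserving it. Moreover $\gamma G_n$ is not bounded below uniformly in $x$ (it behaves like $-\gamma\lambda p\alpha\,V^{p-1}\phi\circ V$, which may be unbounded since $V^{p-1}\phi\circ V\leqslant C_{\phi}V^{p}$), so a chord bound $e^{u}-1\leqslant c\,u$ on a fixed interval $[-M,0]$ does not apply either, and the effective reversion coefficient $\frac{e^{\gamma G_n}-1}{\gamma G_n}$ can be arbitrarily small. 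This is precisely where the paper's proof does real work: it rewrites the exponent as a convex combination with weight $\theta=\gamma p\alpha\,\phi\circ V/V\in[0,1)$ (valid once $\gamma p\alpha C_{\phi}<1$), uses $\exp((1-\theta)a+\theta b)\leqslant(1-\theta)e^{a}+\theta e^{b}$, and then shows, using $\liminf_{y\to\infty}\phi(y)>\beta_+/\alpha$ and $\gamma$ small, that the residual term $\exp\big(V^{p}\big(\tfrac{\lambda\beta}{\alpha\phi\circ V}+\gamma\overline{C}/(\alpha p)\big)\big)$ can be dominated by $\tfrac{\tilde{\beta}}{\alpha}\tfrac{e^{\lambda V^{p}}}{V}+\tfrac{\alpha-\tilde{\alpha}}{\alpha}\tfrac{\phi\circ V}{V}e^{\lambda V^{p}}$. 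Some substitute for this argument (a careful two-regime analysis of $\gamma^{-1}(1-e^{\gamma G_n})$ could also be made to work) is needed; ``enlarging $\tilde{\beta}$'' only fixes the compact region, not the region where $\gamma\vert G_n\vert$ is large. The rest of your write-up (including the use of Lemma \ref{lemme:mom_psi_V} for (\ref{eq:mom_exp_MS})) is fine.
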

\begin{proof}
When $p=0$, the result is straightforward. Since $p \leqslant 1$, the function defined on $\mathbb{R}_+$ by  $y \mapsto y^p$ is concave. Using then the Taylor expansion at order 2 of the function $V$, we have, for every $x,y \in \mathbb{R}^d$,
\begin{align*}
V^p(y) - V^p(x) \leqslant & pV^{p-1}(x) \big(V(y)-V(x) \big) \\
\leqslant & pV^{p-1}(x) \big( \langle \nabla V(x),y-x \rangle +\frac{1}{2} \Vert D^2V \Vert_{\infty} \vert y - x \vert^2 \big).
\end{align*}
Using this inequality with $x=\overline{X}_{\Gamma_{n}}$ and $y=\overline{X}_{\Gamma_{n+1}}=\overline{X}_{\Gamma_{n}}+\Delta \overline{X}^1_{n+1} +\Delta \overline{X}^2_{n+1} $, with notations (\ref{def:incr_MS}), we derive 
\begin{align*}
V^p(\overline{X}_{\Gamma_{n}} &+ \Delta \overline{X}_{n+1} ) - V^p(\overline{X}_{\Gamma_{n}}) \\
 \leqslant & pV^{p-1}(\overline{X}_{\Gamma_{n}} ) \langle \nabla V(\overline{X}_{\Gamma_{n}}),\Delta \overline{X}^1_{n+1} +\Delta \overline{X}^2_{n+1} \rangle \\
 & + \frac{1}{2}  pV^{p-1}(\overline{X}_{\Gamma_{n}} ) \Vert D^2V \Vert_{\infty} ( \vert  \Delta \overline{X}^1_{n+1} \vert^2+\vert  \Delta \overline{X}^2_{n+1} \vert^2 + 2 \langle \Delta \overline{X}^1_{n+1}  , \Delta \overline{X}^2_{n+1}\rangle ) .
\end{align*}
%
%
%
%
It follows that
\begin{align*}
\mathbb{E}[\exp(\lambda V^p(\overline{X}_{\Gamma_{n+1}})) \vert \overline{X}_{\Gamma_{n}},\zeta_{\Gamma_{n}}] \leqslant H_{\gamma_{n+1}}(\overline{X}_{\Gamma_{n}},\zeta_{\Gamma_{n}})L_{\gamma_{n+1}}(\overline{X}_{\Gamma_{n}},\zeta_{\Gamma_{n}})
\end{align*}
with, for every $x \in \mathbb{R}^d$, every $z\in \{1,\ldots,M_0\}$ and every $\gamma \in \mathbb{R}_+^{\ast}$,
\begin{align*}
H_{\gamma}(x,z)=&\exp(\lambda V^p(x)+\gamma \lambda pV^{p-1}(x) \langle \nabla V(x), b(x,z) \rangle \\
& +\gamma^2  \frac{1}{2} \lambda   p \Vert D^2 V \Vert_{\infty} V^{p-1}(x)\vert b (x,z) \vert^2 )
\end{align*}
and
\begin{align*}
L_{\gamma}(x,z)= &\mathbb{E}[\exp( \sqrt{\gamma}  \lambda p V^{p-1}(x) \langle \nabla V(x)+\gamma \Vert D^2 V \Vert_{\infty}  b(x,z) , \sigma(x,z)  U \rangle \\
&+ \frac{1}{2} \gamma  \lambda  p  \Vert D^2 V  \Vert_{\infty}   V^{p-1}(x) \vert \sigma (x,z)  U \vert^2  )]
\end{align*}
where $U\sim \mathcal{N}(0,I_d)$. In order to compute $L_{\gamma}(x,z)$, we use Lemma \ref{lemme:transformee_laplace_prod_gauss} (see (\ref{eq:transformee_laplace_jointe})) with parameters $h=C_{\sigma}(x,z)^{-1} \gamma \lambda p $, $v=\sqrt{C_{\sigma}(x)\lambda p }V^{p-1}(x)\sigma^{\ast}(x,z)(  \nabla V(x)+ \gamma    \Vert D^2 V \Vert_{\infty}   b(x,z) )$ and the matrix 
\begin{align*}
\Sigma(x,z)=I_d- \Vert D^2 V  \Vert_{\infty}  C_{\sigma}(x,z)V^{p-1}(x)  \sigma^{\ast} \sigma(x,z)
\end{align*},
 where $\inf_{x \in \mathbb{R}^d} \inf_{z \in \{1,\ldots,M_0\}}C_{\sigma}(x,z)>0$ and $\Sigma(x,z) \in \in \mathcal{S}_{+, \ast}^d$i. It follows from (\ref{eq:transformee_laplace_jointe}) and $h/(2(1-h))\leqslant h$ for $h \in (0,1/2]$, that for every $\gamma \leqslant \inf_{x \in \mathbb{R}^d} \inf_{z \in \{1,\ldots,M_0\}}C_{\sigma}(x,z)/(2 \lambda p)$,
\begin{align*}
L_\gamma(x,z) \leqslant &\exp \Big(\frac{\gamma \lambda p  C_{\sigma}(x,z)^{-1}}{2(1 -\gamma \lambda p  C_{\sigma}(x,z)^{-1} )} \vert v \vert^2- \frac{1}{2} \gamma \lambda p  C_{\sigma}(x,z)^{-1} \ln ({\det(\Sigma(x,z))})  \Big) \\
\leqslant & \exp \Big(\gamma  \lambda p  C_{\sigma}(x,z)^{-1} \vert v \vert^2- \frac{1}{2} \gamma \lambda p  C_{\sigma}(x,z)^{-1} \ln ({\det(\Sigma(x,z))})  \Big)
\end{align*}
At this point, we focus on the first term inside the exponential. We have
\begin{align*}
\vert v \vert^2 \leqslant & C_{\sigma}(x,z) \lambda p  V^{2p-2}(x)   \big( \langle \sigma \sigma^{\ast} (x,z)\nabla V(x),\nabla V(x)\rangle \\
&+\Tr[\sigma \sigma^{\ast}(x,z)]( \gamma   \Vert D^2 V \Vert_{\infty} 2  \langle \nabla V(x), b(x,z)  \rangle + \gamma^2  \Vert D^2 V \Vert_{\infty}^2 \vert   b(x,z) \vert^2 ) \big)
\end{align*}
Using $\mathfrak{B}(\phi)$ (see (\ref{hyp:controle_coefficients_MS})), (\ref{hyp:dom_recurs_MS}) and  $\mathcal{R}_{p, \lambda}(\alpha,\beta,\phi,V)$ (see (\ref{hyp:recursive_control_param_MS_expo})), it follows that there exists $\overline{C}>0$ such that
\begin{align*}
H_{\gamma}(x,z)L_{\gamma}(x,z) \leqslant & \exp \big(\lambda V^p(x)+\gamma \lambda p  V^{p-1}(x)(\beta - \alpha \phi \circ V(x))+\overline{C} \gamma^2 V^{p-1}(x) \phi \circ V(x) \big) 
\end{align*}
which can be rewritten
\begin{align*}
H_{\gamma}(x,z)L_{\gamma}(x,z) \leqslant & \exp \Big( \Big(1-\gamma p \alpha \frac{\phi \circ V(x)}{V(x)} \Big) \lambda V^{p}(x) \\
& \quad \quad + \gamma p  \alpha \frac{\phi \circ V(x)}{V(x)}  V^p(x) \Big( \frac{ \lambda  \beta }{ \alpha \phi \circ V(x)}+\gamma \overline{C}/(\alpha p) \Big) \Big ) .
\end{align*} 
Using the convexity of the exponential function, we have for every $\gamma p \alpha C_{\phi}<1$,
\begin{align*}
H_{\gamma}(x,z)L_{\gamma}(x,z) \leqslant & \exp \big(\lambda V^p(x)\big) -\gamma p \alpha \frac{\phi \circ V(x)}{V(x)}  \exp\big(\lambda V^p(x)\big)\\
&+ \gamma p \alpha \frac{\phi \circ V(x)}{V(x)} \exp \Big(V^p(x)\Big( \frac{\lambda  \beta }{ \alpha \phi \circ V(x)} +\gamma \overline{C}/(\alpha p) \Big) \Big).
\end{align*} 
It remains to study the last term of the $r.h.s$ of the above inequality. The function defined on $[v_{\ast},+\infty)$ by $y \mapsto \exp(y^p(\frac{\lambda \beta }{\alpha\phi (y)}+\gamma \overline{C}/(\alpha p) ))$ is continuous and locally bounded. Moreover, by $\mathcal{R}_{p, \lambda}(\alpha,\beta,\phi,V)$ (see (\ref{hyp:recursive_control_param_MS_expo})), we have $\liminf\limits_{y \to + \infty} \phi(y)>\beta_+/\alpha$. Hence, there exists $\zeta \in (0,1)$ and $y_{\zeta}\geqslant v_{\ast}$ such that $\phi(y) \geqslant \beta_+/(\alpha \zeta)$ for every $y \geqslant y_{\zeta}$. Consequently, as soon as $\gamma <\zeta \lambda \alpha p/ \overline{C}$,  for every $\tilde{\alpha}\in (0, \alpha)$ there exists $\tilde{\beta} \geqslant 0$ such that
\begin{align*}
\frac{\phi \circ V(x)}{V(x)} \exp \Big(V^p(x)\Big( \frac{\lambda  \beta }{ \alpha \phi \circ V(x)} +\gamma \overline{C}/(\alpha p) \Big) \Big)\leqslant& \frac{\tilde{\beta}}{\alpha} \frac{\exp(\lambda V^p(x))}{V(x)} \\
&+\frac{\alpha -\tilde{\alpha}}{\alpha}  \frac{\phi \circ V(x)}{V(x)} \exp(\lambda V^p(x))
\end{align*}
and the proof of the recursive control (\ref{eq:recursive_control_milstein_fonction_expo}) is completed. Finally (\ref{eq:mom_exp_MS}) follows from (\ref{eq:mom_psi_V}), which follow from the equation above, and Lemma \ref{lemme:mom_psi_V}.
\end{proof}
\subsubsection{Infinitesimal control}
\begin{myprop}
\label{prop:MS_infinitesimal_approx}
Suppose that the sequence $(U_n)_{n \in \mathbb{N}^{\ast}}$ satisfies $M_{\mathcal{N},2}(U)$ (see (\ref{hyp:matching_normal_moment_ordre_q_va_schema_MS})). Also assume that for every $z \in \{1,\ldots,M_0\}$, $b(.,z)$ and $\sigma(.,z)$ have sublinear growth and that $\sup_{n \in \mathbb{N}^{\ast}} \nu_n^{\eta}( \Tr[\sigma\sigma^{\ast}] )< + \infty, \; a.s.$ \\

Then, $\mathcal{E}(\widetilde{A},A,\DomA_0) $ (see (\ref{hyp:erreur_tems_cours_fonction_test_reg})) is fulfilled.
%
%
%
\end{myprop}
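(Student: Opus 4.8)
The plan is to verify the representation assumption $\mathcal{E}(\widetilde{A},A,\DomA_0)$ (see (\ref{hyp:erreur_tems_cours_fonction_test_reg})) directly, by a second order Taylor expansion of one step of the scheme, exploiting that every $f\in\DomA_0$ has $f(\cdot,z)\in\mathcal{C}^2_K(\mathbb{R}^d)$ for each $z$. Let $K:=\bigcup_{z=1}^{M_0}\supp f(\cdot,z)$ (compact), let $K'$ be a compact neighbourhood of $K$ and put $d_x:=\mathrm{dist}(x,K)$. Write $\overline{X}_\gamma$ for one step of size $\gamma\in(0,\overline\gamma]$ started from $(x,z)$, so $\overline{X}_\gamma-x=\gamma b(x,z)+\sqrt\gamma\,\sigma(x,z)U$ with $\mathbb{E}[U]=0$ and $\mathbb{E}[UU^\top]=I_d$ by $M_{\mathcal{N},2}(U)$ (see (\ref{hyp:matching_normal_moment_ordre_q_va_schema_MS})), while the mode satisfies $\mathbb{P}(\zeta_\gamma=w\mid\zeta_0=z)=(e^{\gamma\mathfrak{Q}})_{z,w}=\delta_{z,w}+\gamma q_{z,w}+O(\gamma^2)$. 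Using that $W$ and $\zeta$ are independent I would split
\[
\mathscr{Q}_{\gamma}f(x,z)-f(x,z)=\mathbb{E}\big[f(\overline{X}_\gamma,z)-f(x,z)\big]+\mathbb{E}\big[f(\overline{X}_\gamma,\zeta_\gamma)-f(\overline{X}_\gamma,z)\big].
\]

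In the diffusion term, the second order Taylor expansion of $f(\cdot,z)$ at $x$, together with $\mathbb{E}[U]=0$, $\mathbb{E}[UU^\top]=I_d$, reproduces exactly $\gamma\big(\langle b,\nabla_x f\rangle+\tfrac12\sum_{i,j}(\sigma\sigma^\ast)_{ij}\partial^2_{ij}f\big)(x,z)$, up to the drift--squared term $\tfrac{\gamma^2}{2}D^2_xf(x,z)b(x,z)^{\otimes2}$ --- which vanishes off $K$, hence is $\le C_f\gamma^2\mathds{1}_K(x)$ --- and the Taylor remainder $\tfrac12\mathbb{E}\big[(D^2_xf(\xi,z)-D^2_xf(x,z))(\overline{X}_\gamma-x)^{\otimes2}\big]$, $\xi\in(x,\overline{X}_\gamma)$. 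In the switching term, since $\zeta_\gamma$ is independent of $\overline{X}_\gamma$ given $\zeta_0=z$ one gets $\sum_w\big((e^{\gamma\mathfrak{Q}})_{z,w}-\delta_{z,w}\big)f(\overline{X}_\gamma,w)$, which reproduces $\gamma\sum_w q_{z,w}f(x,w)$ up to an $O(\gamma^2\|f\|_\infty)$ term (again supported near $K$) and a first order term bounded by $\gamma\|\nabla_x f\|_\infty\,\mathbb{E}\big[|\overline{X}_\gamma-x|\,\mathds{1}_{[x,\overline{X}_\gamma]\cap K\ne\emptyset}\big]$. Adding the two and recalling that these leading terms are precisely $\gamma Af(x,z)$, after dividing by $\gamma$ I would read off a bound $|\widetilde{A}_\gamma f(x,z)-Af(x,z)|\le\Lambda_f(x,z,\gamma)$.

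To put $\Lambda_f$ in the required form I would take $g=(\mathbf{1},\ \Tr[\sigma\sigma^\ast(\cdot,\cdot)])$: the hypotheses $\nu_n^\eta(\mathbf{1})\equiv1$ and $\sup_n\nu_n^\eta(\Tr[\sigma\sigma^\ast])<+\infty$ a.s. give $\sup_n\nu_n^\eta(g_i)<+\infty$ a.s. The pieces that are supported on $K'$ and $O(\gamma^{1/2})$ or $O(\gamma)$ there (drift--squared, the $O(\gamma^2)$ switching term, the contributions with $x\in K'$) pair with $g_1=\mathbf{1}$; using $\mathbb{E}[|\overline{X}_\gamma-x|^2]=\gamma^2|b|^2+\gamma\Tr[\sigma\sigma^\ast]\to0$ as $\gamma\to0$, the uniform continuity of $D^2_xf$, and $|U|^2\in \LL^1$ (so $\mathbb{E}[|U|^2\mathds{1}_{|\overline{X}_\gamma-x|>\delta}]\to0$ as $\gamma\to0$ on compacts), they satisfy both parts of the first representation condition (\ref{hyp:erreur_temps_cours_fonction_test_reg_Lambda_representation_2_1}). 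The remaining, far--field pieces pair with $g_2=\Tr[\sigma\sigma^\ast]$, with the corresponding $\tilde\Lambda_f$ built from truncated moments of $U$; these have $\sup_{(x,z),\gamma}\tilde\Lambda_{f,i}\le\mathbb{E}[|U|^2]<+\infty$ and tend to $0$ as $|x|\to+\infty$ uniformly for $\gamma\le\underline\gamma$ (which may be taken small and deterministic).

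The main obstacle is controlling the Taylor remainder $\tfrac1\gamma\mathbb{E}\big[\|D^2_xf(\xi,z)-D^2_xf(x,z)\|\,|\overline{X}_\gamma-x|^2\big]$ --- and its switching analogue --- for $x$ far from $\supp f$, where $D^2_xf(\xi,z)-D^2_xf(x,z)$ is nonzero only if the increment is large enough to drag $\xi$ into $K$, i.e. on $\{|\overline{X}_\gamma-x|\ge d_x\}$. This is where the sublinear growth of $b(\cdot,z)$ and $\sigma(\cdot,z)$ is essential: since $|b(x,z)|=o(|x|)$ while $d_x\sim|x|$, for $|x|$ large the drift part $\gamma|b(x,z)|$ is $\le d_x/2$ for every $\gamma\le\overline\gamma$, so on the relevant event $|\overline{X}_\gamma-x|\le2\sqrt\gamma\,|\sigma(x,z)|\,|U|$ and $\{|\overline{X}_\gamma-x|\ge d_x\}\subset\{|U|\ge d_x/(2\sqrt\gamma\,|\sigma(x,z)|)\}$; the remainder is then $\le C_f\,\Tr[\sigma\sigma^\ast(x,z)]\,\mathbb{E}\big[|U|^2\mathds{1}_{|U|\ge d_x/(2\sqrt\gamma\,|\sigma(x,z)|)}\big]$, and since $|\sigma(x,z)|=o(|x|)$ makes $d_x/(2\sqrt\gamma\,|\sigma(x,z)|)\to+\infty$, dominated convergence for $|U|^2\in\LL^1$ makes the last factor vanish as $|x|\to+\infty$. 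Collecting everything yields the representation of $\Lambda_f$; the existence of $\widetilde{A}_{\gamma_n}f$ for $f\in\DomA_0$ and the remaining uniform-integrability bookkeeping are routine.
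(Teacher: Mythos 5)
Your proof is correct and follows essentially the same route as the paper: the same split of one step into a diffusion part and a mode-switching part, first/second-order Taylor expansions in $x$ identifying $Af$ through the moment matching $M_{\mathcal{N},2}(U)$, and the same use of the compact support of $f$, the sublinear growth of $b(\cdot,z)$ and $\sigma(\cdot,z)$, the a.s. bound on $\sup_n\nu_n^{\eta}(\Tr[\sigma\sigma^{\ast}])$ and the square integrability of $U$ to cast the error in the form required by $\mathcal{E}(\widetilde{A},A,\DomA_0)$ with $g$ made of a bounded component and of $\Tr[\sigma\sigma^{\ast}]$. The only divergence is organizational: you verify condition \ref{hyp:erreur_tems_cours_fonction_test_reg_Lambda_representation_1} after integrating out $U$ (truncated second moments and a deterministic $\underline{\gamma}$, which relies on reading sublinear growth as $\vert\sigma(x,z)\vert=o(\vert x\vert)$), whereas the paper keeps $(U,\Theta)$ as the auxiliary randomness $\tilde{\omega}$ and checks (i)--(ii) pathwise for fixed $(u,\theta)$ with a realization-dependent $\underline{\gamma}(\tilde{\omega})$, which is exactly the flexibility that lets the argument go through under the weaker bound $\vert b\vert+\vert\sigma\vert\leqslant C(1+\vert x\vert)$ used there.
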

\begin{proof}
First we recall that $\DomA_0 =\{f:\mathbb{R}^d \times \{1,\ldots,M_0 \}, \forall z \in \{1,\ldots,M_0\}, f(.,z)\in\mathcal{C}^2_K(\mathbb{R}^d)\}$ and we write, for $f \in \DomA_0$,
\begin{align*}
f ( \overline{X}_{\Gamma_{n+1}}, \zeta_{\Gamma_{n+1}})-f(\overline{X}_{\Gamma_{n}}, \zeta_{\Gamma_{n}})=&f(\overline{X}_{\Gamma_{n+1}}, \zeta_{\Gamma_{n+1}})- f(\overline{X}_{\Gamma_{n+1}}, \zeta_{\Gamma_{n}}) \\
& +f(\overline{X}_{\Gamma_{n+1}}, \zeta_{\Gamma_{n}})-f(\overline{X}_{\Gamma_{n}}, \zeta_{\Gamma_{n}}).
\end{align*}
We study the first term of the $r.h.s.$ of the above equation. Since $U$ and $\zeta$ are independent, we have, with notation (\ref{def:incr_MS}),
\begin{align*}
\mathbb{E}[ f(\overline{X}_{\Gamma_{n+1}}, \zeta_{\Gamma_{n+1}})- &f(\overline{X}_{\Gamma_{n+1}}, \zeta_{\Gamma_{n}}) \vert \overline{X}_{\Gamma_{n}}, \zeta_{\Gamma_{n}}, \Delta \overline{X}_{n+1}   ] \\
=& \gamma_{n+1} \sum_{z=1}^{M_0} \big( q_{\zeta_{\Gamma_{n}}  ,z}+  o(\gamma_{n+1} ) \big) f(\overline{X}_{\Gamma_{n+1}}, z) .
\end{align*}
Using Taylor expansions at order one and two, for every $z \in \{1,\ldots,M_0\}$ and the fact that the sequence $(U_n)_{n \in \mathbb{N}^{\ast}}$ is $i.i.d.$, we obtain
\begin{align*}
\mathbb{E}[&  f(\overline{X}_{\Gamma_{n+1}}, z) - f(\overline{X}_{\Gamma_{n}}, z)  \vert \overline{X}_{\Gamma_{n}}=x, \zeta_{\Gamma_{n}}]  \\
= & \mathbb{E}[  f(\overline{X}_{\Gamma_{n}}+\Delta \overline{X}^1_{n+1}, z) -  f(\overline{X}_{\Gamma_{n}}, z)  \vert \overline{X}_{\Gamma_{n}}=x,\zeta_{\Gamma_{n}}] \\
&+   \mathbb{E}[  f(\overline{X}_{\Gamma_{n+1}},z) - f(\overline{X}_{\Gamma_{n}}+ \Delta \overline{X}^1_{n+1}, z)  \vert \overline{X}_{\Gamma_{n}}=x ,\zeta_{\Gamma_{n}} ] \\
\leqslant&  \int_0^1 \vert  \nabla_xf(x+  \theta b(x,\zeta_{\Gamma_{n}}) \gamma_{n+1} ,z) \vert  \vert b(x,\zeta_{\Gamma_{n}}) \gamma_{n+1}  \vert d \theta  \\
 & +   \int_0^1 \vert D^2_xf(x+   b(x,\zeta_{\Gamma_{n}}) \gamma_{n+1}+ \theta \sigma(x,\zeta_{\Gamma_{n}}) \sqrt{ \gamma_{n+1}} v ,z) \vert  \vert  \sqrt{ \gamma_{n+1}}  \sigma(x,\zeta_{\Gamma_{n}}) u \vert^2 d \theta \tilde{\mathbb{P}}_{U}(du) .
\end{align*}
where $\tilde{\mathbb{P}}_{U}$ denotes the distribution of $U_1$. Combining the two last inequalities, we derive
\begin{align*}
\gamma_{n+1}^{-1} \mathbb{E}[ f & (\overline{X}_{\Gamma_{n+1}}, \zeta_{\Gamma_{n+1}})-   f(\overline{X}_{\Gamma_{n+1}}, \zeta_{\Gamma_{n}}) \vert \overline{X}_{\Gamma_{n}}, \zeta_{\Gamma_{n}}  ] \\
 \leqslant   & \sum_{z=1}^{M_0} q_{\zeta_{\Gamma_{n}}  ,z}f(\overline{X}_{\Gamma_{n}}, z)   + o(\gamma_{n+1})  \Vert f \Vert_{\infty} \\
&+  \sum_{z=1}^{M_0}\big( \vert q_{\zeta_{\Gamma_{n}}  ,z}  \vert+  o(\gamma_{n+1} ) \big) \big(\Lambda_{f,1} (\overline{X}_{\Gamma_{n}}, \zeta_{\Gamma_{n}} ,\gamma_{n+1})    \vert b(\overline{X}_{\Gamma_{n}},\zeta_{\Gamma_{n}})   \vert \\
& \qquad \qquad  \qquad \qquad  \qquad \qquad +\Lambda_{f,2} (\overline{X}_{\Gamma_{n}}, \zeta_{\Gamma_{n}} ,\gamma_{n+1}) \Tr[\sigma \sigma^{\ast}(\overline{X}_{\Gamma_{n}},\zeta_{\Gamma_{n}})   ]\big).
\end{align*}
We study each term in the $r.h.s.$ of the inequality above. First, we have $\Lambda_{f,1}(x,z, \gamma)= \vert b (x,z) \vert  \tilde{\mathbb{E}}[\tilde{\Lambda}_{f,1}(x,z,\gamma)]$ where $\tilde{\Lambda}_{f,1}(x,z,\gamma)=\tilde{\mathcal{R}}_{f,1}(x,z,\gamma, \Theta)$ with $\Theta \sim \mathcal{U}_{[0,1]}$ under $\tilde{\mathbb{P}}$, and
%
%
%
%
\begin{align*}
\begin{array}{crcl}
\tilde{\mathcal{R}}_{f,1} & :  \mathbb{R}^d \times \{1,\ldots,M_0 \} \times \mathbb{R}_+   \times [0,1] & \to & \mathbb{R}_+ \\
 &( x,z, \gamma , \theta ) & \mapsto &   \gamma\sum\limits_{w=1}^{M_0} \vert \nabla_xf(x+  \theta b(x,z) \gamma ,w ) \vert  .
\end{array}
\end{align*}
We are going to prove that $\mathcal{E}(\widetilde{A},A,\DomA_0)$ \ref{hyp:erreur_tems_cours_fonction_test_reg_Lambda_representation_1} (see (\ref{hyp:erreur_temps_cours_fonction_test_reg_Lambda_representation_2_1})) holds.\\

 Since $b$ has sublinear growth w.r.t. its first variable, there exists $C_{b} \geqslant 0$ such that $\vert b(x,z)\vert   \leqslant C_{b}(1+ \vert x \vert ) $ for every $x \in \mathbb{R}^d$ and $z \in \{1,\ldots,M_0 \}$. Therefore, since $f$ has a compact support, it follows that there exists $\gamma_0>0$ and $R>0$ such that we have $\sup_{\vert x \vert >R,z \in \{1,\ldots,M_0 \} } \sup_{\gamma \leqslant \gamma_0} \tilde{\mathcal{R}}_{f,1}(x,z,\gamma, \theta) =0$ for every $\theta \in [0,1]$ which implies $\mathcal{E}(\widetilde{A},A,\DomA_0)$ \ref{hyp:erreur_tems_cours_fonction_test_reg_Lambda_representation_1} (ii).\\
  Since $\nabla_x f$ is bounded, it is immediate that $\mathcal{E}(\widetilde{A},A,\DomA_0)$ \ref{hyp:erreur_tems_cours_fonction_test_reg_Lambda_representation_1} (i) holds. \\
 Finally, $b$ is locally bounded and defining and $g_{1}(x,z)= \mathds{1}_{x \leqslant R} \vert b(x,z) \vert$, the couple $(\tilde{\Lambda}_{f,1},g_1)$ satisfies $\mathcal{E}(\widetilde{A},A,\DomA_0)$ \ref{hyp:erreur_tems_cours_fonction_test_reg_Lambda_representation_1}. \\ 
%
%

Now, we have $\Lambda_{f,2}(x,z, \gamma)= g_2 (x,z)  \tilde{\mathbb{E}}[\tilde{\Lambda}_{f,2}(x,z,\gamma)]$ where $\tilde{\Lambda}_{f,2} ( x,z,\gamma)= \tilde{\mathcal{R}}_{f,2}(x,z,\gamma,U, \Theta)$ with $U \sim \mathbb{P}_U$, $\Theta \sim \mathcal{U}_{[0,1]}$ under $\tilde{\mathbb{P}}$ and $g_{2}(x,z)=  \Tr[ \sigma \sigma^{\ast}(x,z) ]$ and
 \begin{align*}
\begin{array}{crcl}
\tilde{\mathcal{R}}_{f,2} & :  \mathbb{R}^d \times \{1,\ldots,M_0 \} \times \mathbb{R}_+ \times \mathbb{R}^{d}  \times [0,1] & \to & \mathbb{R}_+ \\
 &( x,z, \gamma , u, \theta ) & \mapsto &  \tilde{\mathcal{R}}_{f,2} ( x,z, \gamma , u, \theta )  ,
\end{array}
\end{align*}
with
\begin{align*}
\tilde{\mathcal{R}}_{f,2} ( x,z, \gamma , u, \theta )  =\vert   \sqrt{ \gamma} u \vert^2 \sum\limits_{w=1}^{M_0} \vert  D^2_xf(x+  b(x,z) \gamma + \theta \sigma(x,z) \sqrt{ \gamma} u,w)\vert   .
\end{align*}
We are going to prove that $\mathcal{E}(\widetilde{A},A,\DomA_0)$ \ref{hyp:erreur_tems_cours_fonction_test_reg_Lambda_representation_1} (see (\ref{hyp:erreur_temps_cours_fonction_test_reg_Lambda_representation_2_1})) holds for the couple $(\tilde{\Lambda}_{f,2} ,g_2)$. We fix $u \in \mathbb{R}^N$ and $\theta \in [0,1]$.\\

Since the functions $b$ and $ \sigma$ have sublinear growth, there exists $C_{b, \sigma} \geqslant 0$ such that $\vert b(x,z)\vert +  \vert \sigma(x,z) \vert \leqslant C_{b, \sigma}(1+ \vert x \vert ) $ for every $x \in \mathbb{R}^d$ and $z \in \{1,\ldots,M_0 \}$. Therefore, since $f$ has compact support, there exists $\gamma_0(u,\theta)>0$ and $R>0$ such that 
\begin{align*}
\sup_{\vert x \vert >R, z \in \{1,\ldots,M_0 \} } \sup_{\gamma \leqslant \gamma_0(u,\theta)} \vert\tilde{\mathcal{R}}_{f,2}(x,z,\gamma,u,\theta) \vert=0.
\end{align*}
 It follows that $\mathcal{E}(\widetilde{A},A,\DomA_0)$ \ref{hyp:erreur_tems_cours_fonction_test_reg_Lambda_representation_1} (ii) holds.\\
  Moreover since $D^2_x f$ is bounded,  it is immediate that $\mathcal{E}(\widetilde{A},A,\DomA_0)$ \ref{hyp:erreur_tems_cours_fonction_test_reg_Lambda_representation_1} (i) is also satisfied. \\
  Finally, we recall that $\sup_{n \in \mathbb{N}^{\ast}} \nu_n^{\eta}( \Tr[\sigma\sigma^{\ast}] )< + \infty, \; a.s. $ and $U$ is bounded in $\mbox{L}^2$ and then $\mathcal{E}(\widetilde{A},A,\DomA_0)$ \ref{hyp:erreur_tems_cours_fonction_test_reg_Lambda_representation_1} holds for $(\tilde{\Lambda}_{f,2} ,g_2)$.\\

Moreover, it is immediate to show that $\mathcal{E}(\widetilde{A},A,\DomA_0)$ \ref{hyp:erreur_temps_cours_fonction_test_reg_Lambda_representation_2} (see \ref{hyp:erreur_temps_cours_fonction_test_reg_Lambda_representation_2_2})) holds for every couple of functions with form $(\underset{n \to + \infty}{o}(\gamma_{n+1})  \Vert f \Vert_{\infty} ,1)$ which concludes the study of the first term.\\

It remains to study $\mathbb{E}[ f(\overline{X}_{\Gamma_{n+1}}, \zeta_{\Gamma_{n}})- f(\overline{X}_{\Gamma_{n}}, \zeta_{\Gamma_{n}}) \vert \overline{X}_{\Gamma_{n}}, \zeta_{\Gamma_{n}}  ]$. Using once again Taylor expansions at order one and two, we derive
\begin{align*}
\gamma_{n+1}^{-1} \big( \mathbb{E}[  f(\overline{X}_{\Gamma_{n+1}}, \zeta_{\Gamma_{n}}) - & f(\overline{X}_{\Gamma_{n}}, \zeta_{\Gamma_{n}})  \vert \overline{X}_{\Gamma_{n}}=x, \zeta_{\Gamma_{n}}=z] \\
-& \langle \nabla_xf(x ,z ),b(x,z) \rangle -  \frac{1}{2}\sum_{i,j=1}^d (\sigma \sigma^{\ast})_{i,j}(x, z) \frac{\partial^2f}{\partial x_i \partial x_j}(x, z)   \big)  \\
\leqslant&  \int_0^1 \vert  \nabla_xf(x+  \theta b(x,z) \gamma_{n+1} ,z) - \nabla_xf(x) \vert  \vert b(x,z)   \vert d \theta  \\
& +   \int_0^1 \vert D^2_xf(x+   b(x,z) \gamma_{n+1}+ \theta \sigma(x,z) \sqrt{ \gamma_{n+1}} u ,z) \\
& \qquad  \qquad \qquad \qquad-D^2_xf(x)\vert  \vert  \sigma(x,z)  v \vert^2 d \theta p_{U}(du) .
\end{align*}
 Using a similar reasoning as before, one can show that $\mathcal{E}(\widetilde{A},A,\DomA_0)$ \ref{hyp:erreur_tems_cours_fonction_test_reg_Lambda_representation_1} holds for $(\tilde{\Lambda}_{f,3} ,g_1)$ and $(\tilde{\Lambda}_{f,4} ,g_2)$ where $\tilde{\Lambda}_{f,3}(x,z,\gamma)=\tilde{\mathcal{R}}_{f,3}(x,z,\gamma, \Theta)$ and $\tilde{\Lambda}_{f,4}(x,z,\gamma)=\tilde{\mathcal{R}}_{f,4}(x,z,\gamma, U,\Theta)$  with $U \sim p_U$ and $\Theta \sim \mathcal{U}_{[0,1]}$ under $\tilde{\mathbb{P}}$,
\begin{align*}
\begin{array}{crcl}
\tilde{\mathcal{R}}_{f,3} & :  \mathbb{R}^d \times \{1,\ldots,M_0 \} \times \mathbb{R}_+   \times [0,1] & \to & \mathbb{R}_+ \\
 &( x,z, \gamma , \theta ) & \mapsto &   \vert \nabla_xf(x+  \theta b(x,z) \gamma ,z ) -\nabla_xf(x ,z ) \vert,
\end{array}
\end{align*}
and
 \begin{align*}
\begin{array}{crcl}
\tilde{\mathcal{R}}_{f,4} & :  \mathbb{R}^d \times \{1,\ldots,M_0 \} \times \mathbb{R}_+ \times \mathbb{R}^{d }  \times [0,1] & \to & \mathbb{R}_+ \\
 &( x,z, \gamma , u, \theta ) & \mapsto &  \tilde{\mathcal{R}}_{f,4}( x,z, \gamma , u, \theta )  ,
\end{array}
\end{align*}
with
\begin{align*}
\tilde{\mathcal{R}}_{f,4}( x,z, \gamma , u, \theta ) = \vert D^2_xf(x+  b(x,z) \gamma + \theta \sigma(x,z) \sqrt{ \gamma} u ,z) - D^2_xf(x) \vert\vert  u \vert^2 .
\end{align*}
We gather all the terms together noticing that $\tilde{\Lambda}_{f,q}= \tilde{\Lambda}_{-f,q}$, $q \in \{1,\ldots , 4\}$, and the proof is completed.
\end{proof}
\subsubsection{Proof of Growth control and Step Weight assumptions}
\paragraph{Test functions with polynomial growth. }
\begin{lemme}
\label{lemme:incr_lyapunov_X_MS}
 Let $p \geqslant 1,a \in (0,1]$, $\rho \in [1,2]$, $s \geqslant 1$ and let $\psi_p(y)=y^p$ and $\phi(y)=y^a$ . We suppose that the sequence $(U_n)_{n \in \mathbb{N}^{\ast}}$ satisfies $M_{(\rho/2) \vee  (p\rho /s) }(U)$ (see (\ref{hyp:moment_ordre_p_va_schema_MS})). Then, for every $n \in \mathbb{N}$, we have
\begin{align}
\label{lemme:incr_lyapunov_X_MS_f_DomA}
 \forall f \in \DomA_0 ,\quad  \mathbb{E}[  \vert f(\overline{X}_{\Gamma_{n+1}}, \zeta_{\Gamma_{n+1}})- & f( \overline{X}^1_{\Gamma_{n}}, \zeta_{\Gamma_{n}} ) \vert^{\rho}\vert \overline{X}_{\Gamma_{n}}, \zeta_{\Gamma_{n}} ] \nonumber \\
 &  \leqslant   C_f \gamma_{n+1}^{\rho/2}  1 \vee\Tr[ \sigma \sigma^{\ast} (\overline{X}_{\Gamma_n} , \zeta_{\Gamma_{n}}) ]^{\rho/2}  .
\end{align}
with notations (\ref{def:incr_MS}). In other words, we have $\mathcal{GC}_{Q}(\DomA_0,1 \vee \Tr[ \sigma \sigma^{\ast} ]^{\rho/2} ,\rho,\epsilon_{\mathcal{I}}) $ (see (\ref{hyp:incr_X_Lyapunov})) with $\epsilon_{\mathcal{I}}(\gamma)=\gamma^{\rho/2}$ for every $\gamma \in \mathbb{R}_+$. \\
%
%
%
Moreover, if (\ref{hyp:Lyapunov_control_MS}), (\ref{hyp:Lyapunov_control_MS_unif}) and $\mathfrak{B}(\phi)$ (see (\ref{hyp:controle_coefficients_MS})) hold and $p\rho/s \leqslant p+a-1$, then, for every $n \in \mathbb{N}$, we have
\begin{align}
\label{lemme:incr_lyapunov_X_MS_f_tens}
 \mathbb{E}[\vert  V^{p/s}(\overline{X}_{\Gamma_{n+1}}, \zeta_{\Gamma_{n+1}})- V^{p/s}(\overline{X}_{\Gamma_{n}}, \zeta_{\Gamma_{n}}) \vert^{\rho}\vert &\overline{X}_{\Gamma_{n}}, \zeta_{\Gamma_{n}} ]  \nonumber \\
& \leqslant  C \gamma_{n+1}^{\rho/2} V^{p+a-1}(\overline{X}_{\Gamma_{n}}, \zeta_{\Gamma_{n}}),
\end{align}
In other words, we have $\mathcal{GC}_{Q}(V^{p/s},V^{p+a-1},\rho,\epsilon_{\mathcal{I}}) $ (see (\ref{hyp:incr_X_Lyapunov})) with $\epsilon_{\mathcal{I}}(\gamma)=\gamma^{\rho/2}$ for every $\gamma \in \mathbb{R}_+$. \\
\end{lemme}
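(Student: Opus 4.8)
The plan is to establish the two displayed inequalities pointwise in $n$ and then to read off the two $\mathcal{GC}_{Q}$ statements via Remark \ref{rmrk:Accroiss_mes}: the quantities $V^{p/s}(\overline X_{\Gamma_{n}},\zeta_{\Gamma_{n}})$ and $f(\overline X^{1}_{\Gamma_{n+1}},\zeta_{\Gamma_{n}})$, with $\overline X^{1}_{\Gamma_{n+1}}=\overline X_{\Gamma_{n}}+\Delta\overline X^{1}_{n+1}$, being $\sigma(\overline X_{\Gamma_{n}},\zeta_{\Gamma_{n}})$-measurable, that remark lets us replace the conditional mean $\mathscr{Q}_{\gamma_{n+1}}(\cdot)$ in (\ref{hyp:incr_X_Lyapunov}) by these surrogates up to a factor $2^{\rho}$. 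The recurring ingredients will be: inequality (\ref{eq:puisance_somme_n_terme}); the Lipschitz property of $\sqrt{V}(\cdot,z)$, uniform in $z$ with constant $\sqrt{C_{V}}/2$ since $|\nabla_{x}V|^{2}\leq C_{V}V$ by (\ref{hyp:Lyapunov_control_MS}), together with $\|D^{2}_{x}V\|_{\infty}<\infty$; the one-step estimate $\mathbb{E}[|\Delta\overline X_{n+1}|^{q}\mid\overline X_{\Gamma_{n}},\zeta_{\Gamma_{n}}]\leq C\gamma_{n+1}^{q/2}\big(\phi\circ V(\overline X_{\Gamma_{n}},\zeta_{\Gamma_{n}})\big)^{q/2}$, which follows from $\mathfrak{B}(\phi)$ (see (\ref{hyp:controle_coefficients_MS})), $\gamma_{n+1}\leq\overline\gamma$ and $M_{(\rho/2)\vee(p\rho/s)}(U)$ (see (\ref{hyp:moment_ordre_p_va_schema_MS})) whenever $\mathbb{E}[|U_{1}|^{q}]<\infty$; and $\mathbb{P}(\zeta_{\Gamma_{n+1}}\neq\zeta_{\Gamma_{n}}\mid\zeta_{\Gamma_{n}})\leq\overline q\,\gamma_{n+1}$ with $\overline q=\max_{z}\sum_{w\neq z}q_{z,w}$, used together with the conditional independence, given $(\overline X_{\Gamma_{n}},\zeta_{\Gamma_{n}})$, of $W_{\Gamma_{n+1}}-W_{\Gamma_{n}}$ and of the path of $\zeta$ on $[\Gamma_{n},\Gamma_{n+1}]$.

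For (\ref{lemme:incr_lyapunov_X_MS_f_DomA}) I would write $f(\overline X_{\Gamma_{n+1}},\zeta_{\Gamma_{n+1}})-f(\overline X^{1}_{\Gamma_{n+1}},\zeta_{\Gamma_{n}})$ as the sum of $f(\overline X_{\Gamma_{n+1}},\zeta_{\Gamma_{n+1}})-f(\overline X_{\Gamma_{n+1}},\zeta_{\Gamma_{n}})$ and $f(\overline X_{\Gamma_{n+1}},\zeta_{\Gamma_{n}})-f(\overline X^{1}_{\Gamma_{n+1}},\zeta_{\Gamma_{n}})$. The first summand is dominated by $2\|f\|_{\infty}\mathds{1}_{\zeta_{\Gamma_{n+1}}\neq\zeta_{\Gamma_{n}}}$, so after raising to the power $\rho$ and conditioning it contributes at most $(2\|f\|_{\infty})^{\rho}\overline q\,\gamma_{n+1}\leq C_{f}\overline\gamma^{\,1-\rho/2}\gamma_{n+1}^{\rho/2}$, using $1-\rho/2\geq0$. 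The second summand equals $f(\overline X^{1}_{\Gamma_{n+1}}+\Delta\overline X^{2}_{n+1},\zeta_{\Gamma_{n}})-f(\overline X^{1}_{\Gamma_{n+1}},\zeta_{\Gamma_{n}})$ with $\Delta\overline X^{2}_{n+1}=\sigma(\overline X_{\Gamma_{n}},\zeta_{\Gamma_{n}})(W_{\Gamma_{n+1}}-W_{\Gamma_{n}})$; since $f(\cdot,z)\in\mathcal{C}^{2}_{K}(\mathbb{R}^{d})$ is Lipschitz uniformly in $z$, it is bounded by $\|\nabla_{x}f\|_{\infty}\sqrt{\Tr[\sigma\sigma^{*}(\overline X_{\Gamma_{n}},\zeta_{\Gamma_{n}})]}\,|W_{\Gamma_{n+1}}-W_{\Gamma_{n}}|$, whence its $\rho$-th conditional moment is at most $C_{f}\gamma_{n+1}^{\rho/2}\Tr[\sigma\sigma^{*}(\overline X_{\Gamma_{n}},\zeta_{\Gamma_{n}})]^{\rho/2}\,\mathbb{E}[|U_{1}|^{\rho}]$, finite by the moment hypothesis. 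Adding the two pieces, after a preliminary $2^{\rho-1}$-split, gives (\ref{lemme:incr_lyapunov_X_MS_f_DomA}) with $g=1\vee\Tr[\sigma\sigma^{*}]^{\rho/2}$, and Remark \ref{rmrk:Accroiss_mes} then yields $\mathcal{GC}_{Q}(\DomA_{0},1\vee\Tr[\sigma\sigma^{*}]^{\rho/2},\rho,\epsilon_{\mathcal{I}})$ with $\epsilon_{\mathcal{I}}(\gamma)=\gamma^{\rho/2}$.

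For (\ref{lemme:incr_lyapunov_X_MS_f_tens}) I would again freeze the regime and split the increment of $V^{p/s}$ into a regime-jump part $V^{p/s}(\overline X_{\Gamma_{n+1}},\zeta_{\Gamma_{n+1}})-V^{p/s}(\overline X_{\Gamma_{n+1}},\zeta_{\Gamma_{n}})$ and an $x$-part $V^{p/s}(\overline X_{\Gamma_{n+1}},\zeta_{\Gamma_{n}})-V^{p/s}(\overline X_{\Gamma_{n}},\zeta_{\Gamma_{n}})$. By (\ref{hyp:Lyapunov_control_MS_unif}) the regime-jump part is bounded by $(1+c_{V}^{p/s})V^{p/s}(\overline X_{\Gamma_{n+1}},\zeta_{\Gamma_{n}})\mathds{1}_{\zeta_{\Gamma_{n+1}}\neq\zeta_{\Gamma_{n}}}$; raising to $\rho$, using the conditional independence together with $\mathbb{P}(\zeta_{\Gamma_{n+1}}\neq\zeta_{\Gamma_{n}}\mid\zeta_{\Gamma_{n}})\leq\overline q\,\gamma_{n+1}$, and bounding $\mathbb{E}[V^{p\rho/s}(\overline X_{\Gamma_{n+1}},\zeta_{\Gamma_{n}})\mid\overline X_{\Gamma_{n}},\zeta_{\Gamma_{n}}]\leq CV^{p\rho/s}(\overline X_{\Gamma_{n}},\zeta_{\Gamma_{n}})$ through $\sqrt V$-Lipschitzness, (\ref{eq:puisance_somme_n_terme}), $\mathfrak{B}(\phi)$ with $\phi(y)=y^{a}$ and the one-step estimate, this part is $\leq C\gamma_{n+1}V^{p\rho/s}(\overline X_{\Gamma_{n}},\zeta_{\Gamma_{n}})\leq C\gamma_{n+1}^{\rho/2}V^{p+a-1}(\overline X_{\Gamma_{n}},\zeta_{\Gamma_{n}})$, using $1-\rho/2\geq0$, $\gamma_{n+1}\leq\overline\gamma$ and $p\rho/s\leq p+a-1$ (so $V^{p\rho/s}\leq CV^{p+a-1}$ on $[v_{*},\infty)$). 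For the $x$-part, with $z_{0}=\zeta_{\Gamma_{n}}$, the delicate point is that $V^{p/s}(\cdot,z_{0})$ is not globally Lipschitz once $p/s>1/2$; I would therefore integrate its gradient along the segment, use $|\nabla_{x}V^{p/s}|=\tfrac{p}{s}V^{p/s-1}|\nabla_{x}V|\leq\tfrac{p\sqrt{C_{V}}}{s}V^{p/s-1/2}$ and $V(\overline X_{\Gamma_{n}}+t\Delta\overline X_{n+1},z_{0})\leq\big(\sqrt V(\overline X_{\Gamma_{n}},z_{0})+\tfrac{\sqrt{C_{V}}}{2}|\Delta\overline X_{n+1}|\big)^{2}$ together with (\ref{eq:puisance_somme_n_terme}) to obtain $|V^{p/s}(\overline X_{\Gamma_{n+1}},z_{0})-V^{p/s}(\overline X_{\Gamma_{n}},z_{0})|\leq C\big(V^{(p/s-1/2)_{+}}(\overline X_{\Gamma_{n}},z_{0})|\Delta\overline X_{n+1}|+|\Delta\overline X_{n+1}|^{1+2(p/s-1/2)_{+}}\big)$ (when $p/s\leq1/2$ the gradient is bounded and the first term alone suffices). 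Raising this to $\rho$, taking the conditional expectation, and invoking $\mathfrak{B}(\phi)$ with $\phi(y)=y^{a}$ and $M_{(\rho/2)\vee(p\rho/s)}(U)$ — the highest moment of $U_{1}$ that intervenes, of order $2p\rho/s$, being exactly the one guaranteed — produces a sum of terms each of which, using $a\leq1$, $p\rho/s\leq p+a-1$ and $\gamma_{n+1}\leq\overline\gamma$, carries an exponent of $V$ not exceeding $p+a-1$ and an exponent of $\gamma_{n+1}$ not below $\rho/2$, hence is $\leq C\gamma_{n+1}^{\rho/2}V^{p+a-1}(\overline X_{\Gamma_{n}},\zeta_{\Gamma_{n}})$. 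Combining the two parts yields (\ref{lemme:incr_lyapunov_X_MS_f_tens}), and Remark \ref{rmrk:Accroiss_mes} again gives $\mathcal{GC}_{Q}(V^{p/s},V^{p+a-1},\rho,\epsilon_{\mathcal{I}})$.

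The main obstacle is precisely the $x$-part of the second inequality: since $\nabla_{x}V^{p/s}$ is unbounded when $p/s>1/2$, that increment is not Lipschitz in $\Delta\overline X_{n+1}$, and the cheap surrogates $|v^{c}-w^{c}|\leq Cv_{*}^{c-1}|v-w|$ or $|v^{c}-w^{c}|\leq|v-w|^{c}$ would respectively generate a $|\Delta\overline X_{n+1}|^{2\rho}$ term requiring moments of $U_{1}$ not guaranteed by $M_{(\rho/2)\vee(p\rho/s)}(U)$, or destroy a power of $\gamma_{n+1}$ below $\gamma_{n+1}^{\rho/2}$. Integrating the genuine gradient along the segment and majorising $V$ there by $\big(\sqrt V(\overline X_{\Gamma_{n}},z_{0})+\tfrac{\sqrt{C_{V}}}{2}|\Delta\overline X_{n+1}|\big)^{2}$ is what simultaneously keeps every exponent of $V$ at or below $p+a-1$ and every exponent of $\gamma_{n+1}$ at or above $\rho/2$, using exactly $a\in(0,1]$ and $p\rho/s\leq p+a-1$; by contrast the regime-switching contribution is routine, carrying an extra factor $\gamma_{n+1}$ from $\mathbb{P}(\zeta_{\Gamma_{n+1}}\neq\zeta_{\Gamma_{n}}\mid\zeta_{\Gamma_{n}})=O(\gamma_{n+1})$ that absorbs the loss $\gamma_{n+1}\mapsto\gamma_{n+1}^{\rho/2}$.
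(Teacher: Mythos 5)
Your proof is correct and follows essentially the same route as the paper's: the same split into a frozen-regime increment and a regime-switch increment (the latter controlled by the $O(\gamma_{n+1})$ switching probability together with (\ref{hyp:Lyapunov_control_MS_unif})), with the same moment bookkeeping based on $\mathfrak{B}(\phi)$, $M_{(\rho/2)\vee(p\rho/s)}(U)$, $\gamma_{n+1}\leq\overline\gamma$ and $p\rho/s\leq p+a-1$. Your only deviation, integrating $\nabla_x V^{p/s}$ along the segment instead of applying (\ref{eq:puisssance_sup_1}) to $\sqrt V$, produces the same key bound $V^{p/s-1/2}(\overline X_{\Gamma_n},z)\,\vert\Delta\overline X_{n+1}\vert+\vert\Delta\overline X_{n+1}\vert^{2p/s}$ and has the small merit of covering the case $2p/s<1$ explicitly, which the paper's direct use of (\ref{eq:puisssance_sup_1}) (stated for exponents $\geqslant 1$) passes over.
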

\begin{proof}
We begin by noticing that, with notations (\ref{def:incr_MS}),
\begin{align*}
 \vert \overline{X}_{\Gamma_{n+1}} -\overline{X}^1_{\Gamma_{n+1}} \vert\leqslant C  \gamma_{n+1}^{1/2}\Tr[ \sigma \sigma^{\ast} (\overline{X}_{\Gamma_n} , \zeta_{\Gamma_{n}}) ]^{1/2} \vert U_{n+1} \vert 
\end{align*} 
Let $f \in \DomA_0$. We employ this estimation and since for $f \in \DomA_0$ then $f(.,z)$ is uniformly Lipschitz in $z \in \{1,\ldots,M_0\}$, it follows that  
\begin{align*}
\mathbb{E}\big[  \vert f(\overline{X}_{\Gamma_{n+1}}, \zeta_{\Gamma_{n}})- f( \overline{X}^1_{\Gamma_{n}}, \zeta_{\Gamma_{n}} ) \vert^{\rho}\vert \overline{X}_{\Gamma_{n}}, \zeta_{\Gamma_{n}} \big] \leqslant  C  \gamma_{n+1}^{\rho/2}\vert \sigma \sigma^{\ast} (\overline{X}_{\Gamma_n} , \zeta_{\Gamma_{n}}) \vert^{\rho/2} .
\end{align*}
Moreover, 
\begin{align*}
\mathbb{E}[  \vert  f( &\overline{X}_{\Gamma_{n+1}}, \zeta_{\Gamma_{n+1}})-  f(\overline{X}_{\Gamma_{n+1}}, \zeta_{\Gamma_{n}}) \vert^{\rho}  \vert  \overline{X}_{\Gamma_{n}}, \zeta_{\Gamma_{n}}  ] \\
 =&   \gamma_{n+1} \sum_{z=1}^{M_0}( q_{\zeta_{\Gamma_{n}}  ,z}+  \underset{n \to + \infty}{o}(\gamma_{n+1} )) \mathbb{E} [\vert f(\overline{X}_{\Gamma_{n+1}},z) -f(\overline{X}_{\Gamma_{n+1}}, \zeta_{\Gamma_{n}}) \vert^{\rho} \vert \overline{X}_{\Gamma_{n}}, \zeta_{\Gamma_{n}} ]  \\
 \leqslant& C \gamma_{n+1} \Vert f \Vert_{\infty}^{\rho}.
\end{align*}
Gathering both terms concludes the study for $f \in \DomA_0$.\\
 We focus now on the case $f=V^{p/s}$. We notice that $\mathfrak{B}(\phi)$ (see (\ref{hyp:controle_coefficients_MS})) implies that for any $n \in \mathbb{N}$,
\begin{align*}
 \vert \overline{X}_{\Gamma_{n+1}} -\overline{X}_{\Gamma_n} \vert\leqslant C  \gamma_{n+1}^{1/2}\sqrt{\phi \circ V (\overline{X}_{\Gamma_n},\zeta_{\Gamma_{n}} )} (1+\vert U_{n+1} \vert).
\end{align*}  
We rewrite the term that we study as follows 
\begin{align*}
V^{p/s}( \overline{X}_{\Gamma_{n+1}}, \zeta_{\Gamma_{n+1}} )-V^{p/s}(\overline{X}_{\Gamma_n},\zeta_{\Gamma_{n}} ) = & V^{p/s}( \overline{X}_{\Gamma_{n+1}}, \zeta_{\Gamma_{n}} )-V^{p/s}(\overline{X}_{\Gamma_n},\zeta_{\Gamma_{n}} ) \\
& + V^{p/s}( \overline{X}_{\Gamma_{n+1}}, \zeta_{\Gamma_{n+1}} )-V^{p/s}(\overline{X}_{\Gamma_{n+1}},\zeta_{\Gamma_{n}} ).
\end{align*}
We study the first term of the $r.h.s.$ of the equality above. Using  the following inequality
 \begin{align}
 \label{eq:puisssance_sup_1}
\forall u,v \in \mathbb{R}_+,\forall \alpha \geqslant 1, \qquad \vert u^{\alpha} -v^{\alpha} \vert \leqslant &  \alpha 2^{\alpha-1} ( v^{\alpha-1} \vert u -v \vert + \vert u -v \vert ^{\alpha} ),
 \end{align}
with $\alpha=2p/s$, it follows from (\ref{hyp:Lyapunov_control_MS}) that $\sqrt{V(.,z)}$ is Lipschitz uniformly in $z \in \{1,\ldots,M_0 \}$ and
\begin{align*}
\big\vert V^{p/s}( \overline{X}_{\Gamma_{n+1}},z )-& V^{p/s}(\overline{X}_{\Gamma_n},z ) \big\vert \\
\leqslant &  2^{2p/s}p/s \big( V^{p/s-1/2}(\overline{X}_{\Gamma_n},z ) \big\vert \sqrt{V}( \overline{X}_{\Gamma_{n+1}} ,z)  - \sqrt{V} (\overline{X}_{\Gamma_n} ,z) \big\vert  \\
& + \vert  \sqrt{V}( \overline{X}_{\Gamma_{n+1}} ,z)  - \sqrt{V}(\overline{X}_{\Gamma_n} , z) \vert^{2p/s}  \big) \\
\leqslant &  2^{2p/s}p/s \big(  [ \sqrt{V}]_1 V^{p/s-1/2}(\overline{X}_{\Gamma_n},z ) \vert  \overline{X}_{\Gamma_{n+1}}- \overline{X}_{\Gamma_n} \vert \\
 & + [ \sqrt{V}]_1^{2p/s}  \vert   \overline{X}_{\Gamma_{n+1}} -\overline{X}_{\Gamma_n}\vert^{2p/s}   \big).
\end{align*}
We use the assumption $p\rho/s \leqslant p+a-1$, $a\in (0,1]$, $p\geqslant1$ and it follows from $\mathfrak{B}(\phi)$ (see (\ref{hyp:controle_coefficients_MS})) and (\ref{hyp:Lyapunov_control_MS_unif}) when $z  \neq \zeta_{\Gamma_n}$, that
\begin{align*}
\mathbb{E}[ \vert V^{p/s}( \overline{X}_{\Gamma_{n+1}},z )-V^{p/s}(\overline{X}_{\Gamma_n},z)  \vert^{\rho} \vert \overline{X}_{\Gamma_n}, \zeta_{\Gamma_{n}}]
\leqslant C \gamma_{n+1}^{\rho/2} V^{p+a-1}(\overline{X}_{\Gamma_n},z) .
\end{align*}
In order to treat the first term, we put $z =\zeta_{\Gamma_n}$ in this estimation. It remains to study the second term. We notice that since $p\rho/s \leqslant p+a-1$, it is immediate from the previous inequality that for every $z\in \{1,\ldots,M_0\}$, we have
\begin{align*}
\mathbb{E}\big[ V^{p \rho/s}( \overline{X}_{\Gamma_{n+1}},z ) \vert \overline{X}_{\Gamma_n}, z \big] \leqslant C V^{p+a-1}(\overline{X}_{\Gamma_n},z) .
\end{align*}.
 We focus on the term to estimate and using this inequality, we obtain
\begin{align*}
\mathbb{E}[  \vert V^{p/s}(\overline{X}_{\Gamma_{n+1}}, & \zeta_{\Gamma_{n+1}})- V^{p/s}(\overline{X}_{\Gamma_{n+1}}, \zeta_{\Gamma_{n}}) \vert^{\rho}  \vert  \overline{X}_{\Gamma_{n}}, \zeta_{\Gamma_{n}}  ] \\
 =&   \gamma_{n+1} \sum_{z=1}^{M_0} \big( q_{\zeta_{\Gamma_{n}}  ,z}+  o(\gamma_{n+1} ) \big) \\
 & \qquad \qquad \qquad \times \mathbb{E} [\vert V^{p/s}(\overline{X}_{\Gamma_{n+1}},z) - V^{p/s}(\overline{X}_{\Gamma_{n+1}}, \zeta_{\Gamma_{n}}) \vert^{\rho} \vert \overline{X}_{\Gamma_{n}}, \zeta_{\Gamma_{n}} ]  \\
 \leqslant & C \gamma_{n+1} \sum_{z=1}^{M_0} \big( \vert  q_{\zeta_{\Gamma_{n}}  ,z} \vert+ \gamma_{n+1} \big) \big( V^{p+a-1}(\overline{X}_{\Gamma_n},z) +V^{p+a-1}(\overline{X}_{\Gamma_n}, \zeta_{\Gamma_{n}}) \big)\\
 \leqslant& C \gamma_{n+1} V^{p+a-1}(\overline{X}_{\Gamma_{n}}, {\zeta_{\Gamma_{n}}  }),
\end{align*}
where the last inequality follows from (\ref{hyp:Lyapunov_control_MS_unif}). We rearrange the terms and the proof of (\ref{lemme:incr_lyapunov_X_MS_f_tens}) is completed.
\end{proof}
\paragraph{Test functions with exponential growth. }
\begin{lemme}
\label{lemme:incr_lyapunov_X_MS_expo}
 Let $p\in [0,1], \lambda \geqslant 0$, $s \geqslant 1$, $\rho \in [1,2]$ and let $\phi:[v_{\ast},\infty )\to \mathbb{R}_+$ be a continuous function such that $C_{\phi}:= \sup_{y \in [v_{\ast},\infty )}\phi(y)/y< +\infty$ and let $\psi(y)=\exp(\lambda y^p)$. We assume that $ \rho < s$, (\ref{hyp:Lyapunov_control_MS}), (\ref{hyp:Lyapunov_control_MS_unif_expo}) and $\mathfrak{B}(\phi)$ (see (\ref{hyp:controle_coefficients_MS})) hold, and that
\begin{align}
\label{hyp:incr_lyapunov_X_MS_expo}
 \forall \tilde{\lambda} \leqslant \lambda, \exists C\geqslant 0, & \forall n \in \mathbb{N}, \nonumber\\
 & \mathbb{E}[  \exp( \tilde{\lambda} V^p(\overline{X}_{\Gamma_{n+1}}))  \vert \overline{X}_{\Gamma_n}, \zeta_{\Gamma_n}] \leqslant  C \exp(\tilde{\lambda} V^p(\overline{X}_{\Gamma_{n}}))   .
\end{align}
Then, for every $n \in \mathbb{N}$, we have
\begin{align}
\label{eq:incr_lyapunov_X_MS_f_tens_expo}
\mathbb{E}[\vert \exp(  \lambda/s V^p(\overline{X}_{\Gamma_{n+1}}))- & \exp(\lambda/s V^p(\overline{X}_{\Gamma_{n}}))  \vert^{\rho} \vert \overline{X}_{\Gamma_{n}}, \zeta_{\Gamma_n}] \nonumber  \\
\leqslant &  C \gamma_{n+1}^{\rho(p \wedge 1/2)} \frac{\phi \circ V (\overline{X}_{\Gamma_{n}}) }{V(\overline{X}_{\Gamma_{n}}) }\exp(\lambda V^p(\overline{X}_{\Gamma_{n}})) .
\end{align}
In other words, we have $\mathcal{GC}_{Q}(\exp(\lambda/s  V^{p}),V^{-1}  .\phi \circ V .\exp(\lambda  V^{p}),\rho,\epsilon_{\mathcal{I}}) $ (see (\ref{hyp:incr_X_Lyapunov})) and $\epsilon_{\mathcal{I}}(\gamma)=\gamma^{\rho(p \wedge 1/2)}$ for every $\gamma \in \mathbb{R}_+$. 
\end{lemme}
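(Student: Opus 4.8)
The plan is to argue conditionally on $(\overline{X}_{\Gamma_n},\zeta_{\Gamma_n})=(x,z)$, for which $\overline{X}_{\Gamma_{n+1}}=x+\gamma\,b(x,z)+\sqrt{\gamma}\,\sigma(x,z)\,U$ with $\gamma:=\gamma_{n+1}$ and $U\sim\mathcal{N}(0,I_d)$ independent of $(\overline{X}_{\Gamma_n},\zeta_{\Gamma_n})$. By (\ref{hyp:Lyapunov_control_MS_unif_expo}) we may write $V(\cdot):=V(\cdot,1)$, so the increment $\Delta:=\exp(\tfrac{\lambda}{s}V^p(\overline{X}_{\Gamma_{n+1}}))-\exp(\tfrac{\lambda}{s}V^p(\overline{X}_{\Gamma_n}))$ does not involve $\zeta_{\Gamma_{n+1}}$ and depends on $\overline{X}_{\Gamma_{n+1}}$ only; the cases $\lambda=0$ and $p=0$ give $\Delta=0$, so assume $\lambda,p>0$. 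Writing $y:=\overline{X}_{\Gamma_{n+1}}$ and using $|e^a-e^b|\leqslant|a-b|\,e^{a\vee b}$,
\[
|\Delta|\ \leqslant\ \tfrac{\lambda}{s}\,\big|V^p(y)-V^p(x)\big|\,\exp\!\big(\tfrac{\lambda}{s}(V^p(x)\vee V^p(y))\big).
\]

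First I would bound the increment of $V^p$. From $|\nabla V|^2\leqslant C_V V$ in (\ref{hyp:Lyapunov_control_MS}) the map $\sqrt{V}$ is Lipschitz, hence $|\sqrt{V(y)}-\sqrt{V(x)}|\leqslant C|y-x|$; from $\mathfrak{B}(\phi)$ (see (\ref{hyp:controle_coefficients_MS})), $|b(x,z)|\vee\sqrt{\Tr[\sigma\sigma^{\ast}(x,z)]}\leqslant C\sqrt{\phi\circ V(x)}$, so $|y-x|\leqslant C\sqrt{\gamma}\,\sqrt{\phi\circ V(x)}\,(1+|U|)$. Applying the elementary inequality (\ref{eq:puisssance_sup_1}) with exponent $2p$ when $p\geqslant\tfrac12$ (and, when $p<\tfrac12$, concavity of $t\mapsto t^{2p}$ together with $V\geqslant v_{\ast}$) to $u=\sqrt{V(y)}$, $v=\sqrt{V(x)}$ gives $|V^p(y)-V^p(x)|\leqslant C\big(V^{(p-1/2)_+}(x)\,|y-x|+|y-x|^{2p\vee 1}\big)$; inserting the bound for $|y-x|$, using $\phi\circ V\leqslant C_\phi V$ and $\gamma\leqslant\overline{\gamma}$, I obtain $|V^p(y)-V^p(x)|\leqslant C\,\gamma^{\rho(p\wedge1/2)/\rho}\,\Phi(x)\,(1+|U|)^2$, i.e.\ $|V^p(y)-V^p(x)|\leqslant C\,\gamma^{p\wedge1/2}\,\Phi(x)\,(1+|U|)^2$, where $\Phi(x)$ is a finite sum of terms $V^{a}(x)(\phi\circ V(x))^{c}$ with $a\in[0,\tfrac12]$ and $c\in[p\wedge\tfrac12,1]$. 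Next, since $\rho<s$, I would split the $\rho$-th moment by Hölder with conjugate exponents $q':=s/\rho$ and $q:=s/(s-\rho)$:
\[
\mathbb{E}\big[|\Delta|^{\rho}\,\big|\,\overline{X}_{\Gamma_n},\zeta_{\Gamma_n}\big]\ \leqslant\ C\lambda^{\rho}\;\mathbb{E}\big[|V^p(y)-V^p(x)|^{\rho q}\,\big|\,x,z\big]^{1/q}\;\mathbb{E}\big[e^{\lambda(V^p(x)\vee V^p(y))}\,\big|\,x,z\big]^{\rho/s}.
\]
For the last factor, $e^{\lambda(V^p(x)\vee V^p(y))}\leqslant e^{\lambda V^p(x)}+e^{\lambda V^p(y)}$, and (\ref{hyp:incr_lyapunov_X_MS_expo}) applied with $\tilde{\lambda}=\lambda$ gives $\mathbb{E}[e^{\lambda V^p(y)}\mid x,z]\leqslant Ce^{\lambda V^p(x)}$, whence this factor is $\leqslant Ce^{\rho\lambda V^p(x)/s}$; the choice $q'=s/\rho$ is precisely what allows (\ref{hyp:incr_lyapunov_X_MS_expo}) to be used with the full parameter $\lambda$. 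For the first factor, the previous step and the finiteness of all Gaussian moments of $U$ yield $\mathbb{E}[|V^p(y)-V^p(x)|^{\rho q}\mid x,z]^{1/q}\leqslant C\,\gamma^{\rho(p\wedge1/2)}\,\Phi(x)^{\rho}$. Hence $\mathbb{E}[|\Delta|^{\rho}\mid\overline{X}_{\Gamma_n},\zeta_{\Gamma_n}]\leqslant C\,\gamma^{\rho(p\wedge1/2)}\,\Phi(x)^{\rho}\,e^{\rho\lambda V^p(x)/s}$.

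It then remains to absorb the polynomial prefactor. Using $\phi\circ V\leqslant C_\phi V$ and $\inf_{[v_{\ast},\infty)}\phi>0$ one checks $\Phi(x)^{\rho}\leqslant C\,\phi\circ V(x)\,V^{m}(x)$ for some $m\geqslant 0$, so the desired bound $\Phi(x)^{\rho}e^{\rho\lambda V^p(x)/s}\leqslant C\,\dfrac{\phi\circ V(x)}{V(x)}\,e^{\lambda V^p(x)}$ reduces to $V^{m+1}(x)\leqslant C\,e^{\lambda(1-\rho/s)V^p(x)}$, which holds on $[v_{\ast},\infty)$ because $m+1>0$, $\lambda(1-\rho/s)>0$ and $p>0$ (the exponential of a positive power of $V$ dominates any power of $V$). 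This gives (\ref{eq:incr_lyapunov_X_MS_f_tens_expo}), i.e.\ $\mathcal{GC}_{Q}(\exp(\tfrac{\lambda}{s}V^p),\,V^{-1}\phi\circ V\exp(\lambda V^p),\rho,\epsilon_{\mathcal{I}})$ (see (\ref{hyp:incr_X_Lyapunov})) with $\epsilon_{\mathcal{I}}(\gamma)=\gamma^{\rho(p\wedge1/2)}$, the centering $\exp(\tfrac{\lambda}{s}V^p(\overline{X}_{\Gamma_n}))$ being admissible by Remark \ref{rmrk:Accroiss_mes}.

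The hard part will be the calibration in the last two steps: one must choose the Hölder exponent so that the exponential–moment hypothesis (\ref{hyp:incr_lyapunov_X_MS_expo}) can be invoked with $\tilde{\lambda}$ equal to $\lambda$ itself — which is exactly why the condition $\rho<s$ is imposed — and then recognise that the surviving exponent $\tfrac{\rho}{s}\lambda V^p(x)$ leaves a strictly positive budget $\lambda(1-\tfrac{\rho}{s})V^p(x)$ in the exponential, and that this budget is precisely what swallows the polynomial-in-$V$ factors accumulated along the way (from $\nabla V$, $D^2V$, the weight $V^{p-1/2}$ in the Taylor expansion, and the conversions between fractional and integer powers of $\phi\circ V$). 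Without this positive budget the estimate would fail for $p$ close to $1$, which is why the result is stated conditionally on (\ref{hyp:incr_lyapunov_X_MS_expo}) rather than derived from scratch.
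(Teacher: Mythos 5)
Your proposal is correct and follows essentially the same route as the paper's proof: bound the exponential increment by $\frac{\lambda}{s}|V^p(y)-V^p(x)|$ times an exponential factor, control $|V^p(y)-V^p(x)|$ via the Lipschitz property of $\sqrt{V}$ (inequality (\ref{eq:puisssance_sup_1}) for $p\geqslant 1/2$, the H\"older property of $V^p$ for $p<1/2$) together with $\mathfrak{B}(\phi)$, then exploit $\rho<s$ through a H\"older inequality so that hypothesis (\ref{hyp:incr_lyapunov_X_MS_expo}) applies with parameter at most $\lambda$, and finally absorb the polynomial-in-$V$ prefactors into the leftover exponential budget $\lambda(1-\rho/s)V^p$. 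The only differences are cosmetic — you apply one global H\"older step with exponents $(s/\rho,\,s/(s-\rho))$ and use $e^{a\vee b}$ where the paper splits into the terms $e^{\lambda\rho V^p(x)/s}$ and $e^{\lambda\rho V^p(y)/s}$ and applies H\"older with an exponent $\theta\in(1,s/\rho]$ only to the latter — and your appeal to Remark \ref{rmrk:Accroiss_mes} for the centering matches the paper's implicit use of it.
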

%
%
%
%
%
%
%
%
%
%
%
\begin{proof}
When $p=0$ the result is straightforward. We begin by noticing that $\mathfrak{B}(\phi)$ (see (\ref{hyp:controle_coefficients_MS})) implies that for every $n \in \mathbb{N}$,
\begin{align*}
 \vert \overline{X}_{\Gamma_{n+1}} -\overline{X}_{\Gamma_n} \vert\leqslant C  \gamma_n^{1/2}  \sqrt{\phi \circ V (\overline{X}_{\Gamma_n})}(1+\vert U_{n+1} \vert^2 ).
\end{align*} 
Let $x,y \in \mathbb{R}^d$. From Taylor expansion at order one, we derive,
\begin{align}
\label{eq:preuve_growth_control_expo_decomp_MS}
\big\vert\exp(\lambda/s & V^p(y))-\exp(\lambda/s V^p(x)) \big\vert \nonumber \\
&\leqslant  \frac{\lambda}{s}\big(\exp(\lambda/s V^p(y))+\exp(\lambda/s V^p(x))\big) \big\vert  V^p(y) -V^p(x) \big\vert .
\end{align}
First, let $p \in[1/2,1]$ we use (\ref{eq:puisssance_sup_1}) with $\alpha=2p$ and since $\sqrt{V}$ is Lipschitz, we obtain
\begin{align*}
\vert V^{p}( y )-V^{p}(x)\vert \leqslant & 2^{2p}p(V^{p-1/2}(x)   \vert \sqrt{V}( y )-\sqrt{V}(x)\vert+\vert \sqrt{V}( y )-\sqrt{V}(x)\vert^{2p})\\
 \leqslant & 2^{2p}p(V^{p-1/2}(x)  [\sqrt{V}]_1 \vert y-x\vert+[\sqrt{V}]_1^{2p}\vert y-x\vert^{2p}).
\end{align*}
When $p \in [0,1/2 ]$. We notice that from (\ref{hyp:Lyapunov_control_MS}), the function $V^p$ is $\alpha$-H\"older for every $\alpha \in [2p,1]$ (see Lemma 3. in \cite{Panloup_2008}) and then $V^{p}$ is $2p$-H\"older that is
\begin{align*}
\vert V^{p}( y )-V^{p}(x)\vert \leqslant & [\sqrt{V}]_{2p}   \vert y-x \vert^{2p}.
\end{align*}
We focus on the case $p \in[1/2,1]$. When $p\leqslant 1/2$ the proof is similar and left to the reader. Using (\ref{eq:preuve_growth_control_expo_decomp_MS}), we derive from the H\"older inequality that
\begin{align*}
\mathbb{E}\big[\vert \exp( & \lambda/s V^p(\overline{X}_{\Gamma_{n+1}}))- \exp(\lambda/s V^p(\overline{X}_{\Gamma_{n}}))  \vert^{\rho} \vert \overline{X}_{\Gamma_n}, \zeta_{\Gamma_n}\big] \\
\leqslant & C \exp(\lambda \rho /s V^p(\overline{X}_{\Gamma_{n}})) \Big( V^{p \rho-\rho/2}(\overline{X}_{\Gamma_{n}}) \mathbb{E}\big[ \vert \overline{X}_{\Gamma_{n+1}} -\overline{X}_{\Gamma_n} \vert^{\rho} \vert \overline{X}_{\Gamma_n}, \zeta_{\Gamma_n}\big] \\
& \qquad \qquad \qquad \qquad \qquad \qquad \qquad+ \mathbb{E}\big[ \vert \overline{X}_{\Gamma_{n+1}} -\overline{X}_{\Gamma_n} \vert^{2p\rho} \vert \overline{X}_{\Gamma_n}, \zeta_{\Gamma_n}\big] \Big)\\
&+ C \mathbb{E}\Big[  \exp(\lambda \rho /s V^p(\overline{X}_{\Gamma_{n+1}})) \big( V^{p \rho-\rho/2}(\overline{X}_{\Gamma_{n}})  \vert \overline{X}_{\Gamma_{n+1}} -\overline{X}_{\Gamma_n} \vert^{\rho} \\
& \qquad \qquad \qquad \qquad  \qquad \qquad \qquad + \vert \overline{X}_{\Gamma_{n+1}} -\overline{X}_{\Gamma_n} \vert^{2p\rho}\big) \Big\vert \overline{X}_{\Gamma_n}, \zeta_{\Gamma_n}\Big] \\
\leqslant &  C \exp(\lambda \rho /s V^p(\overline{X}_{\Gamma_{n}})) \Big( V^{p \rho-\rho/2}(\overline{X}_{\Gamma_{n}}) \mathbb{E}\big[ \vert \overline{X}_{\Gamma_{n+1}} -\overline{X}_{\Gamma_n} \vert^{\rho} \vert \overline{X}_{\Gamma_n}, \zeta_{\Gamma_n}\big]  \\
& \qquad \qquad \qquad \qquad \qquad \qquad \qquad + \mathbb{E}\big[ \vert \overline{X}_{\Gamma_{n+1}} -\overline{X}_{\Gamma_n} \vert^{2p\rho} \vert \overline{X}_{\Gamma_n}, \zeta_{\Gamma_n}\big] \Big)\\
&+ CV^{p \rho-\rho/2}(\overline{X}_{\Gamma_{n}}) \mathbb{E}\big[  \exp(\lambda \rho \theta /s V^p(\overline{X}_{\Gamma_{n+1}}))  \vert \overline{X}_{\Gamma_n}, \zeta_{\Gamma_n}]^{1/\theta} \\
& \qquad \qquad \qquad \qquad \qquad \times \mathbb{E}[ \vert \overline{X}_{\Gamma_{n+1}} -\overline{X}_{\Gamma_n} \vert^{\rho \theta/(\theta-1)} \vert \overline{X}_{\Gamma_n}, \zeta_{\Gamma_n}\big]^{(\theta-1)/\theta} \\
&+ C  \mathbb{E}\big[  \exp(\lambda \rho \theta /s V^p(\overline{X}_{\Gamma_{n+1}}))  \vert \overline{X}_{\Gamma_n}, \zeta_{\Gamma_n}]^{1/\theta}  \\
& \qquad \qquad \qquad \qquad \qquad \times  \mathbb{E}[ \vert \overline{X}_{\Gamma_{n+1}} -\overline{X}_{\Gamma_n} \vert^{2p \rho \theta/(\theta-1)} \vert \overline{X}_{\Gamma_n}, \zeta_{\Gamma_n}\big]^{(\theta-1)/\theta},
\end{align*}
for every $\theta>1$. From (\ref{hyp:incr_lyapunov_X_MS_expo}) and since $\rho<s$, we take $\theta \in (1,\rho/s]$ and we get
\begin{align*}
\mathbb{E}\big[  \exp(\lambda \rho \theta /s V^p(\overline{X}_{\Gamma_{n+1}})  \vert \overline{X}_{\Gamma_n}, \zeta_{\Gamma_{n}}\big] \leqslant & C \exp( \lambda \theta \rho  /s V^p(\overline{X}_{\Gamma_{n}}, \zeta_{\Gamma_n}))   .
\end{align*}
Rearranging the terms and  since $\rho < s$, we conclude from $\mathfrak{B}(\phi)$ (see (\ref{hyp:controle_coefficients_MS})) that
\begin{align*}
\mathbb{E}[\vert \exp(   \lambda/s V^p(\overline{X}_{\Gamma_{n+1}}))- & \exp(\lambda/s V^p(\overline{X}_{\Gamma_{n+1}}))  \vert^{\rho} \vert \overline{X}_{\Gamma_{n+1}}, \zeta_{\Gamma_n}]   \\
\leqslant & C   \exp(\lambda \rho /s V^p(\overline{X}_{\Gamma_{n}}))   \big( \gamma_n^{\rho/2} V^{p \rho-\rho/2}(\overline{X}_{\Gamma_{n}}) \vert \phi \circ V  (\overline{X}_{\Gamma_{n}} ) \vert^{\rho/2} \\
& \qquad \qquad \qquad \qquad \qquad \qquad \qquad + \gamma_n^{p\rho} \vert \phi \circ V  (\overline{X}_{\Gamma_{n}} ) \vert^{p \rho} \big)\\
 \leqslant & C  \gamma_n^{\rho/2} \frac{ \phi \circ V  (\overline{X}_{\Gamma_{n}} )}{V(\overline{X}_{\Gamma_{n}})}  \exp(\lambda V^p(\overline{X}_{\Gamma_{n}}))   ,
\end{align*}
and the proof of (\ref{eq:preuve_growth_control_expo_decomp_MS}) is completed.
\end{proof}
\subsubsection{Proof of Theorem \ref{th:cv_was_MS}}

This result follows from Theorem \ref{th:tightness} and Theorem \ref{th:identification_limit}. The proof consists in showing that the assumptions from those theorems are satisfied.

\paragraph{Step 1. Mean reverting recursive control}
First, we show that $\mathcal{RC}_{Q,V}(\psi_p,\phi,p\tilde{\alpha},p\beta)$ (see (\ref{hyp:incr_sg_Lyapunov})) is satisfied for every $\tilde{\alpha} \in (0,\alpha)$.\\

 Since (\ref{hyp:Lyapunov_control_MS}), $\mathfrak{B}(\phi)$ (see (\ref{hyp:controle_coefficients_MS})) and $\mathcal{R}_p(\alpha,\beta,\phi,V)$ (see (\ref{hyp:recursive_control_param_MS})) hold, it follows from Proposition \ref{prop:recursive_control_MS} that $\mathcal{RC}_{Q,V}(\psi_p,\phi,p\tilde{\alpha},p\beta)$ (see (\ref{hyp:incr_sg_Lyapunov})) is satisfied for every $\tilde{\alpha} \in (0,\alpha)$ since $\liminf_{y \to + \infty} \phi(y) > \beta / \tilde{\alpha}$

\paragraph{Step 2. Step weight assumption} 
Now, we show that $\mathcal{S}\mathcal{W}_{\mathcal{I}, \gamma,\eta}(V^{p+a-1} ,\rho,\epsilon_{\mathcal{I}}) $ (see (\ref{hyp:step_weight_I_gen_chow})) and $\mathcal{S}\mathcal{W}_{\mathcal{II},\gamma,\eta}(V^{p+a-1}) $ (see (\ref{hyp:step_weight_I_gen_tens})) hold. \\

First we recall that $\mathcal{RC}_{Q,V}(\psi_p,\phi,p\tilde{\alpha},p\beta)$ (see (\ref{hyp:incr_sg_Lyapunov})) is satisfied for every $\tilde{\alpha} \in (0,\alpha)$. Then, using $\mathcal{S}\mathcal{W}_{\mathcal{I}, \gamma,\eta}(\rho, \epsilon_{\mathcal{I}})$ (see (\ref{hyp:step_weight_I})) with Lemma \ref{lemme:mom_V} gives $\mathcal{S}\mathcal{W}_{\mathcal{I}, \gamma,\eta}( V^{p+a-1},\rho,\epsilon_{\mathcal{I}}) $ (see (\ref{hyp:step_weight_I_gen_chow})). Similarly, $\mathcal{S}\mathcal{W}_{\mathcal{II},\gamma,\eta}(V^{p+a-1}) $ (see (\ref{hyp:step_weight_I_gen_tens}) follows from $\mathcal{S}\mathcal{W}_{\mathcal{II},\gamma,\eta} $ (see (\ref{hyp:step_weight_II})) and  Lemma \ref{lemme:mom_V}. 

\paragraph{Step 3. Growth control assumption}
Now, we prove $\mathcal{GC}_{Q}(F,V^{a +p-1},\rho,\epsilon_{\mathcal{I}}) $ (see (\ref{hyp:incr_X_Lyapunov})) for $F= \DomA_0$ and $F=\{V^{p/s}\}$ .\\

This is a consequence of Lemma \ref{lemme:incr_lyapunov_X_MS}. We notice that $\rho\leqslant 2p$ and $ \rho/s \leqslant 1$. Consequently $M_{(\rho/2)\vee(p\rho /s) }(U)$ (see (\ref{hyp:moment_ordre_p_va_schema_MS})) follows from $M_{p}(U)$.  Now, we notice that Lemma \ref{lemme:incr_lyapunov_X_MS} and the fact that under $\mathfrak{B}(\phi)$ (see (\ref{hyp:controle_coefficients_MS})) and $p \geqslant 1$, we have $\Tr[ \sigma \sigma^{\ast} ] \leqslant C V^{p+a-1} $, imply that for $F= \DomA_0$ and $F=\{V^{p/s}\}$, then $\mathcal{GC}_{Q}(F,V^{a +p-1},\rho,\epsilon_{\mathcal{I}}) $ (see (\ref{hyp:incr_X_Lyapunov})) holds

\paragraph{Step 4. Conclusion}
\begin{enumerate}[label=\textbf{\roman*.}]
\item
The first part of Theorem \ref{th:cv_was_MS} (see (\ref{eq:tightness_MS})) is a consequence of Theorem \ref{th:tightness}. Let us observe that assumptions from Theorem \ref{th:tightness} indeed hold. \\

On the one hand, we observe that from Step 2. and Step 3. the assumptions $\mathcal{GC}_{Q}(V^{p/s},V^{a +p-1},\rho,\epsilon_{\mathcal{I}}) $ (see (\ref{hyp:incr_X_Lyapunov})), $\mathcal{S}\mathcal{W}_{\mathcal{I}, \gamma,\eta}( V^{p+a-1},\rho,\epsilon_{\mathcal{I}}) $ (see (\ref{hyp:step_weight_I_gen_chow})) and $\mathcal{S}\mathcal{W}_{\mathcal{II},\gamma,\eta}(V^{p+a-1}) $ (see (\ref{hyp:step_weight_I_gen_tens})) hold which are the hypothesis from Theorem \ref{th:tightness} point \ref{th:tightness_point_A} with $g=V^{p+a-1}$.\\

On the other hand, form Step 1. the assumption$\mathcal{RC}_{Q,V}(\psi_p,\phi,p\tilde{\alpha},p\beta)$ (see (\ref{hyp:incr_sg_Lyapunov})) is satisfied for every $\tilde{\alpha} \in (0,\alpha)$. Moreover, since $\mbox{L}_{V}$ (see (\ref{hyp:Lyapunov})) holds and  that $p/s+a-1>0$, then the hypothesis  from Theorem \ref{th:tightness} point \ref{th:tightness_point_B} are satisfied. \\

We thus conclude from Theorem \ref{th:tightness} that $(\nu_n^{\eta})_{n \in \mathbb{N}^{\ast}}$ (built with $(\overline{X}_t)_{t \geqslant 0}$ defined in (\ref{eq:MS_scheme})) is $\mathbb{P}-a.s.$ tight and (\ref{eq:tightness_MS}) holds which concludes the proof of the first part of Theorem \ref{th:cv_was_MS}. \\

\item Let us now prove the second part of Theorem \ref{th:cv_was_MS} (see (\ref{eq:cv_was_MS})) which is a consequence of Theorem \ref{th:identification_limit}.\\

On the one hand,we observe that from Step 2. and Step 3. the assumptions $\mathcal{GC}_{Q}(\DomA_0,V^{a +p-1},\rho,\epsilon_{\mathcal{I}}) $ (see (\ref{hyp:incr_X_Lyapunov})) and $\mathcal{S}\mathcal{W}_{\mathcal{I}, \gamma,\eta}( V^{p+a-1},\rho,\epsilon_{\mathcal{I}}) $ (see (\ref{hyp:step_weight_I_gen_chow})) hold which are the hypothesis from Theorem \ref{th:identification_limit} point \ref{th:identification_limit_A} with $g=V^{p+a-1}$.\\

On the other hand, since $z \in \{1,\ldots,M_0\}$, $b(.,z)$ and $\sigma(.,z)$ have sublinear growth and $\Tr[\sigma \sigma^{\ast}]\leqslant C  V^{p/s+a-1}$, so that $\mathbb{P} \mbox{-a.s.} \;\sup_{n \in \mathbb{N}^{\ast}} \nu_n^{\eta}( \Tr[\sigma \sigma^{\ast}] ) < + \infty $,  it follows from Proposition \ref{prop:MS_infinitesimal_approx} that $\mathcal{E}(\widetilde{A},A,\DomA_0) $ (see (\ref{hyp:erreur_tems_cours_fonction_test_reg})) is satisfied. Then, the hypothesis from Theorem \ref{th:identification_limit} point \ref{th:identification_limit_B} hold and (\ref{eq:cv_was_MS}) follows from (\ref{eq:test_function_gen_cv}).

\end{enumerate}

\subsubsection{Proof of Theorem \ref{th:cv_exp_MS}}

This result follows from Theorem \ref{th:tightness} and Theorem \ref{th:identification_limit}. The proof consists in showing that the assumptions from those theorems are satisfied.

\paragraph{Step 1. Mean reverting recursive control}

First, we show that for every $\tilde{\alpha} \in (0,\alpha)$, there exists $\tilde{\beta} \in \mathbb{R}_+$ such that $\mathcal{RC}_{Q,V}(\tilde{\psi},\phi,p\tilde{\alpha},p\tilde{\beta})$ (see (\ref{hyp:incr_sg_Lyapunov})) is satisfied for every function $\tilde{\psi}: [v_{\ast},\infty) \to \mathbb{R}_+$ such that $\tilde{\psi}(y)=  \exp( \tilde{\lambda} V^p)$ with $\tilde{\lambda} \leqslant \lambda$. Notice that this property and the fact that $\phi$ has sublinear growth imply (\ref{hyp:incr_lyapunov_X_MS_expo}).\\

We begin by noticing that $\mathcal{R}_{p, \lambda}(\alpha,\beta,\phi,V)$ (see (\ref{hyp:recursive_control_param_MS_expo})) implies $\mathcal{R}_{p,\tilde{ \lambda}}(\alpha,\beta,\phi,V)$ for every $\tilde{\lambda} \leqslant \lambda$. Since (\ref{hyp:Lyapunov_control_MS}), $\mathfrak{B}(\phi)$ (see (\ref{hyp:controle_coefficients_MS})), $\mathcal{R}_{p, \lambda}(\alpha,\beta,\phi,V)$  (see (\ref{hyp:recursive_control_param_MS_expo})) and (\ref{hyp:dom_recurs_MS}) hold, it follows from Proposition \ref{prop:recursive_control_MS_exp} with $\lim_{y \to +\infty} \phi(y)=+\infty$, that that for every $\tilde{\alpha} \in (0,\alpha)$, there exists $\tilde{\beta} \in \mathbb{R}_+$ such that $\mathcal{RC}_{Q,V}(\tilde{\psi},\phi,p\tilde{\alpha},p\tilde{\beta})$ (see (\ref{hyp:incr_sg_Lyapunov})) is satisfied for every function $\tilde{\psi}: [v_{\ast},\infty) \to \mathbb{R}_+$ such that $\tilde{\psi}(y)=  \exp( \tilde{\lambda} V^p)$ with $\tilde{\lambda} \leqslant \lambda$. 

\paragraph{Step 2. Step weight assumption} 
Now, we show that $\mathcal{S}\mathcal{W}_{\mathcal{I}, \gamma,\eta}(V^{-1}. \phi \circ V  .\exp(\lambda  V^{p}) ,\rho,\tilde{\epsilon}_{\mathcal{I}}) $, $\mathcal{S}\mathcal{W}_{\mathcal{I}, \gamma,\eta}(V^{-1}. \phi \circ V  .\exp(\lambda  V^{p}) ,\rho,\epsilon_{\mathcal{I}}) $ (see (\ref{hyp:step_weight_I_gen_chow})) and $\mathcal{S}\mathcal{W}_{\mathcal{II},\gamma,\eta}(\exp(\lambda /s V^{p})) $ (see (\ref{hyp:step_weight_I_gen_tens})) hold. \\

First we recall that that there exists $\tilde{\alpha} \in (0,\alpha)$ and $\tilde{\beta} \in \mathbb{R}_+$ such that $\mathcal{RC}_{Q,V}(\psi,\phi,\tilde{\alpha},\tilde{\beta})$ (see (\ref{hyp:incr_sg_Lyapunov})) is satisfied. Then, using $\mathcal{S}\mathcal{W}_{\mathcal{I}, \gamma,\eta}(\rho, \tilde{\epsilon}_{\mathcal{I}})$ and $\mathcal{S}\mathcal{W}_{\mathcal{I}, \gamma,\eta}(\rho, \epsilon_{\mathcal{I}})$ (see (\ref{hyp:step_weight_I})) with Lemma \ref{lemme:mom_V} gives $\mathcal{S}\mathcal{W}_{\mathcal{I}, \gamma,\eta}( V^{-1}. \phi \circ V  .\exp(\lambda  V^{p}),\rho,\tilde{\epsilon}_{\mathcal{I}}) $ and $\mathcal{S}\mathcal{W}_{\mathcal{I}, \gamma,\eta}( V^{-1}. \phi \circ V  .\exp(\lambda  V^{p}),\rho,\epsilon_{\mathcal{I}}) $ (see (\ref{hyp:step_weight_I_gen_chow})). Similarly, $\mathcal{S}\mathcal{W}_{\mathcal{II},\gamma,\eta}(V^{-1}. \phi \circ V  .\exp(\lambda  V^{p})) $ (see (\ref{hyp:step_weight_I_gen_tens}) follows from $\mathcal{S}\mathcal{W}_{\mathcal{II},\gamma,\eta} $ (see (\ref{hyp:step_weight_II})) and  Lemma \ref{lemme:mom_V}. 

\paragraph{Step 3. Growth control assumption}
Now, we prove $\mathcal{GC}_{Q}(F,V^{-1}. \phi \circ V  .\exp(\lambda  V^{p}),\rho,\epsilon_{\mathcal{I}}) $ (see (\ref{hyp:incr_X_Lyapunov})) for $F= \DomA_0$ and $F=\{\exp(\lambda /s V^{p})\}$ .\\

This is a consequence of Lemma \ref{lemme:incr_lyapunov_X_MS} and Lemma \ref{lemme:incr_lyapunov_X_MS_expo}. We notice indeed that $\mathfrak{B}(\phi)$ (see (\ref{hyp:controle_coefficients_MS})) gives $\Tr[ \sigma \sigma^{\ast} ]^{\rho/2}  \leqslant (\phi \circ V)^{ \rho} $. Moreover, we have already shown that (\ref{hyp:incr_lyapunov_X_MS_expo}) is satisfied in Step 1. These observations combined with (\ref{eq:incr_lyapunov_X_MS_f_tens_expo}) imply that $\mathcal{GC}_{Q}(\DomA_0,V^{-1}  \phi \circ V \exp(\lambda V^p) ,\rho,\epsilon_{\mathcal{I}}) $  and $\mathcal{GC}_{Q}(\exp(\lambda /s V^{p}),V^{-1}.  \phi \circ V . \exp(\lambda V^p) ,\rho,\tilde{\epsilon}_{\mathcal{I}}) $ (see (\ref{hyp:incr_X_Lyapunov})) hold.

\paragraph{Step 4. Conclusion}
\begin{enumerate}[label=\textbf{\roman*.}]
\item
The first part of Theorem \ref{th:cv_exp_MS} (see (\ref{eq:tightness_MS_expo})) is a consequence of Theorem \ref{th:tightness}. Let us observe that assumptions from Theorem \ref{th:tightness} indeed hold. \\

On the one hand, we observe that from Step 2. and Step 3. the assumptions $\mathcal{GC}_{Q}(\exp(\lambda /s V^{p}),V^{-1}  \phi \circ V \exp(\lambda V^p) ,\rho,\tilde{\epsilon}_{\mathcal{I}}) $ (see (\ref{hyp:incr_X_Lyapunov})), $\mathcal{S}\mathcal{W}_{\mathcal{I}, \gamma,\eta}( V^{-1}  \phi \circ V \exp(\lambda V^p) ,\rho,\tilde{\epsilon}_{\mathcal{I}}) $ (see (\ref{hyp:step_weight_I_gen_chow})) and $\mathcal{S}\mathcal{W}_{\mathcal{II},\gamma,\eta}( V^{-1}  \phi \circ V \exp(\lambda V^p)) $ (see (\ref{hyp:step_weight_I_gen_tens})) hold which are the hypothesis from Theorem \ref{th:tightness} point \ref{th:tightness_point_A} with $g=V^{-1}  \phi \circ V \exp(\lambda V^p)$.\\

On the other hand, form Step 1. for every $\tilde{\alpha} \in (0,\alpha)$, there exists $\tilde{\beta} \in \mathbb{R}_+$ such that $\mathcal{RC}_{Q,V}(\psi,\phi,p\tilde{\alpha},p\tilde{\beta})$ (see (\ref{hyp:incr_sg_Lyapunov})) is satisfied. Moreover, since $\mbox{L}_{V}$ (see (\ref{hyp:Lyapunov})) holds, then the hypothesis  from Theorem \ref{th:tightness} point \ref{th:tightness_point_B} are satisfied. \\

We thus conclude from Theorem \ref{th:tightness} that $(\nu_n^{\eta})_{n \in \mathbb{N}^{\ast}}$ (built with $(\overline{X}_t)_{t \geqslant 0}$ defined in (\ref{eq:MS_scheme})) is $\mathbb{P}-a.s.$ tight and (\ref{eq:tightness_MS_expo}) holds
which concludes the proof of the first part of Theorem \ref{th:cv_exp_MS}. \\

\item Let us now prove the second part of Theorem \ref{th:cv_exp_MS} (see (\ref{eq:cv_expo_MS})) which is a consequence of Theorem \ref{th:identification_limit}.\\

On the one hand,we observe that from Step 2. and Step 3. the assumptions $\mathcal{GC}_{Q}(\DomA_0,V^{-1}  \phi \circ V \exp(\lambda V^p),\rho,\epsilon_{\mathcal{I}}) $ (see (\ref{hyp:incr_X_Lyapunov})) and $\mathcal{S}\mathcal{W}_{\mathcal{I}, \gamma,\eta}( V^{-1}  \phi \circ V \exp(\lambda V^p),\rho,\epsilon_{\mathcal{I}}) $ (see (\ref{hyp:step_weight_I_gen_chow})) hold which are the hypothesis from Theorem \ref{th:identification_limit} point \ref{th:identification_limit_A} with $g=V^{-1}  \phi \circ V \exp(\lambda V^p)$.\\

On the other hand, since $z \in \{1,\ldots,M_0\}$, $b(.,z)$ and $\sigma(.,z)$ have sublinear growth and $\Tr[\sigma \sigma^{\ast}]\leqslant C V^{-1}  \phi \circ V \exp(\lambda/s V^p)$, so that $\mathbb{P} \mbox{-a.s.} \;\sup_{n \in \mathbb{N}^{\ast}} \nu_n^{\eta}( \Tr[\sigma \sigma^{\ast}] ) < + \infty $,  it follows from Proposition \ref{prop:MS_infinitesimal_approx} that $\mathcal{E}(\widetilde{A},A,\DomA_0) $ (see (\ref{hyp:erreur_tems_cours_fonction_test_reg})) is satisfied. Then, the hypothesis from Theorem \ref{th:identification_limit} point \ref{th:identification_limit_B} hold and (\ref{eq:cv_expo_MS}) follows from (\ref{eq:test_function_gen_cv}).

\end{enumerate}

\bibliography{Biblio_these}
\bibliographystyle{plain}

\end{document}